\documentclass{article}

\usepackage{amsmath, amsthm, amscd, amsfonts, amssymb, graphicx, color}
\usepackage[bookmarksnumbered, colorlinks, plainpages]{hyperref}
\usepackage{arxiv}

\usepackage[utf8]{inputenc} % allow utf-8 input
\usepackage[T1]{fontenc}    % use 8-bit T1 fonts
\usepackage{hyperref}       % hyperlinks
\usepackage{url}            % simple URL typesetting
\usepackage{booktabs}       % professional-quality tables
\usepackage{amsfonts}       % blackboard math symbols
\usepackage{nicefrac}       % compact symbols for 1/2, etc.
\usepackage{microtype}      % microtypography
\usepackage{lipsum}		% Can be removed after putting your text content
\usepackage{graphicx}
\usepackage{natbib}
\usepackage{doi}

\usepackage{lscape,xspace}
\usepackage{amscd,enumerate}
\usepackage{indentfirst}

\usepackage{amssymb}
\usepackage{amsfonts}
\usepackage{latexsym}

\usepackage{scalefnt}
\usepackage{float}
\usepackage{graphicx}
\usepackage{wasysym} 
\usepackage{ragged2e}
\usepackage{inputenc}
\usepackage{xcolor}

\newtheorem{theorem}{Theorem}[section]
\newtheorem{lemma}[theorem]{Lemma}

\theoremstyle{definition}
\newtheorem{definition}[theorem]{Definition}
\newtheorem{example}[theorem]{Example}

\theoremstyle{remark}

\numberwithin{equation}{section}

\newcommand{\SB}{\ensuremath{\mathrm{SB}}}
\newcommand{\PB}{\ensuremath{\mathrm{PB}}}

\title{The metric geometry of paper surfaces under geometric constraints}

\author{ \href{https://orcid.org/0000-0001-6187-2177}{\includegraphics[scale=0.06]{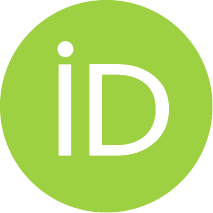}\hspace{1mm}Luciana  M.~Vasconcelos}\thanks{Corresponding author: lucianamvasc@ime.usp.br}\\
Institute of Mathematics and Statistics\\
	Department of Applied Mathematics\\
	 University of São Paulo\\
	São Paulo, SP, 05508-090, Brazil.\\
	\texttt{lucianamvasc@ime.usp.br} \\
}

\date{}

\renewcommand{\headeright}{}
\renewcommand{\undertitle}{}
\renewcommand{\shorttitle}{The metric geometry of paper surfaces}

\hypersetup{
pdftitle={The metric geometry of paper surfaces},
pdfauthor={Luciana M. Vasconcelos, André de Carvalho},
pdfkeywords={Paper surfaces, Ahlfors regularity, Linear local contractibility, Quasisymmetric maps},
}

\begin{document}
\maketitle

\begin{abstract}
	We investigate the quasisymmetric uniformization of a special class of metric surfaces known as paper surfaces, constructed as quotients of planar multipolygons via segment pairings, including infinite Type W identifications. These spaces, which arise naturally in dynamical settings, exhibit conic singularities and complex geometric structure. Our goal is to prove that a broad class of such surfaces satisfies Ahlfors 2-regularity and linear local contractibility, which together ensure the existence of a quasisymmetric parametrization onto the standard 2-sphere.
\end{abstract}

% keywords can be removed
\keywords{Paper surfaces \and Ahlfors regularity \and Linear local contractibility \and Quasisymmetric maps.}

\section{Introduction} 

The problem of uniformization has evolved over time, moving from classical conformal geometry to modern approaches involving metric geometry and quasisymmetric mappings. This field has grown significantly, especially with the study of fractals, hyperbolic groups, and computational tools for geometric structures.

In 1851, Bernhard Riemann showed that any simply connected open set in the complex plane, except the whole plane, can be conformally mapped to the unit disk. This laid the foundation for modern conformal mapping. Later, the uniformization theorem extended this idea, stating that every simply connected Riemann surface is conformally equivalent to the Riemann sphere, the complex plane, or the unit disk.

The field later expanded as researchers applied these concepts to more general settings, developing a rich theory of geometric analysis. The focus shifted to constructing parameterizations, or uniformizations, for metric spaces with specific geometric properties, aiming to classify spaces with similar structures.

In 1954, Lars Ahlfors \cite{ahlfors1954} advanced the field by introducing quasiconformal mappings and conformal invariants, which laid the groundwork for later work on quasisymmetric uniformization. Significant contributions followed, including the results of Semmes \cite{semmes1991chordarc}, David and Semmes \cite{david1993quantitative}, and, in 2002, Mario Bonk and Bruce Kleiner \cite{bonk2002quasisymmetric}, who established a metric version of the uniformization theorem. They showed that any Ahlfors 2-regular, linearly locally contractible metric sphere can be quasisymmetrically mapped to the topological 2-sphere. Subsequent studies \cite{bonk2005conformal, merenkov2013quasisymmetric, wildrick2008quasisymmetric, wildrick2010quasisymmetric} further developed these results by exploring additional geometric conditions. There the Ahlfors regularity condition is combined with varying
geometric conditions on the codomain.

Recent work has examined conditions under which a space can be quasisymmetrically mapped to the topological 2-sphere. Researchers have explored various types of metric surfaces and domains with boundaries. For example, Fitzi and Meier \cite{fitzi2022canonical} proved that surfaces with specific properties can be mapped to model surfaces with the same topology.

We now turn to the problem of paper space uniformization. The study of paper spaces extends these ideas further. Paper spaces are constructed by identifying boundary segments of polygons, resulting in surfaces that are flat except for singular points. De Carvalho and Hall \cite{de2012paper} introduced paper-folding schemes, connecting these constructions with dynamical systems. Their work showed how specific folding patterns yield unique conformal structures.

%André de Carvalho and Toby Hall \cite{de2012paper} introduced the concept of paper-folding schemes, which connect these geometric constructions with dynamical systems. Their research revealed how certain folding patterns lead to unique conformal structures on these surfaces.

For instance, when a genus-two surface is formed by identifying opposite edges of a regular octagon, all eight vertices collapse into a single cone point with a total angle of $6\pi$. More generally, any closed surface of genus $g \geq 1$ can be topologically obtained by identifying the edges of a regular $4g$-gon.

%This construction induces a natural conformal structure. Away from singularities, the structure follows the polygon's Euclidean geometry; at cone points, it can still be described via fractional power coordinate transformations.

%Returning to the general case of paper spaces, there are numerous examples of such structures in mathematics. One of the most familiar cases is the torus, which can be represented as the quotient of a square by identifying pairs of opposite sides. More generally, any closed surface of genus $g\ge 1$ can be topologically constructed as a paper space by identifying the edges of a regular  $4g$-gon appropriately. Other significant examples of paper spaces, particularly relevant in the study of billiards theory, include the so-called translation surfaces, as discussed in \cite{Zor06} and \cite{Ran16}.

%However, the study of paper spaces is interesting in its own right, as their definition includes quotient polygons from other contexts, such as infinite-type translation surfaces, which also hold relevance in Dynamical Systems (see \cite{clavier2019rotational} and references therein).

Paper spaces also have applications beyond pure mathematics. In dynamical systems,  particularly relevant in the study of billiards theory, include the so-called translation surfaces, as discussed in \cite{Zor06} and \cite{Ran16}. Their definition includes quotient polygons from other contexts, such as infinite-type translation surfaces, which also hold relevance in Dynamical Systems %(see 
\cite{clavier2019rotational}. 
%and references therein). 
De Carvalho and Hall \cite{de2014riemann} provided conditions under which the complex structure within a polygon extends uniquely across boundary quotients, producing a closed Riemann surface.

This construction induces a natural conformal structure on the surface in addition to its metric structure. Away from singularities, the conformal structure is directly inherited from the polygon's Euclidean geometry. However, at cone points, this structure can still be recovered by applying fractional power coordinate transformations, allowing for well-defined local holomorphic charts.

%This idea can be further generalized by considering polygons where infinitely many pairs of boundary segments are identified, broadening the class of surfaces that can be studied through this method.  

%In their paper \cite{de2014riemann} André de Carvalho and Toby Hall provide a condition under which the complex structure within the polygon extends uniquely across the boundary's quotient, resulting in a closed Riemann surface. Additionally, they establish a modulus of continuity for a uniformizing map on such surfaces.

%An important remark is that many paper spaces with dynamic interest do not satisfy the bounded curvature condition as defined in \cite{BBI}, which makes it impossible to directly apply existing results to their analysis.

However, challenges remain. Many dynamically relevant paper spaces fail to meet the bounded curvature condition from \cite{BBI}, preventing the direct application of existing results. Paper-folding schemes can involve infinitely many identified boundary segments, complicating the uniformization analysis.

An important case is the so-called tight horseshoe. This space does not satisfy the Ahlfors regularity condition, preventing the application of Bonk-Kleiner's theorem to guarantee a quasisymmetric map to the 2-sphere. However, it is known that the underlying complex structure still extends uniquely to this point, providing a conformal uniformizing map \cite{de2012paper}.

In dynamical systems, the constructions of paper spaces have proven useful in describing pseudo-Anosov transformations \cite{fathi1979travaux}. Paper spaces, while initially studied for their dynamic properties, have since gained recognition in metric geometry due to their structural complexity and potential for uniformization.

A paper-folding scheme can be formally described as an equivalence relation that identifies boundary segments, possibly infinitely many, of a given finite collection of disjoint polygons in the complex plane. The image of the polygon boundary in the resulting quotient space $ S $ is called the scar. This structure contains cone points, where the total angle differs from $ 2\pi $, and singular points, which can appear as accumulation points of cone singularities. These features play a crucial role in understanding the geometric and topological properties of the space, affecting whether it can be quasisymmetrically uniformized.

Quasisymmetric mappings play a key role here. A homeomorphism $f: X \to Y$ is $\eta$-quasisymmetric if there exists a function $\eta: [0,\infty) \to [0,\infty)$ such that

$$
    d_X(x,a) \leq t d_X(x,b) \implies d_Y(f(x),f(a)) \leq \eta(t) d_Y(f(x),f(b))
$$
for all $a, b, x \in X$ and $t > 0$. Such mappings extend conformal ideas to metric spaces without differential structures. 
These mappings were initially explored in metric spaces by Tukia and Väisälä \cite{tukia1980quasisymmetric}, building on the foundational work of Beurling and Ahlfors \cite{beurling1956boundary}, who studied the boundary behavior of quasiconformal maps in two dimensions. Tukia and Väisälä established key properties of quasisymmetric mappings and presented the first uniformization result involving these mappings: a complete characterization of quasisymmetric circles based on intrinsic properties of the metric space.

Quasisymmetric mappings also preserve the doubling and LLC (linearly locally connected) properties, which makes them a natural starting point for uniformization problems. Many well-known spaces, such as Euclidean spaces, balls, and spheres, exhibit these properties. The theory of quasisymmetric mappings between metric spaces has advanced significantly following the foundational work of Tukia and Väisälä.

This research continues the investigation of paper spaces from the perspective of metric geometry, following the principles discussed in \cite{BBI} and \cite{aleksandrov1967intrinsic} and investigates the quasisymmetric uniformization of paper surfaces and presents conditions under which these surfaces can be mapped to the topological 2-sphere. We prove the following main result: 

%espaços de papel são importantes em dinâmica, por um lado, e são espaços métricos especiais aos quais se aplica essa teoria de geometria métrica, por outro. No artigo, prova que certos espaços de papel, com identificações motivadas por dinâmica, são Ahlfors 2-regulares e llc e conclui, pelo teorema \cite{bonk2002quasisymmetric}, que são qs à esfera see sec \ref{sec-surfL}. é important ressaltar queue as tecnicas utilizadas nesse artigo são inconclusive em alguns esc=paços de papel como por exemplo an chamada ferradura justa queue não satisfaz a condição de ser ahfors regular e consequent mente não consequimos utilizar o teorema de Bon-Kleiner para assegurar queue é qs a 2 esfera. Até entao o queue podemos afircar sobre espaços como a ferradura justa é qua  a estrutura complexa subjacente se estende unicamente sobre aquele ponto e, portanto, há uma transformação de uniformização conforme para a esfera \cite{de2012paper}.

%This paper investigates when paper surfaces can be quasisymmetrically uniformized to the topological 2-sphere. It presents conditions that make this possible, highlighting the connection between the geometric properties of the folding schemes and the resulting surfaces.

%Our main goal with this paper is to prove the following:

\medskip

\textbf{Theorem \ref{teo_principal}.}
\textit{The surfaces in the class $ \mathcal{L} $ - paper surfaces constructed from a multipolygon with Type~$W$ identifications or basic segment pairings (see Section \ref{sec-surfL}) - satisfy the following properties:
\begin{itemize}
    \item[(a)] Ahlfors 2-regularity;
    \item[(b)] Linear local contractibility.
\end{itemize}
As a consequence, the surfaces of type $ \mathcal{L}^* $ are quasisymmetrically equivalent to the 2-sphere.}

\medskip

This theorem proves that even surfaces with singularities, like cone points or singular accumulations, can be mapped to the 2-sphere. It also outlines conditions under which such mappings are not possible.

The types of pairings are motivated by dynamics: surfaces associated with unimodal generalized pseudo-Anosov maps can be viewed as metric quotients of certain polygons. In this context, the identifications along the edges of the square are of Type~W identifications, with a distinguished point where the associated sequences decay geometrically.

%This theorem shows that even surfaces with singularities, like conical points or singular accumulations, can be quasisymmetrically mapped to the 2-sphere. It also clarifies when this mapping is not possible, defining the limits of such transformations.

%Understanding the quasisymmetric uniformization of paper surfaces contributes to a better grasp of metric space geometry. The connections between geometric structures and quasisymmetric mappings open doors to further research, with potential applications in conformal geometry, dynamical systems, and mathematical models of paper-folded surfaces.

%Understanding the quasisymmetric uniformization of paper surfaces helps us better understand the geometry of these spaces and how they relate to dynamical systems. For example, in Section \ref{application}, we provide an example of a surface associated with a generalized pseudo-Anosov application, which can be seen as a paper surface, as stated in the previous Theorem.

Therefore, a comprehensive understanding of the quasisymmetric uniformization of paper surfaces yields significant insights into the underlying geometric structures of these spaces and their interplay with dynamical systems. In particular, as demonstrated in the preceding theorem, the surface arising from a generalized pseudo-Anosov map may be interpreted as a paper surface within this framework.

%We also explore extensions to cases where certain rectangular regions are removed, showing that the resulting surfaces maintain Ahlfors regularity and LLC properties, ensuring their continued quasisymmetric equivalence to the 2-sphere (see Section \ref{seclinguetas}).

%Uma pergunta que pode ser feita é see é possivel conculir essa quase simetria para outros espaços de opapel sobre o quadrado mas removendo algumas partes dele  e a resposta é positive. nesse sentido, nesse artigo provaremos queue  ao removermos uma quantidade finita retangulos, com certos tipos de identificações, ver sec \ref{seclinguetas} dos espacos contemplados pelo teorema principal épossivel ainda garantir queue else continual sendo ahfors regular e LLC e consequentemente quase simetricamente equivalent a 2 esfera. 

\section{Background}

This section presents an overview of key theoretical concepts and results that serve as the foundation for the paper, along with references for more detailed discussions.

Section~\ref{sec:quotient_metric} introduces the theory of quotient metric spaces and explores their relationship with topological quotients, establishing the necessary background for subsequent sections. %Section~\ref{sec:geom_not} covers fundamental topological results, including the universal property of quotient topology. 
Additionally, it is shown that a homotopy respecting a given relation induces a homotopy in the quotient space.%, a result that plays a crucial role in Section~\ref{sec: applications}. 

The notion of \textit{paper spaces}, which constitute the central objects of this work, is discussed in Section~\ref{sec:paper_spaces}, along with their fundamental properties and preliminary results. Section~\ref{sec:metric_structure} focuses on the metric structure of these spaces, proving that every paper space can be regarded as a conic flat surface. Finally, Section~\ref{sec:qs_unif} examines metric spaces that are homeomorphic to the sphere, analyzing the conditions under which such spaces can be quasisymmetrically mapped onto the standard Euclidean sphere~$S^2$.

\subsection{Metric spaces: quotient spaces and intrinsic metric }\label{sec:quotient_metric}

In this section, some conventions to be used throughout the text will be introduced, along with a review of fundamental definitions related to metrics. The concepts discussed in this section can be found in greater detail in \cite{BBI} and \cite{de2012paper}, which serves as an excellent reference for further exploration.

A \textit{metric} on a set $ X $ is a function $ d_X : X \times X \to \mathbb{R} \cup \{\infty\} $ that satisfies the usual conditions: it is non-negative, symmetric, and satisfies the triangle inequality. Additionally, the distance between two distinct points is always positive, and the distance is zero if and only if the points are identical.
Given this, a \textit{metric space} consists of a set equipped with a metric. Formally, it is defined as a pair $ (X, d_X) $, where $ d_X $ is a metric on $ X $. The elements of $ X $ are referred to as \textit{points of the metric space}, and $ d_X(x, y) $ represents the \textit{distance} between the points $ x $ and $ y $.

A function $ d_X : X \times X \to \mathbb{R}_+ \cup \{+\infty\} $ is called a \textit{semimetric} or \textit{pseudometric} if it satisfies all the properties of a metric, except that $ d_X(x, y) = 0 $ need not imply $ x = y $, that is, distinct points are allowed to have distance zero. If $ d_X $ is a pseudometric, then $ d_X(x,y) $ represents the \textit{pseudodistance} between the points $ x $ and $ y $. There's a canonical way of turning a pseudometric space $\left(X,d_X\right)$ into a metric space $\left(X/{d_R},d_X\right)$: declare two points to be equivalent if the pseudodistance between them is zero and set the distance between equivalence classes $[x],[y]$ to be the distance between any two elements in the classes, that is, $d_R([x],[y]):=d_R(x,y)$, where the first $d_R$ is the distance on $X/{\sim_R}$ and the second is the pseudometric on $X$.

\textit{Notation. }For a metric or pseudometric space $ (X, d_X) $, the following notation is used. Given a point $ x \in X $ and a radius $ r \geq 0 $, the open and closed balls centered at $ x $ with radius $ r $ are defined as:
$$
B_X(x, r) := \{ y \in X \mid d_X(y, x) < r \},
$$
$$
\overline{B}_X(x, r) := \{ y \in X \mid d_X(y, x) \leq r \}.
$$
When the ambient space is clear from the context, the simplified notation $ B(x, r) $ may be used.

Let $(X, d)$ be a metric space. A \textit{path }in $X$ is defined as a continuous map $\gamma: I\rightarrow X$ where $I$ is the interval $[0,1]$ or $[a,b]\subset \mathbb{R}$. The \textit{length} of $\gamma: [a, b] \to X$ is given by
$$
|\gamma|_X = \sup \left\{ \sum d(\gamma(t_i), \gamma(t_{i+1})) \right\} \in \mathbb{R}_{\geq 0} \cup \{\infty\},
$$
where the supremum is taken over all finite partitions $a = t_0 < t_1 < \cdots < t_k = b$ of the interval $[a, b]$.

This concept of path length leads to the notion of an intrinsic metric. Specifically, a metric is \textit{intrinsic} if the distance between any two points can be arbitrarily well-approximated by the lengths of curves joining the two points. The metric is \textit{strictly intrinsic} if the infimum is attained. That is, for every $x, y \in X$, there exists a path from $x$ to $y$ whose length equals $d(x, y)$. In particular, $d(x, y) = \infty$ if there is no path from $x$ to $y$. It can be shown that a compact intrinsic metric is strictly intrinsic.

If $d$ is not an intrinsic metric, then there is an \textit{induced intrinsic metric} $\hat{d}$ on $X$, defined by
$$
\hat{d}(x, y) = \inf \{ |\gamma|_X  \ | \  \gamma: [a, b] \to X \text{ is a path with } \gamma(a) = x, \, \gamma(b) = y \}.
$$

For example, if $P \subset \mathbb{R}^2$, there is an intrinsic metric $d_P$ on $P$ induced by the Euclidean metric on $\mathbb{R}^2$. However, this metric need not agree with the subspace metric on $P$.

%The following theorem provides a key result to ensure that the metric of a paper space is strictly intrinsic. A more general statement of this result can be found in \cite{BBI}, Theorem 2.5.23.

%%\begin{thm}\label{teo1-1-11}
 %   Every compact intrinsic metric is strictly intrinsic.
%\end{thm}

\noindent

%The following theorem establishes essential properties of intrinsic metrics and length spaces, ensuring that certain metric structures remain consistent under quotient operations. 

The key properties of intrinsic metrics and length spaces ensure that certain metric structures remain consistent under quotient operations. Specifically, one such property is that the metric quotient of a length space is itself a length space.

In simple terms, a \textit{length space} is a metric space whose metric is intrinsic, meaning the distance between any two points can be realized as the length of a suitable path connecting them. More formally, in a length space, the distance function is defined as the infimum of the lengths of all possible paths between two points. This concept ensures that geodesics (shortest paths) can be found within the space, making it a central object of study in metric geometry. %Further details and a comprehensive discussion can be found in \cite{BBI}.

    Let $(X,d)$ be a metric space and $R$ be a symmetric and reflexive relation on $X$. A $R$\textit{-chain} from $x$ to $y$ is a sequence $((p_i,q_i))^k_{i=0}$ in $X^2$ such that $xRp_0, q_iRp_ {i+1}$ for $i = 0 , 1, ..., k - 1$, and $q_kRy$. The \textit{length of the $R$-chain} is given by $$L^R((p_i,q_i)) := \sum^k_{i=0}d(p_i,q_i).$$

%This definition states that to get from $x$ to $y$ with a $R$-chain, it is better to move between sets in $R$ but not within them. It is possible to 
The relation $R$ can be seen as a collection of subsets of $X$ where the relation $xRy$ holds if $x$ and $y$ are in the same set in the collection $R$.

 Collections of subsets can always be associated with corresponding relations.  Given a collection $\mathcal{G}$ of subsets of a metric space $X$, it follows that it will determine a reflective and symmetric relation $R_\mathcal{G}$ on $X$ defined as follows:
$$x R_\mathcal{G} y\Leftrightarrow x \ \text{and} \ y \ \text{are} \ \mathcal{G}  \text{-equivalent},$$
meaning either $x=y$ or there exists  $g\in \mathcal{G}$ with $x,y\in g$.

The function $ d^R : X \times X \rightarrow \mathbb{R}_+ \cup \{+\infty\} $ is defined, for each $ x $ and $ y $ in $ X $, by
$$
d^R(x,y) := \inf\{L^R((p_i,q_i)):((p_i,q_i)) \ \mbox{is an }R\mbox{-chain of} \ x \ \text {to} \ y \}.
$$
Notice that $ d^R $ is a \textit{pseudometric} on $ X $ and is well-defined since there always exists at least one $ R $-chain from $ x $ to $ y $, namely the trivial one.

The equivalence relation on $X$ that identifies points that have $ d^R $-pseudodistance zero is denoted $ \sim_R $, and the quotient space under this equivalence relation is the \emph{metric quotient space} of $ (X, d) $ under the relation $ R $, denoted either $(X / d^R, d^R)$ or $(X/{\sim_R},d^R)$. The distance between equivalence classes $[x], [y]\in X/{\sim_R}$ is defined by $d^R([x],[y]):=d^R(x,y)$ and it is easy to see that this is indeed a metric. 

Many mathematicians tend to be more familiar with the concept of topological quotients. Given an equivalence relation $ R $, there are two primary reasons why the metric quotient space $ (X/d^R, d^R) $ might not coincide with the topological quotient $ X/R $. 
The first concerns their underlying sets: for instance, the metric quotient of $ \mathbb{R} $ by $ \mathbb{Q} $ reduces to a single point, since there are arbitrarily short $ R $-chains joining any two elements.

The second difference lies in their topologies: even if the quotients agree as sets, they may carry distinct topologies. A representative example (see Example 3.1.17 in~\cite{BBI}) involves a space $ X $ constructed as the disjoint union of countably many intervals $ I_i $, each with length $ \ell_i $, and a relation identifying all the left endpoints. The metric quotient’s topology depends on the sequence $ \ell_i $: it is compact if $ \ell_i \to 0 $, and non-compact otherwise. In contrast, the topological quotient is unaffected by the particular values of $ \ell_i $.

The problem of having quotients which agree as sets but not as topological spaces does not arise
if $X$ is compact (see \cite{BBI}): If 
$(X,d)$ is a compact metric space and $R$ is a reflexive and symmetric relation on $X$, then the metric quotient $(X/d^R,d^R)$ and the topological quotient $X/ R$ are homeomorphic. Furthermore, if $(X,d)$ is a length space and $R$ is a reflexive and symmetric relation in $X$, then the metric quotient $(X/d^R,d^R)$ is also a length space.

\noindent

    A \textit{separation} of a topological space $X$ is a decomposition of $X$ as a disjoint union $X = A \cup B$ where $A$ and $B$ are non-empty closed subsets of $X$. We say that $X$ is \textit{connected} if there is no separation of X. A subset $C \subset X$ \textit{separates} two points $x,y \in X$ (or two subsets $D,E \subset X)$ if there is a separation $X \setminus C = A \cup B$, with $x \in A$ and y $\in B \ (\text{or } D \subset A$ and $E \subset B$). If X is connected and $X \setminus C$ is not, we say that $C$ \textit{separates} X. %A \textit{continuum} is a compact and connected Hausdorff space.

  An equivalence relation $R$ in a topological space $X$ is said to be \textit{closed} if $R$ is closed as a subset in the product space $X \times X$.

        Furthermore, a \textit{partition or decomposition} $\mathcal{G}$ of a topological space into compact sets is \textit{upper semi-continuous} if, for every element $\zeta \in \mathcal{G}$ and every open set $U\supset \zeta$, there exists an open $V\subset U$ with $\zeta \subset V$ such that every $\zeta' \subset \mathcal{G}$ with $\zeta'\cap V\neq \emptyset$,  we have $ \zeta' \subset U$. The decomposition is \textit{monotone } if its elements are connected.

An important theorem by R.L. Moore provides sufficient conditions for the topological quotient of a 2-sphere to be a 2-sphere again. %This theorem forms the basis for the following fact: every simple folding produces a paper space homeomorphic to the two-dimensional sphere.

\begin{theorem}[\cite{moore1925upper}] Let $\mathcal{G}$ be an upper semi-continuous and monotone decomposition of a topological sphere $S$ such that no element of $\mathcal{G}$ separates $S$. Then, the quotient space obtained by collapsing each decomposition element to a point is a topological sphere.
\end{theorem}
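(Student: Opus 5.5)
The statement is R.\,L. Moore's classical decomposition theorem, and the paper only quotes it. If I were to prove it, I would not build an explicit homeomorphism. Instead I would verify a topological characterization of $S^2$ for the quotient $Q=S/\mathcal{G}$, with quotient map $\pi\colon S\to Q$. I would use the Kline sphere characterization, in Bing's form: a nondegenerate, locally connected, compact metrizable continuum that is separated by no pair of points, but is separated by every simple closed curve, is homeomorphic to $S^2$.

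\textbf{Step 1: $Q$ is a Peano continuum.}
\begin{itemize}
\item Upper semi-continuity means exactly that $\pi$ is a closed map.
\item A closed map with compact fibres on a compact metric space has a Hausdorff, second countable image. By Urysohn, $Q$ is therefore compact and metrizable.
\item $Q$ is connected and locally connected, because it is a continuous image of the Peano continuum $S$.
\item $Q$ is nondegenerate: no element of $\mathcal{G}$ separates $S$, so no element is all of $S$.
\end{itemize}
Monotonicity, combined with closedness of $\pi$, gives a key fact: for any connected closed set $C\subset Q$, the preimage $\pi^{-1}(C)$ is connected. Consequently, if $U\subset Q$ is open, then $\pi$ induces a bijection between the components of $\pi^{-1}(U)$ and the components of $U$. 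Separation questions in $Q$ thus translate into separation questions for saturated sets in $S$.

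\textbf{Step 2: no pair of points separates $Q$.} For $p,q\in Q$, the set $\pi^{-1}\{p,q\}$ is a disjoint union of two continua, neither of which separates $S$. By Alexander duality on $S^2$, a continuum $K$ fails to separate $S^2$ iff $\check H^1(K)=0$. The Čech cohomology of a disjoint union is the direct sum, so the union also has $\check H^1=0$ and does not separate $S$. By Step 1, $Q\setminus\{p,q\}$ is then connected. Taking $p=q$ shows that no single point separates $Q$ either.

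\textbf{Step 3: every simple closed curve $J\subset Q$ separates $Q$.} This is the step I expect to be the main obstacle. Set $K=\pi^{-1}(J)$, a continuum in $S$, and consider the restriction $\pi|_K\colon K\to J$. I would argue as follows.
\begin{enumerate}
\item Each fibre is a continuum that does not separate $S^2$. By the duality above it has trivial Čech cohomology; indeed it is cell-like, being an intersection of nested closed discs.
\item Hence $\pi|_K$ is a closed surjection with acyclic fibres.
\item The Vietoris--Begle theorem then gives $\check H^1(K)\cong\check H^1(J)\cong\mathbb{Z}$.
\item Alexander duality now shows that $S\setminus K$ has exactly two components.
\item By the correspondence in Step 1, $Q\setminus J$ has two components, so $J$ separates $Q$.
\end{enumerate}
The delicate point is justifying Vietoris--Begle for Čech cohomology, and checking that "nonseparating continuum in $S^2$" really yields acyclic fibres. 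If one prefers to avoid sheaf-theoretic tools, the alternative is Moore's original route: approximate the fibres by small discs and perform an elementary Janiszewski-type argument, at the cost of a much longer proof.

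With Steps 1--3 in hand, the Kline--Bing characterization yields $Q\cong S^2$, which is the statement.
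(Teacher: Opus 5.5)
The paper gives no proof of this statement. It quotes Moore's 1925 theorem as background, so there is no internal argument to compare against.

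Your argument is correct in substance, apart from the degenerate case below, and it is a genuinely different route from the classical proofs.
\begin{itemize}
\item Moore verified his own axiomatic characterization of the plane directly for the decomposition space, using elementary point-set arguments.
\item The standard decomposition-theoretic proof shows instead that each nonseparating continuum in $S^2$ is cellular and that the decomposition is shrinkable in Bing's sense. This gives the stronger conclusion that $\pi$ is approximable by homeomorphisms.
\end{itemize}
Your route replaces that geometric work with three heavy black boxes: Bing's solution of the Kline sphere problem, Alexander duality in \v{C}ech cohomology, and the Vietoris--Begle theorem. In return it is short and modular. Your use of Vietoris--Begle is legitimate. The map $\pi|_K$ is a closed surjection of compact metric spaces whose fibres have vanishing reduced \v{C}ech cohomology in degrees $0$ and $1$. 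That is all you need to obtain $\check{H}^1(K)\cong\check{H}^1(J)\cong\mathbb{Z}$. What your route does not give is any control of $\pi$ itself, only the homeomorphism type of $S/\mathcal{G}$.

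One correction. Your inference ``no element of $\mathcal{G}$ separates $S$, so no element is all of $S$'' is false, because $S\setminus S=\emptyset$ is connected. In fact $\mathcal{G}=\{S\}$ satisfies every hypothesis as stated and collapses $S$ to a point. The theorem tacitly assumes that $\mathcal{G}$ has at least two elements. You should state this as a hypothesis when you claim that $Q$ is nondegenerate, rather than try to derive it.
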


To connect homotopies with quotient spaces, consider a set $X$ with an equivalence relation $R$. A homotopy $\tilde{H}:X \times I \to X$ that respects $R$ pointwise along $t$ induces a well-defined homotopy on the quotient space $X/R$. This relies on the fact that any continuous map on $X$ preserving the equivalence relation extends uniquely to $X/R$, ensuring that the behavior of functions is consistently preserved under the quotient operation. The following theorem formalizes this construction.

%To relate homotopies with quotient spaces, consider a set $X$ equipped with an equivalence relation $R$. If a homotopy $\tilde{H}:X \times I \to X$ respects the relation $R$ pointwise along the parameter $t$, it induces a well-defined homotopy on the quotient space $X/R$. The following theorem formalizes this construction.

%The theorem guarantees that any continuous function defined on the original space that respects the equivalence relation can be uniquely extended to a continuous function on the quotient space. This is essential for ensuring that the properties of functions are preserved under the quotient operation.

\begin{theorem}\label{univ.thm}
	Given $ X $ a set and an equivalence relation $ R $ on $ X $, let $ \tilde{H}: X \times I \to X $ be a homotopy in $ X $ such that for all $ x, y \in X $, if $ x $ and $ y $ are equivalent under the relation $ R $, then $ \tilde{H}(x, t) = \tilde{H}(y, t) $ for all $ t \in I $, where $ I $ is the unit interval. There exists a unique homotopy $ F: X/R \times I \to I/R $ in the quotient such that 
    $$
    \pi_R \circ \tilde{H}_t = F_t \circ \pi_R.
    $$%\textcolor{red}{\tt Everything is ok with this theorem?}
\end{theorem}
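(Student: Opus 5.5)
The plan is to define $F$ by the only formula compatible with the required identity, check that it is well defined, and then deduce continuity from the universal property of quotient maps. I read the statement with $X$ a topological space and $\tilde H$ continuous, and with codomain $X/R$ (the ``$I/R$'' in the statement should be $X/R$). Write $\pi_R\colon X\to X/R$ for the quotient map. For $[x]\in X/R$ and $t\in I$ set
$$
F([x],t):=\pi_R\bigl(\tilde H(x,t)\bigr).
$$
If $xRy$, the hypothesis gives $\tilde H(x,t)=\tilde H(y,t)$, so $F$ does not depend on the representative. By construction $F_t\circ\pi_R=\pi_R\circ\tilde H_t$ for every $t$.

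Uniqueness is immediate. Any $F'$ satisfying the identity agrees with $F$ on $\pi_R(X)\times I$, which is all of $X/R\times I$ because $\pi_R$ is surjective.

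Continuity is the only step with real content. Consider the map $\pi_R\times\mathrm{id}_I\colon X\times I\to X/R\times I$. The construction gives
$$
F\circ(\pi_R\times\mathrm{id}_I)=\pi_R\circ\tilde H,
$$
and the right-hand side is continuous as a composition of continuous maps. So it suffices to know that $\pi_R\times\mathrm{id}_I$ is itself a quotient map; the universal property of quotient topologies then yields continuity of $F$.

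This is the main obstacle, since a product of quotient maps need not be a quotient map. It holds here by Whitehead's lemma: if $p$ is a quotient map and $K$ is locally compact Hausdorff, then $p\times\mathrm{id}_K$ is a quotient map. I would either cite this, or prove it directly for $K=I$ as follows. Take $W\subset X/R\times I$ whose preimage $U$ in $X\times I$ is open, and fix $([x_0],t_0)\in W$. Using the compactness of closed subintervals of $I$ (a tube-lemma argument), choose a closed neighbourhood $J$ of $t_0$ and build an open saturated set $V\ni x_0$ with $V\times J\subset U$. Saturation is obtained by iterating: start with an open $V_0\ni x_0$ with $V_0\times J\subset U$, let $V_{n+1}\supset \pi_R^{-1}(\pi_R(V_n))$ be open with $V_{n+1}\times J\subset U$, and take $V=\bigcup_n V_n$. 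Then $\pi_R(V)\times\operatorname{int}J$ is an open neighbourhood of $([x_0],t_0)$ contained in $W$.

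Finally, $F_0=\mathrm{id}$ whenever $\tilde H_0=\mathrm{id}$, so $F$ is a homotopy between the maps induced by $\tilde H_0$ and $\tilde H_1$.
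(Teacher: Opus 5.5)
Your proposal is correct, and it produces the same map as the paper. The paper factors each slice as $\tilde H_t=f_t\circ\pi_R$ with $f_t\colon X/R\to X$ continuous and sets $F_t=\pi_R\circ f_t$, which is exactly your $F([x],t)=\pi_R(\tilde H(x,t))$.

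The real difference is in what gets proved about continuity. The paper's argument is purely slice-wise, so it only shows that each $F_t$ is continuous on $X/R$. It never addresses continuity of $F$ jointly on $X/R\times I$, which is what is needed for $F$ to be a homotopy. Uniqueness, which you correctly derive from surjectivity of $\pi_R$, is also left implicit there.

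Your route supplies this missing step. Writing $F\circ(\pi_R\times\mathrm{id}_I)=\pi_R\circ\tilde H$ reduces joint continuity to $\pi_R\times\mathrm{id}_I$ being a quotient map. That is Whitehead's lemma, since $I$ is compact Hausdorff, and your iterative saturation sketch is the standard proof of it. In the step producing $V_{n+1}$ you are implicitly using that $U$, being a preimage under $\pi_R\times\mathrm{id}_I$, is saturated. Hence $\pi_R^{-1}(\pi_R(V_n))\times J\subset U$, and the tube lemma applies at each of these points; it is worth stating this explicitly.

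In short, the paper's version is shorter and keeps the slice-wise factorization through $X$. Yours is the one that actually establishes that $F$ is a homotopy on the quotient.
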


\begin{proof}
    
For each $t\in I$, by the Universal Property of the Quotient Topology, there exist a unique continuous function $f_t:X/R\to X$ such that 
$$\Tilde{H}_t=f_t\circ \pi_R.$$

Therefore, 
$$\pi_R\circ \Tilde{H}_t=\pi_R \circ f_t \circ \pi_R.$$

Defining $F_t:=\pi_R \circ f_t$, the result follows. 
    
\end{proof}

\subsection{Paper Surfaces} \label{sec:paper_spaces}

In this section, paper spaces and their properties are examined. These spaces arise as quotients formed by identifying pairs of sub-segments from the boundary of a flat polygon or, more generally, a flat multipolygon. The resulting structure is orientable and exhibits a flat conic geometry. A well-known example is the torus, which can be obtained as the quotient of a square by identifying pairs of opposite sides. The construction is not limited to a finite number of identifications; surfaces can also be formed by identifying an infinite collection of segments along the edges of $ P $.
The primary references for this section include \cite{de2010paper}, \cite{de2012paper}, \cite{figueroa2018estimativas}, and \cite{Marceldisserta}. 

We start by fixing some notation and presenting some definitions. An \textit{arc} in a metric space $ X $ is a continuous and injective map $ \gamma: [0,1] \to X $, such that $ \gamma([0,1]) \subset X $. The points $ \gamma(0) $ and $ \gamma(1) $ are referred to as its \textit{endpoints}, while the \textit{interior} of the arc, denoted $ \mathring{\gamma} $, corresponds to $ \gamma((0,1)) $. 

A \textit{segment} is a special case of an arc in $ \mathbb{C} $ that lies entirely within a straight line. The length of a segment $ \alpha $ is represented by $ |\alpha| $.

A \textit{simple closed curve} in $ X $ is the image of a continuous injective function from the unit circle into $ X $. When an arc or a simple closed curve in $ \mathbb{C} $ consists of a finite sequence of connected line segments, it is termed \textit{polygonal}. The longest contiguous segments forming a polygonal curve are called its \textit{edges}, and their endpoints are known as \textit{vertices}. 

A \textit{polygon} is a closed topological disk in $ \mathbb{C} $ whose boundary forms a polygonal simple closed curve. Its \textit{vertices} correspond to those of its boundary, and its \textit{sides} are the edges forming its perimeter.

A \textit{multipolygon} is a disjoint union of a finite number of polygons. Similarly, a \textit{polygonal multicurve} consists of a disjoint union of finitely many polygonal simple closed curves.

Consider $P \subset \mathbb{R}^2$ to be a multipolygon and let $C = \partial P$ denote its boundary, with each component of $C$ oriented positively.

\begin{definition}
  Let $\alpha, \alpha' \subset C$ be closed, non-trivial segments of the same length with disjoint interiors. The \textit{segment pairing} $\left\langle \alpha,\alpha' \right\rangle$ is the equivalence relation that identifies pairs of points on $\alpha$ and $\alpha'$ in a length-preserving and orientation-reversing manner. Specifically, if $\gamma_1:[0,L]\rightarrow\mathbb{R}^2$ and $\gamma_2:[0,L]\rightarrow \mathbb{R}^2$ are parameterizations of $\alpha$ and $\alpha'$, respectively, by arc length $(\text{where  }L=|\alpha|=|\alpha'|)$ and are compatible with the orientation of $C$, then  each element of $\langle \alpha, \alpha' \rangle$ is of the form $\{\gamma_1(t),\gamma_2(L-t)\}$. The segments $\alpha$ and $\alpha'$ need not coincide with entire edges of $C$ and, in 
most cases, they do not.
\end{definition}

\begin{figure}[!h]
    \centering
    \includegraphics[width=0.45\linewidth]{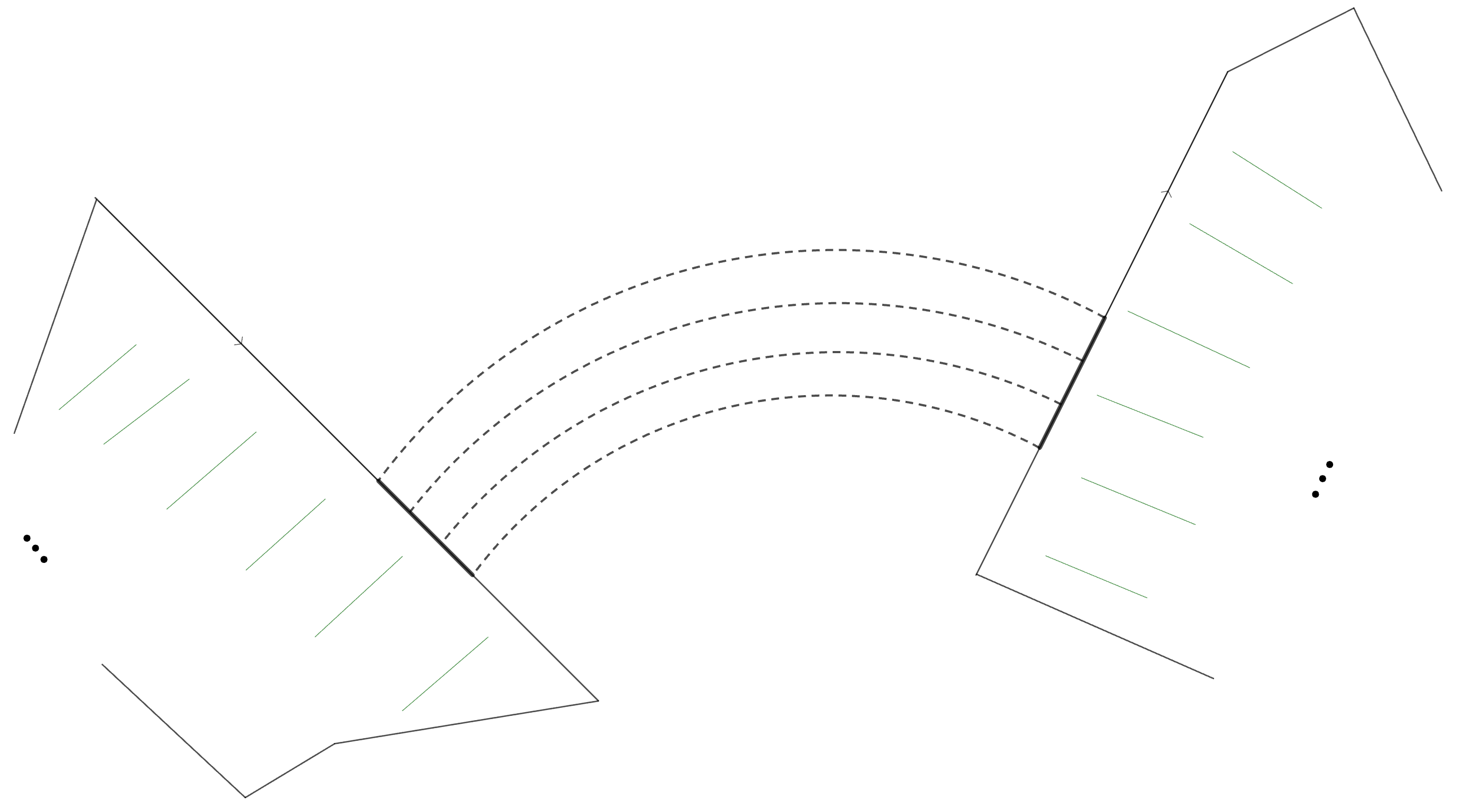}
    \caption{Segment pairing.}
    \label{fig:seg_pairing}
\end{figure}

The segments $\alpha, \alpha'$ and any two points that are identified under the pairing are said to be \textit{paired}. Two paired points that lie in the interior of a segment pairing form an \textit{interior pair}. Note that $\left\langle \alpha,\alpha' \right\rangle$ and $\left\langle \alpha',\alpha \right\rangle$ represent the same pairing.

Two segment pairings $\left\langle \alpha, \alpha' \right\rangle$ and $\left\langle \beta, \beta' \right\rangle$ have \textit{disjoint interiors} if the interiors of all four segments are disjoint.

The \textit{length} of a paired segment $\left\langle \alpha, \alpha' \right\rangle$, denoted $|\left\langle \alpha, \alpha' \right\rangle|$, is the length of one of the segments in the pairing, i.e. $$|\left\langle \alpha, \alpha' \right\rangle| := |\alpha|_{\mathbb{R}^2} = |\alpha'|_{\mathbb{R}^2}.$$

If $\mathcal{P} = {\left\langle \alpha_i, \alpha_i' \right\rangle}$ is a (countable) collection of pairwise interior-disjoint segment pairings, its length, denoted  $|\mathcal {P}|$, is the sum of the lengths of the pairings in $\mathcal{P}$, that is, $$|\mathcal{P}| = \sum_i |\left\langle \alpha_i, \alpha_i' \right\rangle|.$$

A collection $\mathcal{P}$ of interior-disjoints pairings is \textit{full} if $|\mathcal{P}|$ equals half the length of $C$. This means that the pairings in $\mathcal{P}$ cover all of $C$ up to a set of Lebesgue 1-dimensional measure zero.

A pairing of two segments that have one endpoint in common is called a \textit{fold}, and the common endpoint is referred to as the  \textit{folding point}. %Thus, the folding head in a fold is alone in its equivalence class, since the elements of a pairing, and therefore of a collection $\mathcal{P}$ as defined earlier, have at most two elements, having only one if and only if that element is a endpoint.

An interior-disjoint collection $ \mathcal{P} $ of segment pairings induces a reflexive and symmetric \textit{pairing relation}, also denoted by $ \mathcal{P} $, such that  
$$
\mathcal{P} = \{(x, x') \mid x, x' \text{ are paired or } x = x'\}.
$$

\begin{definition}
 
  A \textit{paper folding scheme}, or simply a \textit{paper folding}, is a pair $(P, \mathcal{P})$ where $P \subset \mathbb{R}^2$ is a multipolygon with the intrinsic metric $d_P$ induced from $\mathbb{R}^2$, and $\mathcal{P}$ is a full collection of interior-disjoint segment pairings in $\partial P$. The metric quotient $S := P / d_P^\mathcal{P}$ of $P$ under the pseudometric $d_S=d_P^{\mathcal{P}}$ induced by the pairing relation $$R_\mathcal{ P} := \{(x,x') \ | \ x = x' \mbox{or }x,x' \mbox{are paired under some pairing in } \mathcal{P}\}$$ is the  associated \textit{paper space}. If it is known that $S$ is a closed (compact without boundary) topological surface, then $(P,\mathcal{P})$ is a \textit{surface paper-folding scheme} and $S$ is the associated \textit{paper surface}.
\end{definition}

The projection map is denoted by $ \pi\colon P \to S $, and the quotient $ G = \pi(\partial P) \subset S $ of the boundary is referred to as the \textit{scar}. Notice that the restriction $ \pi: \operatorname{Int}(P) \to S \setminus G $ is a homeomorphism. When referring to points of $ P $ and $ S $ associated via $ \pi $, symbols with and without a tilde will often be used. For instance, if $ x \in S $, then $ \tilde{x} \in P $ will denote an arbitrary element of $ \pi^{-1}(x) $.

%The focus will be on plain folding schemes, which we define below, as they represent both the most fundamental and frequently encountered type of paper folding. Moreover, in these cases, the resulting paper space is homeomorphic to a sphere.

We are also interested in a particular class of paper folding schemes, known as plain paper folding schemes. These constitute both the most fundamental and the most frequently occurring instances within the theory. Furthermore, in such cases, the associated paper space is homeomorphic to a sphere. For this class, in addition to the paper surfaces being Ahlfors 2-regular and linearly locally contractible, we shall see that they are quasisymmetrically equivalent to the $2$‑sphere. In particular, the uniformization result holds precisely in the setting of plain paper folding schemes, where the resulting paper space is homeomorphic to a sphere.

Specifically, let $ \gamma $ be a polygonal arc or a simple polygonal closed curve. Two pairs of (not necessarily distinct) points $ \{x, x'\} $ and $ \{y, y'\} $ on $ \gamma $ are \textit{unlinked} if one of the pairs is contained in the closure of a connected component of the complement of the other; otherwise, they are considered \textit{linked}. A symmetric and reflexive relation $ R $ on $ \gamma $ is \textit{unlinked} if any two unrelated pairs of related points are unlinked, that is, if $ x R x' $, $ y R y' $, and neither $ x $ nor $ x' $ is related to either $ y $ or $ y' $, then $ \{x, x'\} $ and $ \{y, y'\} $ are unlinked. An interior-disjoint collection $ \mathcal{P} $ of segment pairings on $ \gamma $ is unlinked if the relation $ \mathcal{P} $ is unlinked. Finally, a \textit{plain paper-folding scheme} $ (P, \mathcal{P}) $ is defined as a scheme where $ P $ consists of a single polygon, and the relation $ \mathcal{P} $ is unlinked.

\begin{example}\label{ex13}
\normalfont
Consider $\{a_i\}_{i\in \mathbb{N}}$ a sequence of positive real numbers such that $\sum a_i=\frac{1}{2}$. Let $P=\{a+ib \in \mathbb{C}: \ 0\le a,b \le 1\}$ be the unit square, and let $\mathcal{P}$ be a pairing that glues the horizontal sides of $P$ and folds the right side of $P$ in half. On the left side, we define a countable  number of folds of length $a_i$, denoted  $\{\langle \alpha_i,\alpha_i'\rangle\}_{i\in \mathbb{N} }$. See Figure~\ref{fig2}.

 Here, $\alpha_0$ is the segment with length $a_0$ starting at the upper left vertex of $P$, and $\alpha_0'$ is the segment of $\partial P$ with length $a_0$  starting at the endpoint of $\alpha_0 $. Similarly, $\alpha_1$ is the segment of $\partial P$ with length $a_1$  starting  at the endpoint of $\alpha_0'$, and so on.  This process continues such that the left side of $P$ is covered, except for its lower vertex, by folds arranged from top to bottom.
\end{example}

\begin{figure}[htb]
	\centering
	\includegraphics[width=0.36\textwidth]{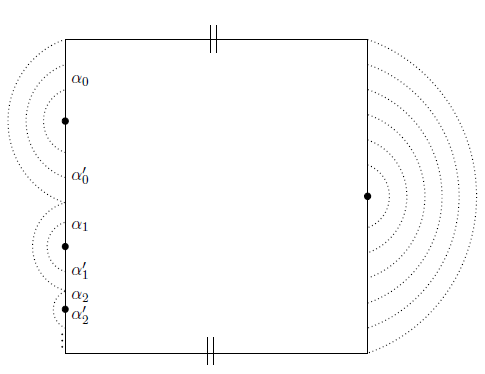}
	\caption{Paper Space of Example \ref{ex13}.}
    \label{fig2}
	\end{figure}
    Building on this, the points of a paper space are classified according to the following criteria.
    For $k \in \mathbb{N} \cup \{ \infty \}$, a point $x \in G=\pi(\partial P)$ is a \textit{vertex of valence $k$} or simply a $k$\textit{-vertex} if $\#\pi^{-1}(x) = k \neq 2,$ or $\#\pi^{-1}(x) = k = 2$ and $\pi^{-1}(x)$ contains a vertex of $P$ (here $\#A$ denotes the cardinality of the set $A$). Points of $G$ that are not vertices or accumulations of vertices are \textit{planar points}.
 A point is a \textit{regular vertex} if it is not an accumulation of vertices or an $\infty$- vertex. 
  A point is \textit{singular} if it is an $\infty$-vertex or is an accumulation of vertices. 

Now, observe that if $ \langle \alpha, \alpha' \rangle \in \mathcal{P} $ and $ x \in \operatorname{int}(\alpha) $, then $ \pi_P(x) \in S_P $ is a planar point, because the preimage $ \pi_P^{-1}(\pi_P(x)) $ consists of exactly two points, neither of which is a vertex of $ P $. This distinction explains why the definition above differentiates between 2-vertices and $ k $-vertices when $ k \neq 2 $.

On the other hand, the endpoints of paired segments are not necessarily projected onto conic or non-regular points. Given $ \langle \alpha, \alpha' \rangle \in P $, both segments can be decomposed into two subsegments $ \alpha_1 \cup \alpha_2 $ and $ \alpha'_1 \cup \alpha'_2 $ sharing a common point. In this case, $ \alpha_1 $ is paired with $ \alpha'_1 $, and $ \alpha_2 $ with $ \alpha'_2 $, forming a new pairing $ P' $. Since $ S_P = S_{P'} $, the common endpoint of $ \alpha_1 $ and $ \alpha_2 $ is projected onto a planar point in the paper space.

However, it is always possible to choose a paper folding $ (P, \mathcal{P}) $ such that the endpoints of any segment pairing are projected onto non-planar points in the paper space. These configurations are known as \textit{efficient paper foldings} (see \cite{Marceldisserta}, Section 2.4). %In this thesis, it will always be assumed, without loss of generality, that any paper folding $ (P, P) $ is efficient.

\begin{theorem}
     (Topological structure of a plain paper folding \cite{de2012paper}). The quotient $S$ of a plain paper folding scheme is a topological sphere.
\end{theorem}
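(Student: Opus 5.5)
The plan is to apply Moore's theorem to a decomposition of the 2-sphere obtained by doubling the polygon. Let $P$ be the single polygon of the plain scheme, a closed disk with boundary curve $C$. I take two copies $P_1,P_2$ of $P$ and glue them along $C$ by the identity, giving a topological sphere $\Sigma=P_1\cup_C P_2$ with $C$ as its equator. Equivalently, and a little more concretely, I could regard $P\subset S^2=\mathbb{C}\cup\{\infty\}$ and use the closed complementary disk $D=S^2\setminus\operatorname{Int}(P)$. I then define a decomposition $\mathcal{G}$ of $S^2$ whose elements are of three kinds:
\begin{itemize}
\item singletons of $\operatorname{Int}(P)$;
\item for each equivalence class $\xi$ of the closed equivalence relation generated by $\mathcal{P}$ on $C$, a compact connected set $K_\xi\subset D$ with $K_\xi\cap C=\xi$;
\item singletons of the points of $D$ not used by any $K_\xi$.
\end{itemize}
The natural choice for $K_\xi$ is a ``hyperbolic convex hull'' of $\xi$ inside $D$, after identifying $D$ conformally with a round disk. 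Concretely, for each pairing $\langle\alpha,\alpha'\rangle$ and each $t$, I join $\gamma_1(t)$ to $\gamma_2(L-t)$ by a geodesic chord. Unlinkedness of $\mathcal{P}$ is exactly what makes chords from distinct classes disjoint, because linked pairs would force crossing chords. Classes with more than two points, such as vertices, get the convex hull of their points.

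Next I check the hypotheses of Moore's theorem one at a time.

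\textbf{Monotone.} Each $K_\xi$ is a union of chords sharing endpoints, or a hull, so it is connected.

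\textbf{Non-separating.} A chord, or a hull of boundary points in a disk, is a tree-like continuum meeting the complement only along a ``thin'' set, and its complement in $S^2$ is connected. The key point is that $\operatorname{Int}(P)$ lies on one side and touches every complementary region of $K_\xi$ inside $D$ through the arcs of $C$.

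\textbf{Upper semicontinuity.} This is equivalent to the union of $\mathcal{G}$, as a relation, being closed. The relation $\mathcal{P}$ on $C$ need not itself be closed when there are infinitely many pairings accumulating, so I first replace it by its closure. Then I argue that limits of chords are chords, or degenerate points, joining limit pairs. Since the total pairing length is finite, the segments $\alpha_i$ shrink, and accumulating chords shrink to points.

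Once Moore's theorem applies, $S^2/\mathcal{G}$ is a sphere. The composition $P\hookrightarrow S^2\to S^2/\mathcal{G}$ is surjective, because each element of $\mathcal{G}$ meets $P$, and it induces a continuous bijection from the topological quotient $P/R$ of $P$ by the closed relation onto $S^2/\mathcal{G}$. As $P/R$ is compact and the target is Hausdorff, this is a homeomorphism. Finally, by the fact recalled in Section~\ref{sec:quotient_metric} that for compact $X$ the metric and topological quotients agree, $S$ is homeomorphic to $P/R$, so $S$ is a sphere.

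I expect the main obstacle to be making this last identification fully rigorous. The metric quotient identifies points at $d^R$-pseudodistance zero, and this could a priori collapse more than the closed relation generated by $\mathcal{P}$. For example, a singular accumulation of folds might merge classes. I would handle this by showing that the transitive closure of $\mathcal{P}$, together with its limit points, is exactly the zero-pseudodistance relation. Fullness and interior-disjointness ensure that short chains stay near a single class. The second delicate point is verifying upper semicontinuity of the chord construction near infinite-valence vertices.
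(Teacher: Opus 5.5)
The paper gives no proof of this statement; it cites de Carvalho--Hall and places Moore's theorem just before it as the intended tool. Your plan of filling the complementary disk $D$ with hyperbolic convex hulls of boundary classes is the natural route of that kind.

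\textbf{The gap.} The problem is the claim that $P\hookrightarrow S^2\to S^2/\mathcal{G}$ is surjective ``because each element of $\mathcal{G}$ meets $P$''. Your third kind of element, a point of $D\setminus C$ lying in no $K_\xi$, does not meet $P$. Surjectivity is equivalent to $\bigcup_\xi K_\xi=D$. This is the real content of the theorem and the only place fullness can enter: leave an arc of $C$ unpaired and every other step you wrote still goes through, yet the quotient has boundary.

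For the classes you chose, those of the smallest closed equivalence relation $R$ containing $\mathcal{P}$, this covering is actually false. So is your proposed repair that $R$ coincides with $\{d^R=0\}$. Here is a counterexample.
\begin{itemize}
\item Let $E$ be a measure-zero Cantor set in the bottom side of the unit square, containing both bottom corners. Fold every complementary interval $(a_i,b_i)$ of $E$ in half, glue the left side to the right side, and fold the top side in half. This scheme is full and unlinked.
\item Since $b_i-a_i\to 0$ and distinct gaps have distinct endpoints, the relation generated by $\mathcal{P}$ is already closed. The points of $E$ other than the $a_i,b_i$ and the corners are singleton classes.
\item Yet for $e,e'\in E$, the finite chain that runs along the bottom side and jumps the $N$ longest gaps between $e$ and $e'$ costs only the total length of the remaining gaps. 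Hence $d^R(e,e')=0$, and the paper space collapses all of $E$ to one point.
\item In your decomposition, the nonempty open set $\operatorname{int}\operatorname{hull}(E)$ meets no $K_\xi$. So $P/R$ is a proper closed subset of the sphere $S^2/\mathcal{G}$, and is not a sphere.
\end{itemize}
Fullness does the opposite of what you suggest at the end: it is precisely what creates cheap chains between far-apart points.

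\textbf{The repair.} Use $x\sim y\iff d^R(x,y)=0$ from the start. This relation is closed, since $d^R\le d_P$ makes $d^R$ continuous, and $S\cong P/{\sim}$ by compactness, so your final identification becomes immediate. Two arguments missing from your sketch are then needed.

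First, distinct $\sim$-classes must be unlinked. This is not inherited directly from $\mathcal{P}$, because classes are built from arbitrarily cheap chains. Realize cheap chains from $x$ to $y$ and from $x'$ to $y'$ as paths in $D$, made of pairing chords and short arcs of $C$. If the pairs were linked on $\partial D=C$, these paths would meet. Unlinkedness of $\mathcal{P}$ turns every kind of meeting (crossing chords, a chord endpoint on a short arc, overlapping arcs) into $d^R(x,x')<\epsilon$ for every $\epsilon>0$.

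Second, the hulls must fill $D$.
\begin{itemize}
\item Let $U$ be a component of $D\setminus\bigcup_\xi K_\xi$. This union is closed, and $\mathcal{G}$ is upper semicontinuous, because in the Klein model the hulls are Euclidean convex hulls, which vary continuously with the set.
\item Then $F=\overline U\cap C$ misses the interior of every paired segment, since near such a point the chords of that pairing sweep out a neighborhood in $D$. So $F$ is null by fullness.
\item Each boundary chord of $U$ lies in a single hull, so the endpoints of every complementary arc of $F$ are $\sim$-equivalent.
\item The chain argument from the Cantor example then puts all of $F$ in one class $\xi$. This gives $U\subset\operatorname{hull}(F)\subset K_\xi$, a contradiction.
\end{itemize}

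Finally, your assertion that accumulating chords shrink to points is false in general. Subdivide a single pairing into countably many pieces and the limit chords are nondegenerate. This is harmless once you work with a closed relation.
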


%According to Theorem \ref{teo1-29}, the metric $ d_S $ of a paper space is intrinsic. Since $ P $ is compact and the projection $ \pi $ is continuous and surjective, $ S = \pi(P) $ is also compact. Thus, by Theorem \ref{teo1-1-11}, $ d_S $ is strictly intrinsic. In other words, any two points in a paper space are either infinitely far apart or can be joined by a path whose $ d_S $-length matches the distance between them. 

%Furthermore, due to the compactness of $ S $, Theorem \ref{teo1-28} ensures that $ (S, d_S) $ is topologically homeomorphic to the quotient of $ P $ induced by the decomposition $ \partial P $. As $ \pi $ is a closed map and $ \partial P $ is a closed subset of $ P $, the quotient $ G = \pi(\partial P) $ forms a closed subset within $ S $.

In conclusion, as guaranteed by the discussions in previous sections, the metric $ d_S $ of a paper space is intrinsic. Given that $ P $ is compact and the projection map $ \pi $ is continuous and surjective, it follows that $ S = \pi(P) $ is also compact. Consequently, $ d_S $ is strictly intrinsic, meaning that any two points in a paper space are either infinitely far apart or can be joined by a path whose $ d_S $-length exactly matches the distance between them.

Furthermore, due to the compactness of $ S $, it follows that $ (S, d_S) $ is topologically homeomorphic to the quotient of $ P $ induced by the pairings on $ \partial P $. Since $ \pi $ is a closed map and $ \partial P $ is a closed subset of $ P $, the quotient $ G = \pi(\partial P) $ forms a closed subset within $ S $.

\subsection{Metric structure of paper spaces}\label{sec:metric_structure}

This section presents an analysis of the metric properties of paper spaces, with more detailed information available in \cite{de2012paper} and \cite{BBI}. Let $ S $ be a paper space. It will be seen that in a neighborhood of planar points, $ S $ is locally Euclidean, meaning that such points have an open neighborhood isometric to an open ball in $ \mathbb{R}^2 $. Similarly, in a neighborhood of a regular vertex $ x $, the space $ S $ is isometric to the apex of a cone, where the cone angle corresponds to the sum of the internal angles of the multipolygon $ P $ at the points of $ \pi^{-1}(x) $.

Throughout this section, $ (P, \mathcal{P}) $ represents an arbitrary paper-folding scheme and its associated paper space $ S $.

For a non-singular point $ x \in G $, the preimage under $ \pi $ is given by
$$
\pi^{-1}(x) = \{\tilde{x}_1, \dots, \tilde{x}_k\}.
$$
The internal angle of $ P $ at each $ \tilde{x}_i $ is denoted by $ \tilde{\eta}_i $, and the \textit{cone angle} at $ x $ is defined as
$$
\eta(x) = \tilde{\eta}_1 + \cdots + \tilde{\eta}_k.
$$
In particular, if $ x \in G $ is a planar point, then its cone angle is
$$
\eta(x) = 2\pi.
$$

Now, consider the topological concept of a \textit{cone}. The cone $ \operatorname{Cone}(X) $ over a topological space $ X $ is the quotient of $ [0,\infty) \times X $ by the equivalence relation that collapses $ \{0\} \times X $ to a point, which is called the \textit{origin} or \textit{apex} of the cone. If $ (X,d) $ is a metric space, then the cone can be given a metric, where for $ p = [t, x] $ and $ q = [s, y] \in \operatorname{Cone}(X) $, the distance is defined by:
$$
d_c(p,q)= \left\{
\begin{matrix}
\sqrt{t^2+s^2-2st\cos (d(x,y))}, & \text{if } d(x,y) \le \pi, \\
t+s & \text{if } d(x,y) \ge \pi.
\end{matrix}
\right.
$$

An important result is that if $ d $ is strictly intrinsic, then $ d_c $ is also strictly intrinsic, as shown in \cite{BBI}, Theorem 3.6.17.

Another important fact is that each planar point has an isometric neighborhood to a flat Euclidean ball, while each regular vertex has an isometric neighborhood to a ball centered at the vertex of a metric cone on a circle. Specifically, let $ S^1_r $ represent the circle of radius $ r > 0 $ in $ \mathbb{R}^2 $ equipped with the intrinsic metric. The corresponding cone, $ \operatorname{Cone}(S^1_r) $, is locally isometric to the Euclidean plane everywhere except at the apex when $ r \neq 1 $. In the special case where $ r = 1 $, the cone $ \operatorname{Cone}(S^1_1) $ is globally isometric to $ \mathbb{R}^2 $. These results can be found in \cite{de2012paper}.  %These results were stated in \cite{de2012paper}.

\begin{definition}
    A \textit{conic-flat surface} $S$ is a metric space that is locally isometric to cones on circles. Specifically, for every $p\in S$, there exist $r,\epsilon>0$, $x\in \operatorname{Cone}(S_r^1),$ and an isometry from $B_S(p,\epsilon)$ onto $B_{\operatorname{Cone}(S_r^1)}(x,\epsilon),$ where $S_r^1$ is a circle in $\mathbb{R}^2$ of radius $r$ with the intrinsic metric. This implies that there are two types of points on a conic-flat surface $S$: those at which $S$ is locally Euclidean and those where $S$ is locally isometric to a neighborhood of the apex of $\operatorname{Cone}(S_r^1)$. In the latter case, $2\pi r$ is the \textit{cone angle} at the point.
\end{definition}

The \textit{metric structure} of paper spaces, as discussed in \cite{de2012paper}, guarantees that for a non-singular point $ x $ of $ G $, there exists a radius $ r > 0 $ such that the ball $ B_S(x, r) $ is isometric to the ball $ B_{\operatorname{Cone}(S^1_{\eta(x)/2\pi})}(0, r) $. In particular, the complement $ S \setminus \mathcal{V}^s $, where $ \mathcal{V}^s $ denotes the set of singular points, forms a conic-flat surface. Furthermore, it is established that the metric $ d_S $ on a paper space $ S $ is strictly intrinsic.

Observe that if $ S $ has no singular points, this result ensures that every point in $ S $ has a neighborhood isometric to a ball centered at the apex of a cone over a circle.

\subsection{Quasisymmetric uniformization}\label{sec:qs_unif}

This section explores a metric space $ S $ that is homeomorphic to $ S^2 $ and examines the conditions under which $ S $ can be mapped onto $ S^2 $ through a quasisymmetric homomorphism. The main references used in this section are \cite{bonk2002quasisymmetric}, \cite{ahlfors1954}, \cite{falconer2004fractal}, and \cite{de2005extensions}.

\subsubsection{Quasisymmetric maps}Working directly with local distortion properties of mappings can be challenging. For this reason, it is often convenient to impose a stronger, global condition on the map $f$ to facilitate analysis and application. Quasisymmetric maps provide such a framework in the context of metric spaces, extending the notion of controlled distortion beyond the infinitesimal scale. %Due to the infinitesimal nature of quasiconformality, it can be challenging to work with directly. Therefore, it is often necessary to impose a stronger, global condition on the mapping $f$ to facilitate analysis and application. Quasisymmetric maps are a generalization of (quasi)conformal maps to metric spaces. Specifically, a homeomorphism between compact Riemannian manifolds is quasisymmetric if and only if it is quasiconformal.

\begin{definition}\label{def-qs}
	
	A homeomorphism $f:X\to Y$ is called $\eta$-quasisymmetric if there exists a homeomorphism $\eta: [0,\infty)\rightarrow [0,\infty)$ such that
	\vspace{-0.08cm}
	\begin{equation}\label{qs-eq2}
		d_X(x,a)\le td_X(x,b) \Rightarrow d_Y(f(x),f(a)) \le \eta(t) d_Y(f(x),f(b))
	\end{equation}
	for every $a,b,x \in X$ and every $t>0$. 
\end{definition}

\begin{figure}[!h]
	\centering
	\includegraphics[scale=0.60]{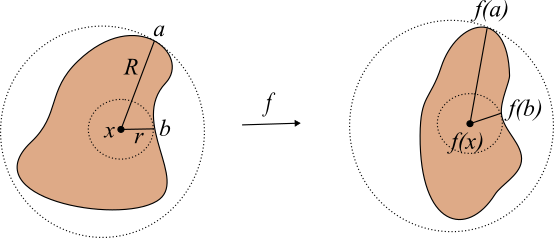}
	\caption{Quasisymmetric map. }
\end{figure}

The geometric meaning of this condition is that balls are mapped to “round” sets with quantitative control for their “eccentricity”.
This means that the image of a ball under a quasisymmetric map will not be excessively stretched or distorted, maintaining a controlled shape. $f$ is quasisymmetric if it distorts relative distances by a limited amount that depends on $t$.

From the definition, it is not difficult to prove that quasisymmetric homeomorphisms between reasonable spaces possess many strong properties: they are Hölder continuous, and quasisymmetry extends to limit homeomorphisms. Furthermore, note that quasisymmetric maps are always injective and continuous. The composition of two quasisymmetric maps is again quasisymmetric, and the inverse of a surjective quasisymmetric map is also quasisymmetric. Two metric spaces are said to be \textit{quasisymmetrically equivalent} if there is a quasisymmetric map from one onto the other \cite{davidsemmes}.

The map $f(x)=\lambda x$ on $\mathbb{R}^n$ is an example of quasisymmetric map with $\eta(t)=t$, for each $\lambda>0.$

\subsubsection{Hausdorff measure}%Another important concept here is the Hausdorff measure. 
The Hausdorff measure is a generalization of the classical notions of area and volume to non-integer dimensions. This measure is indispensable for understanding the size of the smallest subsets of a metric space, especially in fractal geometry. The main reference for this section is \cite{falconer2004fractal}.

Let $ \{C_i\} $ be a countable (or finite) collection of sets with diameters at most $ \delta $ that cover $ C $, i.e., $ C \subset \bigcup_{i=1}^\infty C_i $ with $ 0 < \operatorname{diam}(C_i) \leq \delta $ for each $ i $. In this case, we say that $ \{C_i\} $ is a $ \delta $-\textit{cover} of $ C $.

Now, for $ 0 \leq s < \infty $ and $ A \subset X $, the Hausdorff measure of $ A $ is defined as
$$
\mathcal{H}_\delta^s(A) = \inf \left\{ \sum_j \operatorname{diam}(C_j)^s \; ; \; \{C_i\} \text{ is a } \delta\text{-cover of } A \right\},
$$
where the infimum is taken over all countable covers of $ A $ by sets $ C_j \subset X $ satisfying $ \operatorname{diam}(C_j) < \delta $. Here, the diameter of a set $ C $ is given by
$$
\operatorname{diam}(C) = \sup \left\{ d_X(x, y) \; ; \; x, y \in C \right\},
$$
which is the greatest distance between any pair of points in $ C $.

As $ \delta $ decreases, the class of permissible covers of $ A $ is reduced. Therefore, the infimum $ \mathcal{H}_\delta^s(A) $ increases, or at least does not decrease, as $ \delta \to 0 $, and thus approaches a limit. The \textit{$s$-dimensional Hausdorff measure} of $ A $ is then defined as
$$
\mathcal{H}^s(A) = \lim_{\delta \rightarrow 0} \mathcal{H}_\delta^s(A) = \sup_{\delta > 0} \mathcal{H}_\delta^s(A).
$$

Hausdorff measures generalize the familiar ideas of length, area, volume, and so on. For subsets of $ \mathbb{R}^n $, the $ n $-dimensional Hausdorff measure is, up to a constant multiple, just the $ n $-dimensional Lebesgue measure, which corresponds to the usual $ n $-dimensional volume. More precisely, if $ C $ is a Borel subset of $ \mathbb{R}^n $, then
$$
\mathcal{H}^n(C) = c_n^{-1} \operatorname{vol}^n(C),
$$
where $ c_n $ is the volume of a unit $ n $-dimensional ball.

\begin{lemma}\label{equivhausdorff}
     Let $X$ be a metric space and $d$ and $d'$ two Lipschitz equivalent metrics in $X$.  The $s$- dimensional Hausdorff measure related to each metric is also equivalent. 
\end{lemma}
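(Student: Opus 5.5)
The plan is to compare the $\delta$-covers directly under the two metrics. By hypothesis there is a constant $L\ge 1$ such that
\[
L^{-1} d(x,y)\le d'(x,y)\le L\, d(x,y)\quad\text{for all } x,y\in X .
\]
Consequently, for every subset $C\subset X$ we have $\operatorname{diam}_{d'}(C)\le L\operatorname{diam}_d(C)$, and symmetrically $\operatorname{diam}_d(C)\le L\operatorname{diam}_{d'}(C)$. Because covers are arbitrary subsets of $X$, and the underlying set is the same for both metrics, a cover of $A$ for one metric is also a cover of $A$ for the other. Only the bound on the diameters changes.

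\textbf{Comparing the $\delta$-approximations.} Fix $s\ge 0$, $A\subset X$ and $\delta>0$. Let $\{C_j\}$ be any $\delta$-cover of $A$ with respect to $d$. Then $\{C_j\}$ is an $L\delta$-cover of $A$ with respect to $d'$, and
\[
\sum_j \operatorname{diam}_{d'}(C_j)^s\le L^s\sum_j \operatorname{diam}_d(C_j)^s .
\]
Taking the infimum over all such covers gives $\mathcal{H}^{s}_{d',L\delta}(A)\le L^s\,\mathcal{H}^s_{d,\delta}(A)$.

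\textbf{Passing to the limit.} Letting $\delta\to 0$ sends $L\delta\to 0$ as well, so the monotone limits defining the Hausdorff measures yield
\[
\mathcal{H}^s_{d'}(A)\le L^s\,\mathcal{H}^s_d(A).
\]
Exchanging the roles of $d$ and $d'$ gives the reverse inequality $\mathcal{H}^s_d(A)\le L^s\,\mathcal{H}^s_{d'}(A)$. Hence the two measures satisfy $L^{-s}\mathcal{H}^s_d\le \mathcal{H}^s_{d'}\le L^s\mathcal{H}^s_d$, which is the claimed equivalence. In particular, they have the same null sets and the same sets of finite positive measure.

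\textbf{Points needing care.} None of the steps is a real obstacle, but two bookkeeping matters need attention. First, the definition in the paper writes both $0<\operatorname{diam}\le\delta$ and $\operatorname{diam}<\delta$; the argument works with either convention, because the scaled bound $L\delta$ still tends to $0$. Second, sets of diameter zero (singletons) should be allowed in covers, or else handled by enlarging them slightly, and this must be done consistently for both metrics. This is the form in which the lemma will be used later: to transfer Ahlfors $2$-regularity estimates between the quotient metric $d_S$ and Lipschitz-equivalent model metrics, such as cone metrics near vertices.
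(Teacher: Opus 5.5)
Your proof is correct and follows the same idea as the paper's: the two metrics share all covers, and $\operatorname{diam}_{d'}\le L\operatorname{diam}_d$ yields the factor $L^s$. It is in fact more careful than the paper, which asserts $\mathcal{H}^s_{d',\delta}\le L^s\mathcal{H}^s_{d,\delta}$ at the \emph{same} scale $\delta$ even though a $\delta$-cover for $d$ is only an $L\delta$-cover for $d'$ (at fixed $\delta$ this can fail: $X=[0,1]$, $d'=2d$, $s=\tfrac12$, $\delta=1$ give $\mathcal{H}^{1/2}_{d',1}(X)=2>\sqrt{2}=L^{1/2}\mathcal{H}^{1/2}_{d,1}(X)$), whereas your bound $\mathcal{H}^s_{d',L\delta}\le L^s\mathcal{H}^s_{d,\delta}$ followed by $\delta\to0$ is exactly the right bookkeeping.
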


\begin{proof}
    Since $d$ and $d'$ are equivalent metrics, that is, there exists a constant $L$ such that $\dfrac{1}{L}d'\le d\le Ld'$. Therefore, for any subset $E\subset X$, we have:
    $$\dfrac{1}{L^s}\mathcal{H}^s_{d,\delta}(E)\le \mathcal{H}^s_{d',\delta}(E)\le L^s \mathcal{H}^s_{d,\delta}(E).$$
\end{proof}

Notice that, the Hausdorff measure is invariant under isometries of the metric space. This means that the measure does not change if the set is translated, rotated, or reflected.

\subsubsection{Definitions of regularity}
Among metric spaces, Ahlfors regular spaces have been extensively studied over the past few decades. These spaces have proven to be an excellent setting for the development of harmonic analysis \cite{david2000regular}. One of the main tools used in this work is the condition of being linearly locally connected and Ahlfors regular.

\begin{definition}

A metric space ${X}$ is  \textit{Ahlfors} $Q$\textit{-regular} if there is a constant $C > 0$ such that the $Q$-dimensional Hausdorff measure $\mathcal{H}^Q$ of every closed  $r$-ball  $\overline{B}(a,r)$ satisfies
$$C^{-1}r^Q \leq \mathcal{H}^Q(\overline{B}(a,r)) \leq Cr^Q$$
for all  $0 < r \leq \operatorname{diam}({X})$.
    
\end{definition}

This condition ensures that the measure of balls in the space grows polynomially with the radius, which implies a uniform distribution of the measure across the space.  

\begin{definition}
    A metric space $(Z,d)$ is called $\lambda$-{\textit{linearly locally contractible}} where $\lambda \ge 1$, if every ball $B(a,r)$ in $Z$ with $0 < r \leq \operatorname{diam}{Z}/ {\lambda}$ is contractible inside  $B(a,\lambda r)$. This means there exists a continuous map $H : B(a,r) \times [0,1] \rightarrow B(a,\lambda r)$ such that $H(\cdot, 0)$ is the identity on $B(a,r)$ and $H(\cdot, 1)$ is a constant map. 

    The space is called \textit{linearly locally contractile} if it is $\lambda$-linearly locally contractile for some $\lambda \ge 1$.

\end{definition}

This property ensures that each small ball in the space can be continuously shrunk to a point within a slightly larger ball.

These two conditions often appear together in the study of metric spaces, particularly in the context of geometric group theory and analysis on metric spaces. For example, if a space is Ahlfors $n$-regular and linearly locally contractible, it often satisfies important geometric and analytic properties, such as Poincaré inequalities.

\begin{definition}
  A metric space $(Z,d)$ is called $\lambda\textit{-LLC}$ for $\lambda \ge 1$ (LLC stands for \textit{linearly locally connected}) if the following two conditions are satisfied:
\begin{itemize}
     \item $(\lambda$-LLC$_1)$ If $B(a,r)$ is a ball in $Z$ and $x,y \in B(a,r)$, then there is a continuum $E \subset B( a,\lambda r)$ containing $x$ and $y$.\\
    
     \begin{figure}[!h]
          \centering
        \includegraphics[scale=0.4]{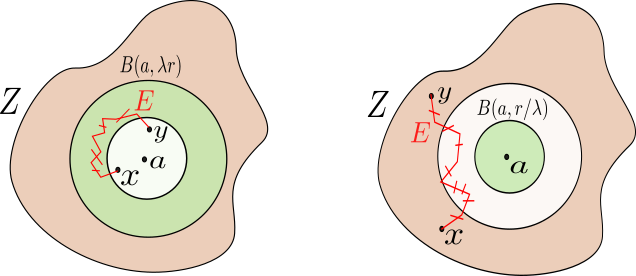}
            \caption{LLC$_1$ and LLC$_2$ conditions, respectively.}
            \label{fig:my_label}
        \end{figure}
     \item $(\lambda$-LLC$_2)$ If $B(a,r)$ is a ball in $Z$ and $x,y \in Z \setminus B(a,r)$, then there is a continuum $E \subset Z \setminus B(a, {r}/{\lambda})$ containing $x$ and $y$.
\end{itemize}
\end{definition}

Recall that a \textit{continuum} is a compact and connected set that has more than one point.

Roughly speaking, the LLC condition excludes cusps from the geometry of a metric space. The term \textit{linearly locally connected} (LLC) originates from the following property: Let $(X, d)$ be a metric space satisfying the $\lambda$-LLC$_1$ condition, and consider a point $x \in X$ and a radius $r > 0$. If $C(x)$ denotes the connected component of the ball $B(x, r)$ that contains $x$, then the following inclusion holds:

$$
B(x, r/\lambda) \subseteq C(x) \subseteq B(x, r).
$$

The following lemma states that linearly local contractibility implies the LLC (Linearly Locally Connected) condition for compact connected topological $ n $-manifolds, and is equivalent to it when $ n = 2 $.

 \begin{lemma}[\cite{bonk2002quasisymmetric}]
Suppose  $Z$  is a metric space which is a compact connected topological $n$-manifold. Then:
\begin{enumerate}
     \item If $Z$ is $\lambda$-linearly locally contractile, then $Z$ is $\lambda'$-LLC for each $\lambda'>\lambda$.
     \item If $n=2$ and $Z$ is LLC, then $Z$ is linearly locally contractible. The linear local contractibility constant depends on $Z$ and not just on the LLC constant.
\end{enumerate}
\end{lemma}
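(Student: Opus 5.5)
For part 1, I would build the LLC$_1$ continua directly from the contraction. Let $x,y\in B(a,r)$, and let $H\colon B(a,r)\times[0,1]\to B(a,\lambda r)$ be the contraction, with $H(\cdot,1)\equiv p$. The tracks $t\mapsto H(x,t)$ and $t\mapsto H(y,t)$ are paths from $x$ and from $y$ to the common point $p$. The union of their images is a compact connected set in $B(a,\lambda r)\subset B(a,\lambda' r)$ containing $x$ and $y$. If this set degenerates to a single point, I would add a short path in the manifold $Z$ near $x$. This handles balls with $r\le \operatorname{diam}Z/\lambda$. For larger $r$ we have $\lambda' r>\operatorname{diam}Z$, so $B(a,\lambda' r)=Z$ and $Z$ itself is the continuum.

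For LLC$_2$, take $x,y\notin B(a,r)$ and choose $s$ with $r/\lambda'<s<r/\lambda$. Put $K=\overline B(a,r/\lambda')$ and $L=\overline B(a,\lambda s)$, so that $K\subset B(a,s)$ and $L\subset \overline B(a,r)$ up to boundary effects; I would shrink slightly so that $x,y\notin L$. The inclusion $K\hookrightarrow L$ factors up to homotopy through a point, via the contraction of $B(a,s)$. Hence the induced map $\check H^{n-1}(L;\mathbb Z/2)\to\check H^{n-1}(K;\mathbb Z/2)$ vanishes in reduced cohomology. By Alexander--Lefschetz duality in the closed $n$-manifold $Z$ (with $\mathbb Z/2$ coefficients, to avoid orientability), reduced $H_0(Z\setminus K)$ is controlled by $\check H^{n-1}(K)$, naturally in $K$. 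Therefore the map $\tilde H_0(Z\setminus L)\to\tilde H_0(Z\setminus K)$ is zero, so $x$ and $y$ lie in the same component of the open set $Z\setminus K$. Components of open subsets of manifolds are path connected, so a path from $x$ to $y$ in $Z\setminus K\subset Z\setminus B(a,r/\lambda')$ gives the continuum. The main obstacle is setting up this duality cleanly: handling the $H^n(Z)$ term, the case $n=1$, and continuity of Čech cohomology so that one can pass to neighbourhoods. I would state the duality as the exact sequence of the pair $(Z,K)$ combined with $H^q(Z,K)\cong H_{n-q}(Z\setminus K)$.

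For part 2 ($n=2$), by compactness I would first fix $\delta>0$ such that every set of diameter $<\delta$ lies in an open set $D$ homeomorphic to a disk. I would then take the contractibility constant $\lambda'\ge \max(\lambda^2,\operatorname{diam}Z/\delta)$ (times a harmless factor). This is exactly why the constant depends on $Z$ and not only on $\lambda$: balls with $r\le \operatorname{diam}Z/\lambda'$ are then small, and large balls, where $Z$ need not be contractible (e.g. a torus), are excluded.

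For a small ball $B(a,r)\subset D$, let $W$ be the component of $B(a,\lambda r)$ containing $B(a,r)$, which exists by LLC$_1$ with constant $\lambda$. Let $\Omega$ be $W$ together with all components of $D\setminus \overline W$ that are relatively compact in $D$. By LLC$_2$, such a "hole" cannot contain a point far from $a$: otherwise that point and a point near $\partial D$ would be joined outside $B(a,r/\lambda)$, contradicting the fact that the hole is enclosed by $W$. Hence $\Omega\subset B(a,\lambda^2 r)$. The set $\Omega$ is a connected open subset of the plane with no bounded complementary components, so it is simply connected. By the Riemann mapping theorem it is homeomorphic to a disk, and therefore contractible. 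Restricting a contraction of $\Omega$ to $B(a,r)$ gives the required homotopy inside $B(a,\lambda' r)$. The delicate point here is making the hole-filling argument rigorous with LLC$_2$, namely showing that filled holes stay within distance $\lambda^2 r$.
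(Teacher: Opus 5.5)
The paper gives no proof of this lemma; it quotes it from Bonk--Kleiner. Your outline follows the standard route: contraction tracks for LLC$_1$, Alexander duality for LLC$_2$, and hole-filling in a coordinate disk for part 2. Part 1 is sound, and the obstacle you flag there is not real. You only need two facts: naturality of $H_1(Z,Z\setminus K;\mathbb{Z}/2)\cong\check H^{n-1}(K;\mathbb{Z}/2)$ in $K$, and surjectivity of the connecting map $H_1(Z,Z\setminus L)\to\tilde H_0(Z\setminus L)$. So for $n\ge 2$ the $H_1(Z)$ term never enters; for $n=1$, use reduced $\check H^0$. Two smaller simplifications also hold. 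No shrinking is needed, because $\lambda s<r$ already gives $x,y\notin L$. And $s<r/\lambda\le\operatorname{diam}Z/\lambda$ whenever $Z\setminus B(a,r)\neq\emptyset$, so the contraction of $B(a,s)$ is always available.

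The genuine problem is in part 2, in the definition of $\Omega$.

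\emph{Holes of $W$ with empty interior are not filled.} Filling only the relatively compact components of $D\setminus\overline W$ leaves such holes in place, and they do occur. Suppose $p$ is a strict local maximum of $d(a,\cdot)$ with $d(a,p)=\lambda r$, and $W$ contains a punctured neighbourhood of $p$; this happens near the far pole of a small round bubble on $Z$. Then $p\in\overline W\setminus W$, nothing is filled at $p$, and $\Omega$ is a punctured domain. Your claim that $\Omega$ has no bounded complementary components is then false, $\Omega$ need not be contractible, and there is no contraction of $\Omega$ to restrict.

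\emph{Fix.} Fill the complementary components of $W$ itself. Let $\widehat D=D\cup\{\infty\}\cong S^2$, let $C_\infty$ be the component of $\widehat D\setminus W$ containing $\infty$, and set $\Omega=\widehat D\setminus C_\infty$. This set is open, connected, contained in $D$, and has connected complement in $\widehat D$, so it is simply connected.

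\emph{Scale in the LLC$_2$ step.} As you phrased it, a continuum that avoids only $B(a,r/\lambda)$ may run through $W\subset B(a,\lambda r)$, so no contradiction follows. Instead:
\begin{enumerate}
\item Require $B(a,\lambda^2 r)\subset D$.
\item Take $z\in\Omega$ with $d(a,z)\ge\lambda^2 r$, and any $w\in Z\setminus D$; this set is nonempty because $D$ is not compact.
\item Apply LLC$_2$ to $B(a,\lambda^2 r)$ to get a continuum $E\subset Z\setminus B(a,\lambda r)\subset Z\setminus W$ through $z$ and $w$.
\item The map $q\colon Z\to\widehat D$ that is the identity on $D$ and sends $Z\setminus D$ to $\infty$ is continuous. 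Hence $q(E)$ is a connected subset of $\widehat D\setminus W$ containing $z$ and $\infty$.
\item So $z\in C_\infty$, contradicting $z\in\Omega$. This proves $\Omega\subset B(a,\lambda^2 r)$.
\end{enumerate}

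\emph{Constant.} You need $2\lambda^2 r<\delta$ for every $r\le\operatorname{diam}Z/\lambda'$. So take $\lambda'>\max\{\lambda^2,\,2\lambda^2\operatorname{diam}Z/\delta\}$; the dependence is multiplicative in $\lambda^2$ and $\operatorname{diam}Z/\delta$, not a maximum of them. With these repairs the argument goes through.
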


The following theorem provides a uniformization result for metric spheres. This is analogous to the classical uniformization theorem for Riemann surfaces, which states that every simply connected Riemann surface is conformally equivalent to the unit disk, the complex plane, or the Riemann sphere.

\begin{theorem}[\cite{bonk2002quasisymmetric}]\label{teo1}
Let $Z$ be an Ahlfors 2-regular metric space homeomorphic to $S^2$. Then $Z$ is quasisymmetric to $S^2$ if $Z$ is linearly locally contractible.
\end{theorem}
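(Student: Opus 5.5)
The statement is the uniformization theorem of Bonk and Kleiner, and I would reproduce the strategy of \cite{bonk2002quasisymmetric}: build discrete approximations of $Z$, map them to $S^2$ by circle packings, and extract a quasisymmetric limit.

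First, since $Z$ is a compact connected $2$-manifold, the preceding lemma shows that linear local contractibility gives the $\lambda$-LLC condition for some $\lambda$. From then on I would use only Ahlfors $2$-regularity and LLC.

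For each $k$, I would take a maximal $2^{-k}$-separated set $V_k\subset Z$. Ahlfors regularity makes $Z$ doubling, so each point lies in boundedly many of the balls $B(v,2\cdot 2^{-k})$. Using LLC and the fact that $Z$ is a topological sphere, I would then produce a triangulation $T_k$ of $S^2$ with vertex set $V_k$, combinatorially modelled on the nerve of these balls. Adjacent vertices would be at distance comparable to $2^{-k}$, and vertex degrees would be uniformly bounded. The Koebe--Andreev--Thurston theorem gives a circle packing on $S^2$ with nerve $T_k$. This defines $f_k\colon V_k\to S^2$ sending each vertex to the centre of its disk, normalized by a M\"obius transformation so that three fixed, well-separated points of $Z$ go (approximately) to three fixed points of $S^2$.

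The core step is to show the $f_k$ are uniformly quasisymmetric: there is a single $\eta$ with $|x-a|\le t|x-b|\Rightarrow|f_k(x)-f_k(a)|\le\eta(t)|f_k(x)-f_k(b)|$ for all $k$ and all vertices. I would argue via discrete (combinatorial) conformal modulus. For disjoint continua $E,F$ with relative distance $\Delta(E,F)=\operatorname{dist}(E,F)/\min(\operatorname{diam}E,\operatorname{diam}F)$, the aim is
\[
\phi(\Delta(E,F))\le \operatorname{Mod}_{T_k}(E,F)\le \psi(\Delta(E,F)),
\]
with $\phi,\psi$ independent of $k$. The upper bound comes from Ahlfors $2$-regularity: the weight $\rho(v)=2^{-k}/\operatorname{dist}(v,E)$ on an annulus around the smaller continuum is admissible, and its total mass is controlled by counting vertices in dyadic annuli, since there are about $r^2/4^{-k}$ vertices in a ball of radius $r$. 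On the packing side, the modulus of a circle packing is comparable to the classical modulus on $S^2$, which satisfies the same two-sided relative-distance estimates. Comparing the two sides yields quasisymmetry via the standard three-point argument.

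The main obstacle I expect is the lower bound $\phi$, a Loewner-type estimate. I would obtain it from planar duality: in a triangulated sphere, the modulus of curves joining $E$ to $F$ and the modulus of the dual family of curves separating them are reciprocal up to constants, in the style of Cannon and Schramm. So it suffices to bound the separating modulus from above. By LLC, every curve separating $E$ from $F$ has diameter, hence length, at least a constant times $\min(\operatorname{diam}E,\operatorname{diam}F)$; LLC$_2$ keeps such curves from escaping, and the separating family lives in a region of controlled size. Ahlfors regularity bounds the total mass of a constant weight on that region, which gives the required upper bound.

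Once the estimates are in place, the normalization plus uniform quasisymmetry gives equicontinuity. Arzel\`a--Ascoli then provides a subsequence converging uniformly to $f\colon Z\to S^2$, which is $\eta$-quasisymmetric because the three-point condition passes to limits. Finally, $f$ is injective by the quasisymmetric bound and surjective by a degree argument on the approximating maps, so it is a quasisymmetric homeomorphism onto $S^2$.
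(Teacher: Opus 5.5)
The paper gives no proof of this statement. It is quoted from Bonk--Kleiner \cite{bonk2002quasisymmetric} and used only as a black box to deduce the corollary for $\mathcal{L}^*$. Your outline follows Bonk--Kleiner's own architecture: LLC from linear local contractibility, triangulated approximations of $Z$, Koebe--Andreev--Thurston packings, two-sided combinatorial modulus bounds, and a normalized limit. As a plan it is the right one.

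\textbf{The gap: the construction of $T_k$.} This step is asserted rather than supplied, and it cannot work as literally described. The nerve of $\{B(v,2\cdot 2^{-k})\}_{v\in V_k}$ is not a surface. Already for the round sphere, the $2^{-k}$-balls about the net points within $4\cdot 2^{-k}$ of $v$ must cover $B(v,3\cdot 2^{-k})$. Counting area, every vertex of the nerve's $1$-skeleton then has degree at least $8$ for small $2^{-k}$, whereas a finite planar graph always has a vertex of degree at most $5$. So no triangulation of $S^2$ with vertex set $V_k$ is ``modelled on'' that nerve.

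What the argument actually needs is a decomposition of $Z$ into tiles with the following properties:
\begin{itemize}
\item each tile is a closed topological disk;
\item each tile contains a ball of radius $\asymp 2^{-k}$ and is contained in a ball of radius $\asymp 2^{-k}$;
\item any tile meets only boundedly many others;
\item the adjacency graph can be completed to a simplicial triangulation of $S^2$ of bounded degree.
\end{itemize}
Building such tiles is where the sphere topology and both LLC conditions really enter, and it needs its own argument. For instance, Voronoi-type cells around $V_k$ are in general neither connected nor simply connected. They must be repaired using LLC$_1$ for connectivity at a controlled scale, and LLC$_2$ (small Jordan curves bound small disks) to fill holes.

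The tiling is also what makes your duality step meaningful. There you apply an LLC diameter bound to a vertex set separating $E$ from $F$ in the graph $T_k$. That is legitimate only if the union of the corresponding tiles separates $E$ from $F$ in $Z$. This holds for adjacency graphs of tilings, but not for an abstract combinatorial model.

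\textbf{Smaller points, all fixable within your plan.}
\begin{itemize}
\item In the upper bound, the weight must be divided by $\log\Delta(E,F)$. As written its mass grows like $\log\Delta$, so you get no $\psi$ with $\psi(\Delta)\to 0$.
\item In the lower bound, separating sets are not confined to a region of controlled size. Instead, LLC$_1$ gives a continuum joining some $e\in E$ to some $f\in F$ inside $B(e,C\lambda\operatorname{dist}(E,F))$, and every separating set must meet it. Unicoherence of the sphere lets you pass to a separating component. The LLC$_2$ bound $\operatorname{diam}\ge m/(2\lambda)$, with $m=\min(\operatorname{diam}E,\operatorname{diam}F)$, then gives a piece of diameter $\gtrsim m/\lambda$ inside a ball of radius $\asymp(1+\Delta)m$. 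That ball is where the constant weight $\asymp \lambda 2^{-k}/m$ should live.
\item In the three-point argument you have no a priori control of $\operatorname{diam} f_k(E)$. So choose the test continua in $S^2$ and pull them back through the packing: a short arc through $f_k(x)$ and $f_k(b)$, and an arc from $f_k(a)$ to a far point avoiding a large ball about $f_k(x)$. What you then use is your lower bound in $Z$ against the area upper bound for the packing.
\item Surjectivity of the injective limit follows from invariance of domain; no degree argument is needed.
\end{itemize}
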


\section{Quasisymmetric Uniformization of paper spaces}\label{sec-surfL}

%This section introduces a special class of \emph{plain paper spaces}, which are the main objects of study in this paper. A plain paper space is constructed from a pair $ (P, \mathcal{P}) $, where $ P $ is a finite union of planar polygons in the complex plane, and $ \mathcal{P} $ is a collection of unlinked pairings on the boundary $ \partial P $. These pairings are disjoint in their interiors and follow specific structural patterns.

This section introduces a special class of \emph{paper spaces}, which constitute the central objects of study in this work. Recall that a paper space is constructed from a pair $(P, \mathcal{P})$, where $P$ is a multipolygon and $\mathcal{P}$ is a collection of pairings on the boundary $\partial P$. These pairings follow specific structural patterns, whose precise description will be given within this section.

%Two types of pairings are considered: segment pairings, which identify segments of equal length either along the same side or across different sides of the multipolygon; and Type~$ W $ identifications, which involve an infinite sequence of decreasing segments that are alternately paired along an edge.

We consider two types of pairings: segment pairings, previously introduced as identifications of segments of equal length either along the same side or across different sides of the multipolygon; and Type~$W$ identifications, consisting of an infinite sequence of decreasing segments alternately paired along an edge.

Figures~\ref{figsupL} and \ref{4ret} illustrate examples of paper spaces constructed using these pairings. In one case, a polygon features a Type~$ W $ identification on one side and segment pairings on the others. In another, four rectangles are joined to form a multipolygon, with both finite and infinite identifications along their edges.

To establish notation, we denote the resulting plain paper space as the metric quotient $ \mathcal{L}_{\mathcal{P}} := P / d_P^{\mathcal{P}} $. The associated projection is written as $ \pi_{\mathcal{P}} \colon P \to \mathcal{L}_{\mathcal{P}} $ whose restriction to the interior of $P$ is a homeomorphism onto the complement of the glued boundary $ G_{\mathcal{P}} := \pi_{\mathcal{P}}(\partial P) $. This notation will be used throughout the remainder of the paper.

This construction gives rise to a class of surfaces denoted by $ \mathcal{L} $, consisting of all spaces $ \mathcal{L}_{\mathcal{P}} $ arising as metric quotients from such pairings. In addition, we consider a distinguished subclass of $\mathcal{L}$, denoted by $\mathcal{L}^*$, formed by the plain paper spaces contained in $\mathcal{L}$. Note that a plain paper folding scheme is defined from a single polygon. However, situations analogous to Figure \ref{fig:papel3} may occur, in which a union of multipolygons effectively forms a single polygon. In this case, when viewed as one polygon, the pairings $\mathcal{P}$ are unlinked, so the decomposition into multiple polygons can be regarded as a single polygon with unlinked pairings.

\begin{figure}[h!]
    \centering
    \includegraphics[width=0.5\linewidth]{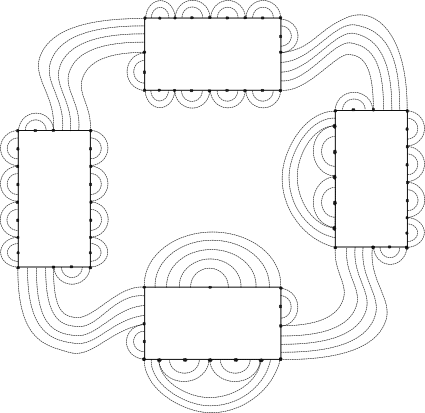}
    \includegraphics[width=0.5\linewidth]{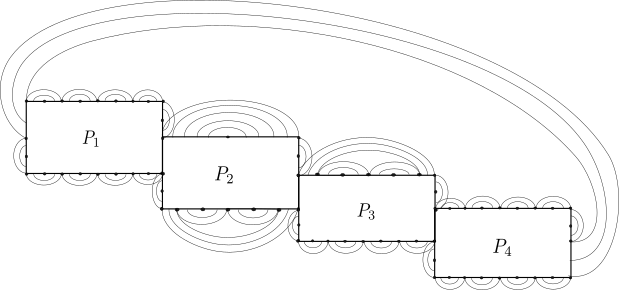}
    \caption{Paper space obtained as the quotient of four rectangles.}
    \label{fig:papel3}
\end{figure}

The remainder of this section is dedicated to the proof of our main theorem, which we restate below. In Section 3.1, we introduce the class of surfaces under consideration. Section 3.2 analyzes the geometry of metric balls in these new spaces. Finally, in Section 3.3, we verify that these spaces satisfy the conditions of being Ahlfors 2-regular and linearly locally contractible.

\begin{theorem}\label{teo_principal}
The surfaces in the class $ \mathcal{L} $ satisfy the following properties:
\begin{itemize}
    \item[(a)] Ahlfors 2-regularity;
    \item[(b)] Linear local contractibility.
\end{itemize}
As a consequence, the surfaces of type $ \mathcal{L}^* $ are quasisymmetrically equivalent to the 2-sphere.
\end{theorem}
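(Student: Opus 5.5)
The plan is to derive both (a) and (b) from one structural lemma comparing metric balls of $\mathcal{L}_{\mathcal{P}}$ with Euclidean data in $P$. The final assertion then follows from Theorem~\ref{teo1}. Indeed, for a plain scheme in $\mathcal{L}^*$ the quotient is a topological sphere (topological structure of a plain paper folding). Once (a) and (b) are established, Bonk--Kleiner applies directly.

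\textbf{Ball lemma.} There are constants $c_1,c_2>0$, depending only on the scheme, such that for every $x\in\mathcal{L}_{\mathcal{P}}$ and every $0<r\le\operatorname{diam}\mathcal{L}_{\mathcal{P}}$ the following hold.
\begin{itemize}
\item[(i)] The preimage $\pi_{\mathcal{P}}^{-1}(\overline{B}(x,r))$ contains a Euclidean sector (or half-disc) of $P$ of radius $c_1 r$, with angle bounded below.
\item[(ii)] The preimage $\pi_{\mathcal{P}}^{-1}(\overline{B}(x,r))$ is contained in a union of at most $N$ Euclidean discs of radius $r$ in $P$.
\end{itemize}

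Part (i) is immediate because $\pi_{\mathcal{P}}$ is $1$-Lipschitz. Pick any $\tilde x\in\pi^{-1}(x)$. Every point of $P$ within $d_P$-distance $r$ of $\tilde x$ lies in the ball, and polygons have interior angles bounded below.

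Part (ii) is the real content. I would bound the number of \emph{distinct} points of $\partial P$ that an $R$-chain of length $<r$ can visit, in the sense that they are pairwise far apart in $P$. For finitely many basic segment pairings, a compactness argument gives a uniform bound. This applies to cone points, whose cone angle is finite. It also applies to points away from a vertex, where only the two sides of one pairing are reachable.

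The main obstacle is the accumulation point of a Type~$W$ identification. Near it, an $R$-chain of length $r$ can zigzag through many folds. However, each fold of a Type~$W$ pattern lies along the same edge, and the folds decay geometrically. Folding a segment onto an adjacent one never moves a point farther than the fold length. So all points reachable from $\tilde x$ by chains of length $<r$ stay within distance $Cr$ of $\tilde x$ along that edge. I would prove this by induction on the chain, using geometric decay to sum the possible jumps. This gives $\pi^{-1}(\overline B(x,r))\subset B_P(\tilde x,Cr)\cup(\text{finitely many other discs})$.

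\textbf{Ahlfors regularity.} Away from singular points $\mathcal{L}_{\mathcal{P}}$ is a conic-flat surface, so it is locally isometric to $P$ off a null set. Hence $\mathcal{H}^2_{\mathcal{L}}(\pi(E))\le\mathcal{H}^2_P(E)$ because $\pi$ is $1$-Lipschitz. In the other direction, $\mathcal{H}^2_{\mathcal{L}}(\pi(E))\ge\mathcal{H}^2(E\cap\operatorname{Int}P)$, since $\pi$ restricted to the interior is a local isometry and the scar $G_{\mathcal{P}}$ is a countable union of segments, hence of measure zero. Combining these with (i) and (ii) yields $C^{-1}r^2\le\mathcal{H}^2(\overline B(x,r))\le Cr^2$. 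Lemma~\ref{equivhausdorff} handles any bi-Lipschitz comparison between $d_P$ and the Euclidean metric on $P$.

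\textbf{Linear local contractibility.} I would verify LLC and then use the lemma from \cite{bonk2002quasisymmetric} in dimension $2$. Alternatively, one can contract directly.
\begin{itemize}
\item[] $\text{LLC}_1$ follows because $d_S$ is strictly intrinsic. Any $y,z\in B(x,r)$ are joined through $x$ by geodesics of length $<r$, giving a continuum in $B(x,2r)$.
\item[] For $\text{LLC}_2$ and contractibility, I would use radial contraction toward $\tilde x$ in the preimage. Take $H_t$ to be the straight-line homotopy in each sector and transfer it to $\mathcal{L}_{\mathcal{P}}$ via Theorem~\ref{univ.thm}. This is valid wherever $H_t$ respects the pairings, which holds for folds when contraction is toward the folding point or along the edge.
\end{itemize}
Near Type~$W$ accumulation points, the set reachable is a thin neighbourhood of one edge with geometrically decaying folds. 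I would contract it first onto the edge and then along the edge to the accumulation point. The ball-lemma estimate (ii) keeps all tracks inside $B(x,\lambda r)$ with $\lambda$ depending only on $C$ and $N$. The complement of a small ball is connected by the same flatness, so paths can be pushed outward radially.
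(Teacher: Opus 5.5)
Reducing (a) to the covering statement (ii), with the $1$-Lipschitz/local-isometry comparison of $\mathcal{H}^2$, is a cleaner route than the paper's case-by-case area bookkeeping, and (i) is fine. But the step you flag as the main obstacle does not go through as argued.

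In a Type~$W$ pattern only $\beta_i,\beta_i'$ are folds. Each $\alpha_i$ is glued to $\alpha_i'$ \emph{across} the accumulation point. Write $\bar A_i=\sum_{j\ge i}a_j$ and $\bar B_i=\sum_{j\ge i}b_j$. The point $\tilde p_i'$, the common endpoint of $\alpha_{i-1}'$ and $\alpha_i'$, lies at distance $\bar A_i$ below $\tilde p_\infty$. Its partners $\tilde p_i,\tilde q_i$, the two ends of $\beta_{i-1}\cup\beta_{i-1}'$, lie at distances $\bar A_i+2\bar B_{i-1}$ and $\bar A_i+2\bar B_i$ above it. So the jumps are governed by $b$-tails, not by fold lengths. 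Your claim that reachable points stay within $Cr$ of $\tilde x$ already fails for $\tilde x=\tilde p_i'$ as $r\to 0$.

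Bounding the number and size of the resulting clusters of centres requires comparing the $b$'s with the $a$'s over runs of indices, e.g.\ $b_i\le Ka_i$ for all $i$. Your common-ratio geometric decay supplies this, but geometric decay is not part of the definition of $\mathcal{L}$, which imposes only $\sum a_i+\sum b_i=\ell/2$. The paper's proof leans on the same input through its constants $K'$ and $K''$, i.e.\ $\sum b_i\le K'\sum a_i$ over tails and over finite runs. It asserts these follow from summability, which they do not.

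Some such hypothesis is genuinely necessary. Take $a_j=\kappa 4^{-j}$, $b_j=\kappa 2^{-j}$ ($\kappa$ normalizing the sum), $p_\infty=\pi(\tilde p_\infty)$, and $r=2\bar A_i=\tfrac83\kappa 4^{-i}$. For $i\le j\le 2i$ we have $d(p_\infty,\pi(\tilde p_j))\le\bar A_j\le r/2$. Hence $\overline B(p_\infty,r)$ contains the half-discs of radius $r/2$ about $\tilde p_j$ for $i\le j\le 2i$. Consecutive centres are more than $4\kappa 2^{-j}\ge 4\kappa 4^{-i}>r$ apart, so these half-discs are disjoint. Therefore $\mathcal{H}^2(\overline B(p_\infty,r))\gtrsim i\,r^2\asymp r^2\log(1/r)$, the tight-horseshoe phenomenon, and (a) fails.

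So you must state a comparability hypothesis explicitly and replace the chain induction by a clustering argument. The preimage of $\overline B(x,r)$ is $B_P(\tilde x,r)$ together with half-discs of radius $r-d(x,\pi(\tilde y))$ about boundary points $\tilde y$ with $d(x,\pi(\tilde y))\le r$. Under $b_i\le Ka_i$ these centres fall into $O(1)$ groups of diameter $O(r)$.

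The same gluing also breaks your contraction for (b). Neither radial contraction toward $\tilde x$ nor sliding ``along the edge'' respects $\alpha_i\leftrightarrow\alpha_i'$, which is a reflection about a height depending on $i$. So Theorem~\ref{univ.thm} does not apply. Instead, push the preimage horizontally onto the edge; this fixes $\partial P$ and stays inside the ball. Then contract the image inside the scar, which near this side is a tree, along tree geodesics.
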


%This section introduces a special class of \emph{paper spaces}, which constitute the main objects of study throughout this paper. The construction is exemplified by the paper space shown in Figures~\ref{figsupL} and \ref{4ret}.

\subsection{The construction}Let $(P, \mathcal{P})$ be a paper folding scheme, whose pairings follow specific structural patterns as described below.

\begin{figure}[!h]
    \centering
    \includegraphics[scale=8]{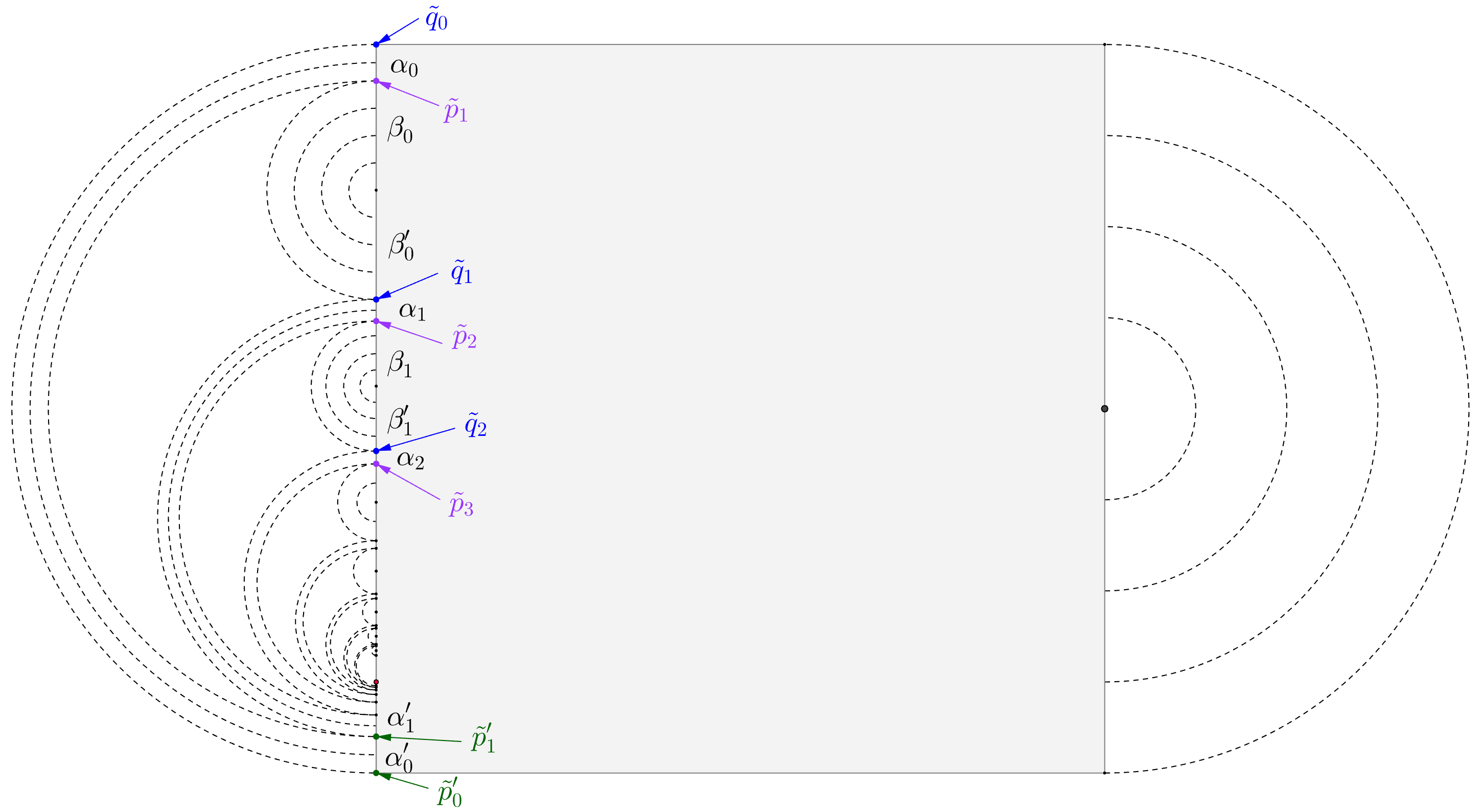}
    \caption{An example of a paper space, belonging to the class of surfaces denoted by $ \mathcal{L}^* $.}
    \label{figsupL}
\end{figure}

The pairings considered fall into two categories: \emph{basic segment pairings} (Figure~\ref{fig:seg_pairing}), which identify segments of equal length either along the same side or across different sides of the multipolygon, and \emph{Type~$W$ identifications}, defined below. Throughout this paper, we assume that the collection $ \mathcal{P} $ consists of a finite number of basic segment pairings and a finite collection of type $W$ identifications, which is a specific type of an infinite collection of pairings.
%Whenever an infinite number of pairings appears along a side, they are required to be of Type~$W$.

Without loss of generality, we assume in the following discussion that the pairing types below are positioned along a vertical edge of the multipolygon. An analogous reasoning applies to horizontal edges. In such cases, references to the ``upper'' and ``lower'' vertices should be interpreted as referring to the ``left'' and ``right'' vertices, respectively.

\bigskip

\paragraph{\textit{Infinite Type~$W$ Identifications.}}% Let $ \lambda > 1 $ be a real number. 
Consider two real sequences $ \{a_i\}_{i \in \mathbb{N}} $ and $ \{b_i\}_{i \in \mathbb{N}} $ satisfying
$$
\sum_{i=0}^{\infty} a_i + \sum_{i=0}^{\infty} b_i = \frac{\ell}{2}, 
$$where $\ell$ denotes a constant.
%and assume they decrease geometrically:
%$$
%a_k = a_0 \lambda^{-k}, \quad b_k = b_0 \lambda^{-k}.
%$$
The infinite pairing pattern along one side of the polygon $ P $ of length $\ell$ is described as follows (Figure~\ref{figsupL}):
\begin{enumerate}[(i)]
    \item $\alpha_0$: the segment of length $ a_0 $ starting at the upper-left vertex $ \tilde{q_0} $ is paired with $ \alpha'_0 $, a segment of the same length ending at the lower-left vertex $ \tilde{p}_0' $;
    \item $\beta_0$: the segment of length $ b_0 $ starting at the endpoint of $ \alpha_0 $, denoted $ \tilde{p}_1 $, is paired with $ \beta'_0 $, starting at the endpoint of $ \beta_0 $;
    \item $\alpha_1$: the segment of length $ a_1 $ starting at the endpoint of $ \beta'_0 $, denoted $ \tilde{q_1} $, is paired with $ \alpha'_1 $, starting at the endpoint of $ \alpha'_0 $, denoted $ {\tilde{p}_1}' $;
    \item And so on, so that the left side of $ P $ is covered%---except for a vertical gap of height $ \sum a_i $---
    from top to bottom by the sequence $ \alpha_0, \beta_0, \beta'_0, \alpha_1, \beta_1, \beta'_1, \dots $, and from bottom to top by $ \alpha'_0, \alpha'_1, \alpha'_2, \dots $.
\end{enumerate}

Each segment $ \alpha_i $ is glued to its corresponding segment $ \alpha'_i $, and each segment $ \beta_i $ to its corresponding segment $ \beta'_i $.

\bigskip

\paragraph{\textit{Finite Type~$W$ Identifications.}} This is a finite analogue of the Type~$W$ construction. Here, finite sequences $ \{a_i\}_{i=0}^{n} $ and $ \{b_i\}_{i=0}^{n} $ are considered, satisfying
$$
\sum_{i=0}^{n} a_i + \sum_{i=0}^{n} b_i = \frac{\ell}{2},
$$
 with pairings constructed similarly but terminating after a finite number of segments, where $\ell$ denotes a constant that represents a side of the polygon.

\bigskip

We note that Finite Type~$W$ identifications, although presented separately, can be interpreted as finite unions of basic segment pairings. Likewise, Infinite Type~$W$ identifications are naturally understood as infinite collections of such pairings.

%Figure~\ref{figsupL} shows a paper space constructed using the types of pairings described above. In this example, the left side exhibits Type~$W$ identifications; the right side is folded in half via a basic segment pairing; and the top and bottom sides are glued entirely to one another via a basic segment pairing.

Figures~\ref{figsupL} and \ref{4ret} illustrate paper spaces constructed using the types of pairings described above. In Figure~\ref{figsupL}, the left side exhibits Type~$W$ identifications; the right side is folded in half by a basic segment pairing; and the top and bottom sides are glued to each other through another basic segment pairing. In Figure~\ref{4ret}, three rectangles and a pentagon are combined to form a multipolygon by identifying four of their sides using basic segment pairings. Along the sides of the rectangles, we observe, for instance, finite basic segment pairings that fold segments in half, as well as infinite Type~$W$ identifications on two of the sides. Other sides may also be folded in half, depending on the configuration.

%Now, consider the paper space associated with the paper folding $ (P, \mathcal{P}) $, defined as the metric quotient $ \mathcal{L}_\mathcal{P}:= P / d_P^{\mathcal{P}} $. Since $ (P,d) $ is a compact metric space, it follows that the metric quotient is homeomorphic to the topological quotient $ P / {\sim_{\mathcal{P}}} $. Therefore, the paper space $ (\mathcal{L}_\mathcal{P}, d^{\mathcal{P}}) $ is itself a compact metric space.

\begin{figure}[!h]
    \centering
    \includegraphics[scale=0.8]{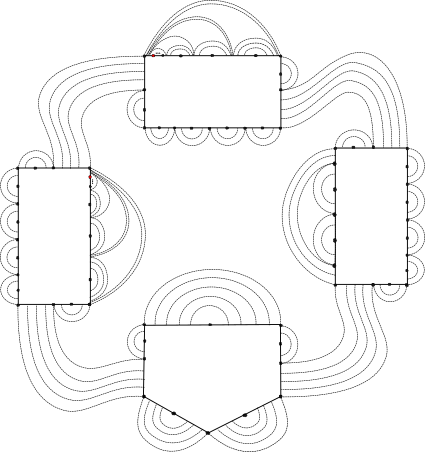}
    \caption{A paper space in the class $\mathcal{L}$ but not in $\mathcal{L}^*$, constructed as the quotient of four polygons.}
    \label{4ret}
\end{figure}

%Furthermore, let $ \pi_{\mathcal{P}} : P \rightarrow \mathcal{L}_\mathcal{P} $ denote the projection induced by the relation $ \sim_{\mathcal{P}} $. If we define
%$$
%G_{\mathcal{P}} := \pi_{\mathcal{P}}(\partial P),
%$$
%then the restriction $ \pi_{\mathcal{P}} : \operatorname{int}(P) \rightarrow \mathcal{L}_\mathcal{P} \setminus G_{\mathcal{P}} $ is a homeomorphism.

Since 
$$
\mathcal{L}_{\mathcal{P}} := P / d_P^{\mathcal{P}}
$$
denotes the metric quotient of a planar multipolygon $P$ under a collection of admissible pairings $\mathcal{P}$ on its boundary, we define the following classes of surfaces. The class $\mathcal{L}$ consists of all such metric spaces arising from paper folding schemes:
$$
\mathcal{L} := \left\{ \mathcal{L}_{\mathcal{P}} \,\middle|\, (P, \mathcal{P}) \ \text{is a paper folding scheme as described above} \right\},
$$
and the subclass $\mathcal{L}^*$ is given by those spaces obtained from plain paper folding schemes:
$$
\mathcal{L}^* := \left\{ \mathcal{L}_{\mathcal{P}} \in \mathcal{L} \,\middle|\, (P, \mathcal{P}) \ \text{is a plain paper folding scheme} \right\}.
$$

These two classes will serve as the main objects of study in the subsequent sections.

\subsection{Preimages of the balls under the projection map} \label{bolas-sem-ling}

Before proving the main theorem, it is necessary to develop a deeper understanding of the geometry of the paper spaces under consideration. More precisely, we need to analyze the shape of metric balls in these spaces. Since the identifications are made only along the boundary of the multipolygon, it is sufficient to study the \emph{preimages} of metric balls under the projection $ \pi_{\mathcal{P}} \colon P \to \mathcal{L}_\mathcal{P} $, where $ \mathcal{L}_\mathcal{P} \in \mathcal{L} $ is a paper surface (see {Theorem \ref{univ.thm}}).

This section is devoted to the study of such preimages, with a focus on three distinct cases based on the location of the center of the ball: regular interior points, accumulation (or singular)  points along the boundary, and conic points. 
%This section examines the preimages of metric balls on the surface $ \mathcal{L} $ under the projection $ \pi_\mathcal{P}: P \to \mathcal{L} $. The analysis focuses on three cases: balls centered at regular points, accumulation points, and conic points.

Recall that the identifications along the boundary of the multipolygon $ P $ can take one of the following forms:  a finite collection of basic segment pairings;  a finite collection of infinite Type~$ W $ identifications; or a finite combination of both basic segment pairings and finite collection of infinite Type~$ W $ identifications.

%To determine the shape of these metric balls, it suffices to analyze their preimages under the projection $\pi_\mathcal{P}$, since the metric quotient respects the identifications along the boundary (see {Theorem \ref{univ.thm}}). 

%To avoid ambiguity, we clarify that throughout this section, any reference to a metric ball implicitly refers to its \emph{preimage} under the projection map $ \pi_{\mathcal{P}} \colon P \to \mathcal{L}_{\mathcal{P}} $. In particular, when we refer to a \emph{conic point of angle $ k\pi $} (with $ k \in \mathbb{N} $), we are referring to one of its $ k $ preimages in the boundary domain $ P $.

To avoid ambiguity, we clarify that throughout this section, any reference to a metric ball, or simply a ball, implicitly refers to its \emph{preimage} under the projection map 
$$
\pi_{\mathcal{P}} \colon P \to \mathcal{L}_{\mathcal{P}}.
$$
In particular, when we refer to a conic point or \emph{conic point of angle $k\pi$} (with $k \in \mathbb{N}$), we are referring to one of its $k$ preimages in the boundary domain $P$. That is, if 
$x \in \mathcal{L}_{\mathcal{P}}$ is a conic point of angle $k\pi$, we denote
$$
\pi_{\mathcal{P}}^{-1}(x) = \{\tilde{x}^1, \ldots, \tilde{x}^k\} \subset P,
$$
where each $\tilde{x}^i \in P$ is also referred to as a conic point and represents a preimage of $x$ in the boundary domain. In this sense, whenever we write $\tilde{x}$, we are implicitly choosing a representative in $\pi_{\mathcal{P}}^{-1}(x)$.

We call a \emph{semi-ball}, defined as the intersection of a ball centered at a point on the boundary $ \partial P $ with the interior of the multipolygon $ P $. That is, it represents the portion of the ball that lies entirely within $ P $. We denote such a set by $ {\SB}(\tilde{p}, r) $, the \emph{semi-ball} of radius $ r $ centered at $ \tilde{{p}} \in \partial P $.

Moreover, from this point forward, we will work with the \emph{maximum metric} (also known as the $ \ell_\infty $ metric) on $ P $ instead of the Euclidean metric. This choice is made purely for convenience, as it simplifies the geometric analysis without affecting the generality of the arguments.

Before addressing the more general cases, we note the following examples. 
If the radius $r$ is sufficiently small and $\tilde{p}$ is a conic point with angle $\pi$ such that 
$B(\tilde{p},r)$ does not contain other conic points with different angles, then $B(\tilde{p},r)$ is a 
\emph{semi-ball} centered at $\tilde{p}$ with radius $r$, i.e. $B(\tilde{p},r) ={\SB}(\tilde{p},r) $.

More generally, if $\tilde{p}$ is a conic point with angle $k\pi$ and the ball $B(\tilde{p},r)$ does not 
contain any other conic points, then $B(\tilde{p},r)$ is the union of $k$ semi-balls of radius $r$, 
centered at $\tilde{p}$ and at the conic points identified with $\tilde{p}$.

For example, if 
$$
\tilde{p} \in \pi_{\mathcal{P}}^{-1}(p)=\{\tilde{p}^1,\ldots,\tilde{p}^k\},
$$
where each $\tilde{p}^i$ is a preimage of a conic point $p$ of angle $k\pi$, then
$$
B(\tilde{p},r) = {\SB}(\tilde{p}^1,r) \cup \cdots \cup {\SB}(\tilde{p}^k,r),
$$
where ${\SB}(\tilde{p}^i,r)$ denotes the semi-ball of radius $r$ centered at $\tilde{p}^i$.

\subsubsection{Accumulation case without intersection with other sides.} First, consider the case where the ball is centered at an accumulation point. In this analysis, it is necessary to assume that at least one side of the multipolygon has an infinite number of identifications, following the Type $W$ identifications.

Let $\tilde{p}_{\infty} \in P$ be an accumulation point of conic points lying on the boundary of the multipolygon $P$, and consider the ball $B(\tilde{p}_{\infty},r)$ of radius $r>0$ centered at $\tilde{p}_{\infty}$. Assume that this ball does not intersect any other edge of the multipolygon. In this situation, $B(\tilde{p}_{\infty},r)$ decomposes as a union of semi-balls measured in the $l_{\infty}$-metric.

 % Let $ B(\tilde{p}_{\infty}, r) $ be a ball centered at the accumulation point that does not intersect any other edge of the multipolygon. In this case, the balls are unions of semi-balls in the $ l_\infty $-metric.  

It is important to note that such a ball contains infinitely many (preimages of) conic points, as its center is itself an accumulation point of conic points. To describe its structure accurately, these conic points must be explicitly taken into account in its construction.

The approach is as follows: we begin by examining all conic points that are within the first semi-ball centered at the accumulation point. For each of these conic points, we compute their distance to the accumulation point. Then, at each conic point (in the same equivalence class) we construct a semi-ball whose radius corresponds to the remaining distance from the accumulation point. This iterative process is continued, accounting for the propagation of identifications through the Type~$ W $ identifications. In doing so, we ensure that the full geometry of the ball $ B(\tilde{p}_{\infty}, r) $ is recovered, including contributions from all relevant conic identifications.

Therefore, the preimage of the ball is given by
$$
B(\tilde{p}_{\infty}, r) = \bigcup_{i = s+1}^\infty B_i,
$$
where $ B_i = {\SB}(\tilde{p}_ir_i) $ for $ i \in \{s+1, s+2, \dots\} $. Here, $ \tilde{p}_{s+1}' $ denotes the endpoint of the segment $ \alpha_s' $, which the ball $ B(\tilde{p}_{\infty}, r) $ touches. The point $ \tilde{p}_i$ represents the endpoint of the segment $ \alpha_i' $, and the radius $ r_i $ of each semi-ball $ B_i $ is given by
$$
r_i = r - \sum_{n=i+1}^\infty a_n,
$$
where $ \{a_n\} $ is the sequence of segment lengths from the Type~$ W $ identification.

\begin{figure}[!h]
    \centering
    \includegraphics[scale=7]{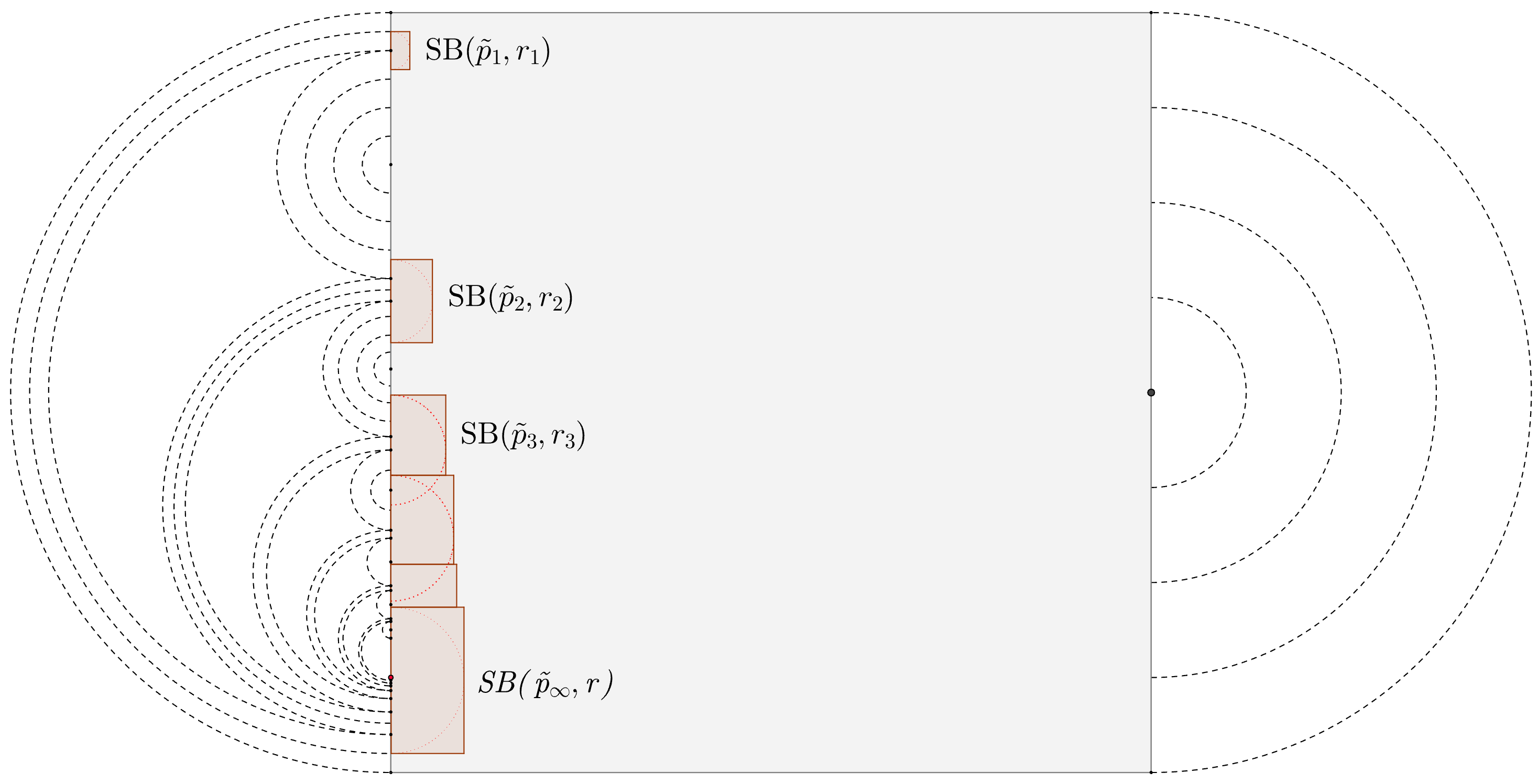}
    \caption{Example of a ball centered at the accumulation point and that does not intersect any other edge of the square.}
    \label{figexemplo}
\end{figure}

Observe that the points $ \tilde{p}_i' $, $ \tilde{p}_i$, and $ \tilde{q}_i $ all arise from the Type~$ W $ identification pattern. More specifically, the points $ \tilde{p}_i$ are the endpoints of the segments $ \beta_i $, the points $ \tilde{q}_i $ are the starting points of the segments $ \alpha_i $, and the points $ \tilde{p}_i' $ are the endpoints of the segments $ \alpha_i' $, which are paired with the $ \alpha_i $ segments. 

Although each $ \tilde{p}_i' $ is identified with both $ \tilde{p}_i$ and $ \tilde{q}_i $, it is sufficient to consider only the semi-balls centered at the points $ \tilde{p}_i$. This is because the radii satisfy $ r_i > r_{i+1} $, and therefore,
$$
\operatorname{SB}(\tilde{p}_{i+1}, r_{i+1}) \supset \operatorname{SB}(\tilde{q}_i, r_i), \quad \text{for } i \in \{s+1, s+2, \dots\}.
$$
This containment ensures that the contribution from semi-balls centered at $ \tilde{q}_i $ is already included within the larger semi-balls centered at $ \tilde{p}_{i+1} $, simplifying the geometric analysis.

\subsubsection{Conic case without intersection with other sides.} Let $p \in \mathcal{L}$ be a conic point on the quotient surface $\mathcal{L}$ and let $\tilde{p} \in P$ be one of its preimages under the projection map $\pi_{\mathcal{P}}\colon P \to \mathcal{L}$. We denote by $B(\tilde{p},r)$ the ball of radius $r>0$ centered at $\tilde{p}$ in the metric of $P$. In this context, $\tilde{p}$ lies on the boundary of $P$ and corresponds, via $\pi_{\mathcal{P}}$, to the conic point $p$ on the quotient surface.%Let $ B(\tilde{p}, r) $ denote a ball centered at a conic point $ p $ with radius $ r $, where $ \tilde{p} $ is one of the preimages in the boundary of $ P $ corresponding to a conic point on the quotient surface $ \mathcal{L} $.

%In this case, it follows that the preimages of these balls under the projection $ \pi_\mathcal{P}: P \to S_\mathcal{P} $ are unions of semi-balls in the $ l_\infty $-metric.
For this construction, it is necessary to assume that certain sides of the multipolygon are assigned identifications either of Type~$W$ or finite basic segment pairings. Hence, without loss of generality, we may consider all types of pairings. We further assume that the ball does not intersect any of the remaining sides of the multipolygon.

As previously observed, if $\tilde{p}$ is a conic point of angle $k\pi$ and the ball 
$B(\tilde{p},r)$ contains no other conic points, then $B(\tilde{p},r)$ decomposes as the union 
of $k$ semi-balls of radius $r$, centered at the preimages of $\tilde{p}$.

%Note that if the radius $ r $ is sufficiently small and $ p $ is a conic point with angle $ \pi $, such that $ B(\tilde{p}, r) $ does not contain other conic points with different angles, then $ B(\tilde{p}, r) $ is a \emph{semi-ball} centered at $ p $ with radius $ r $.

%More generally, if $ p $ is a conic point with angle $ k\pi $, and the ball $ B(\tilde{p}, r) $ does not contain any other conic point, then $ B(\tilde{p}, r) $ is the union of $ k $ semi-balls of radius $ r $, centered at $ p $ and at the conic points identified with $ p $.

If $ \tilde{p} $ is a conic point with angle $ k\pi $, and the ball $ B(\tilde{p}, r) $ contains other conic points, then $ B(\tilde{p}, r) $ is the union of $ k $ semi-balls of radius $ r $, centered at $ \tilde{p} $ and at the points identified with $ \tilde{p} $, together with the semi-balls centered at the additional conic points lying inside these initial semi-balls. The radius of each of these additional semi-balls is given by $ r $ minus the distance from their centers to the nearest previously included conic point that serves as the center of a ball.
This case represents a finite and simplified version of the situation where a ball is centered at a conic point located along a side with Type~$ W $ identifications, which will be discussed in full detail below.

Now, suppose that at least one side of the multipolygon has Type~$W$ identifications, and that the ball $ B(\tilde{p}, r) $ centered at a conic point $\tilde{p}$ contains other conic points. Then the \textit{main ball} (the first semi-ball considered) necessarily contains at least one conic point with angle $ 3\pi $. Consequently, there exists a ball centered either at the endpoint of some segment $ \alpha'_s $, corresponding to the conic point with angle $ 3\pi $ closest to the center $ \tilde{p} $, or at $ \tilde{p} $ itself, in the case where $ \tilde{p} $ is a conic point with angle $ {3\pi} $. 

Assume that this corresponding ball is centered at the point $ \tilde{p}'_{s+1} $, and that its lower boundary intersects the segment $ \alpha'_n $.

 \begin{figure}[!h]
    \centering
    \includegraphics[width=0.8
    \textwidth]{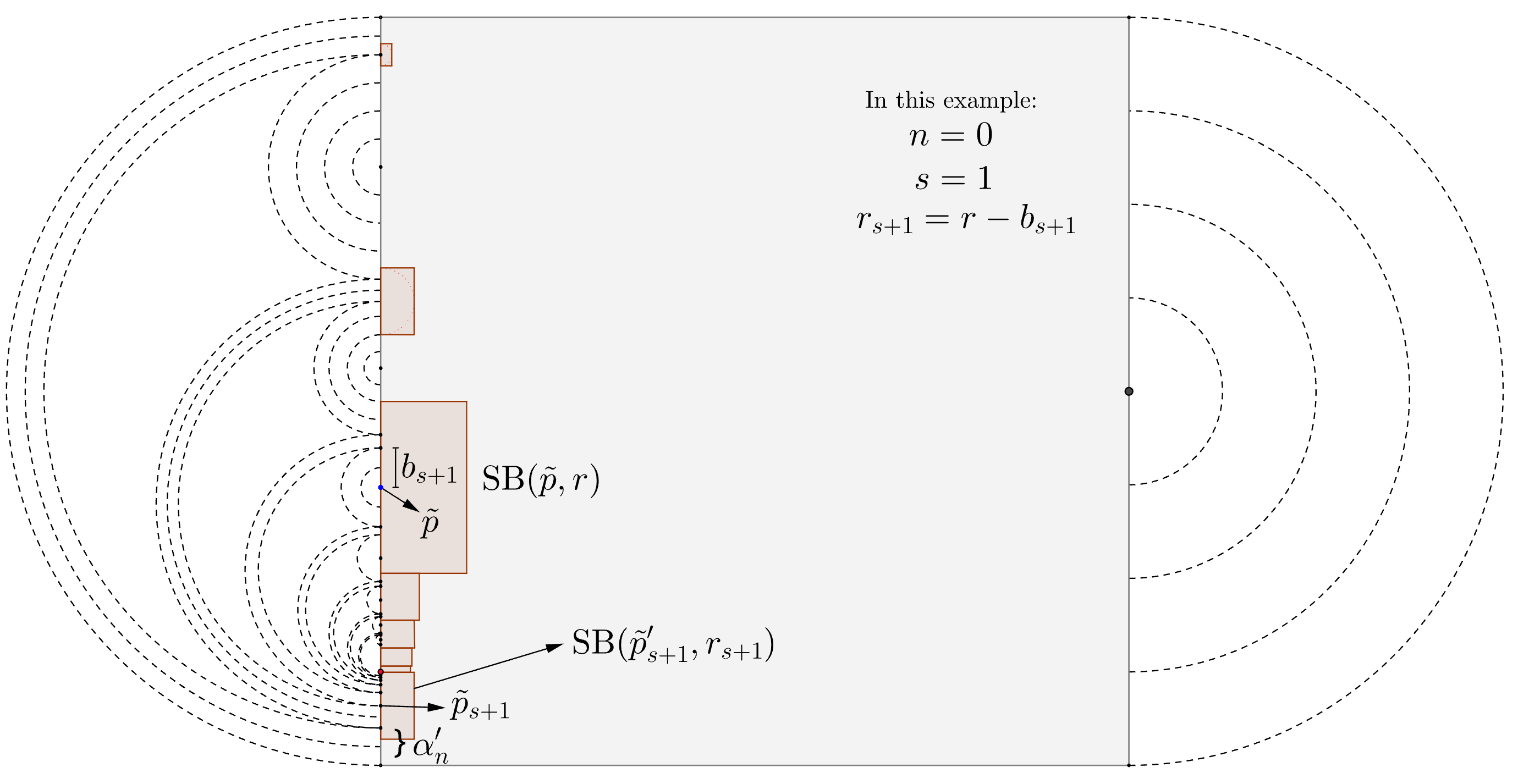}\caption{Ball centered at a conic point.}
    \label{fig8}
\end{figure}

First, we need to construct the semi-balls whose centers correspond to the identifications of $\tilde{p}_j'$ for $j\in\{n+1, \cdots, s\}$. Observe that if $ \tilde{p} $ is a conic point with an angle of $ {3\pi} $, then $ r = r_{s+1} $; otherwise, $ r_{s+1} = r - b_{s+1} $ (see Figure~\ref{fig8}).
Furthermore, note that, for each $j$, $\tilde{p}_j'$ is identified with the points $\tilde{p}_j$ and $\tilde{q}_j$. At each of these points, for $j\in\{n+1, \cdots, s\}$ we have a semi-ball whose radius $r_j$ is given by
    $$r_j=r_{s+1}-\sum_{i=j}^sa_i.$$
    
    If $B(\tilde{p}_{s+1}',r_{s+1})$ contains the accumulation point, then for $j\in \{s+2,\cdots, \infty\}$, we have a semi-ball whose radius $r_j$ is given by
    $$r_j=r_{s+1}-\sum_{i=s+1}^{j-1}a_i.$$
    Therefore, we have
    $$B(\tilde{p}_{s+1}',r_{s+1})= \SB(\tilde{p}'_{s+1}, r_{s+1}) \cup\bigcup^s_{i=n+1}\SB(\tilde{p}_i,r_i) \cup \bigcup^s_{i=n+1}\SB(\tilde{q}_i,r_i) \cup \bigcup^\infty_{i=s+2}\SB(\tilde{p}_i,r_i) \cup \bigcup^\infty_{i=s+2}\SB(\tilde{q}_i,r_i).$$

    Since $r_i>r_{i+1}$ and $\SB(\tilde{p}_{i+1},r_{i+1})\supset \SB(\tilde{q}_i,r_i)$, $j\in\{n+1, \cdots, s\}$, it follows that
    $$B(\tilde{p}_{s+1}',r_{s+1})= \SB(\tilde{p}'_{s+1}, r_{s+1}) \cup\bigcup^s_{i=n+1}\SB(\tilde{p}_i,r_i) \cup  \bigcup^\infty_{i=s+2}\SB(\tilde{p}_i,r_i) \cup \bigcup^\infty_{i=s+2}\SB(\tilde{q}_i,r_i).$$

    On the other hand, for $j\in \{s+2,\cdots, \infty\}$, 
 $\SB(\tilde{q}_{j+1},r_{j+1})\supset \SB(\tilde{p}_{j},r_{j})$, then

    $$B(\tilde{p}_{s+1}',r_{s+1})= \SB(\tilde{p}'_{s+1}, r_{s+1}) \cup\bigcup^s_{i=n+1}\SB(\tilde{p}_i,r_i) \cup  \bigcup^\infty_{i=s+2}\SB(\tilde{q}_i,r_i).$$

    If the ball $B(\tilde{p}_{s+1}',r_{s+1})$ does not contain the accumulation point, then it intersects some $\alpha_t'$. In this way, we have

     $$B(\tilde{p}_{s+1}',r_{s+1})= \SB(\tilde{p}'_{s+1}, r_{s+1}) \cup \bigcup^s_{i=n+1}\SB(\tilde{p}_i,r_i) \cup  \bigcup^t_{i=s+2}\SB(\tilde{q}_i,r_i).$$

Thus, the ball centered at $ \tilde{p} $ with radius $ r $ is given by the union of the half-ball centered at $ \tilde{p} $ with radius $ r $ ( as $ \SB(\tilde{p},r) $), and the ball $ B(\tilde{p}'_{s+1}, r_{s+1}) $. That is,  
$$
B(\tilde{p}, r) = \SB(\tilde{p},r) \cup B(\tilde{p}'_{s+1}, r_{s+1}).
$$

Observe that, so far, we have described balls centered at conic points lying on sides with general basic segment pairings and Type~$W$ identifications, leaving only the case of \emph{finite Type~$W$ identifications}. These can be viewed as particular instances of general basic segment pairings and serve as a useful stepping stone for understanding the construction of balls in more complex settings.

This case is analogous to the Type~$W$ identifications, with the key difference that the resulting ball is always a finite union of semi-balls.

For the description given above, assuming that for Type~$W$ identifications we have $ n_F $ values of $ a_i $ and $ b_i $, that is, $ i \in \{1,2, \dots, n_F\} $, the case where there are infinitely many identifications should be interpreted as $ n_F $, and when there is an accumulation point, it should be read as $ \tilde{p}_{n_F}' $.%, which is also identified with $ q_{n_F} $ and $ p_{n_F} $. Therefore, 

$$
B(\tilde{p}, r) = \SB(\tilde{p},r) \cup B(\tilde{p}'_{s+1}, r_{s+1}).
$$

\subsubsection{Regular case without intersection with other sides.} Now, let $ B(\tilde{p}, r) $ be a ball centered at a (preimage of) regular point that does not intersect two edges of the multipolygon. Furthermore, suppose that if a ball intersects a side, the side has an infinite number of identifications, meaning it follows a Type $W$ of identification.  

It is sufficient to consider this case, as the other types are analogous. In this setting, the  balls are either entire balls or unions of ball fragments and semi-balls in the $ l_\infty $-metric.

There are several possibilities for the preimages of the balls (see examples in Figure \ref{figexemplo3}). If the radius of the ball is sufficiently small such that it contains no non-regular points, then it remains a Euclidean square.

    \begin{figure}[!h]
         \centering
           \includegraphics[scale=6]{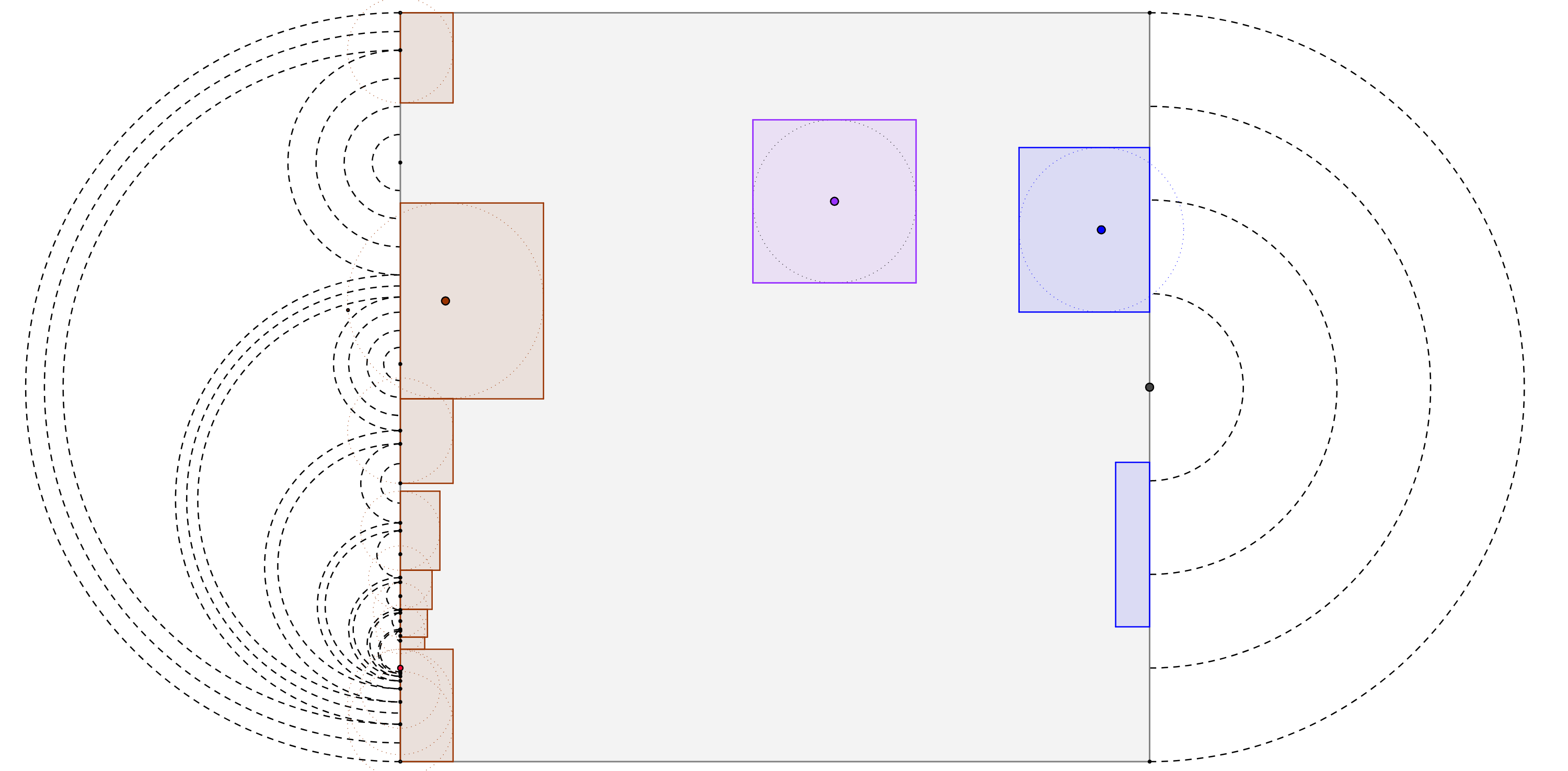}
           \caption{Examples of balls centered at regular points. }
           \label{figexemplo3}
       \end{figure}

In the case where the radius is not small, meaning the ball contains non-regular points, it is necessary to calculate the distance from the center of the ball to each non-regular point. New balls are then constructed with centers at the conic points identified within the main ball. The radii of these balls are determined by subtracting the distance between the corresponding conic point and the center of the initial ball from the radius of the main ball. After this step, the analysis reduces to the previously described conic case, both for intersections with the boundary of Type $W$ identifications and basic segment pairings.

Thus, $ B(\tilde{p}, r) $ is given by the union of the piece of the ball centered at $ \tilde{p} $ with radius $ r $, denoted as $ \PB(\tilde{p},r) $, and a ball centered at a conic point, $ B(\tilde{c},r) $, where $ \tilde{c} $ is either $ \tilde{p}_i $, $ \tilde{p}_i' $, or $ \tilde{q}_i $. It is necessary to consider the largest ball, that is, the one with the greatest radius. The construction of the ball centered at the conic point follows the same procedure as before.

Observe that the regular point may lie on the boundary of the square, specifically on the segments with identifications. In this case, the nearest conic point must be found, and the ball should be constructed accordingly, with $ \PB(\tilde{p},r) = \SB(\tilde{p},r) $. 

Therefore, in all cases where the ball contains at least one conic point, the ball $ B(\tilde{p},r) $ is given by  
$$
B(\tilde{p},r) = \PB(\tilde{p},r) \cup B(\tilde{c},r).
$$

\subsubsection{Balls intersecting other sides.} Now, consider the case where a ball intersects more than one side of the multipolygon. First, observe that if the ball is centered at an accumulation point, then, being a union of smaller balls, its intersection with adjacent sides may lead to a variety of configurations. 

To simplify the exposition, we first describe the construction in the case where the multipolygon $ P $ is a square. The general case, where $ P $ is an arbitrary multipolygon, follows by an inductive procedure over its sides and will be discussed at the end of this section.

Several illustrative examples are presented below to highlight these possibilities.

\begin{enumerate}
\item \textit{If the adjacent side is identified with the opposite side} (see Figure \ref{figsupL}), then  
$$
B(\tilde{p}_{\infty}, r) = \bigcup_1 B_i \cup B(\tilde{p}_0', r_0),
$$
which falls into the previously analyzed case, where $ B(\tilde{p}_0', r_0) $ is the union of two quarter-balls centered at $ \tilde{p}_0' $ and $ \tilde{p}_0 $, each with radius $ r_0 $.

    \item \textit{If the adjacent side follows a folding identification}, then the ball structure changes. In this case,  
    $$
    B(\tilde{p}_{\infty}, r) = \bigcup_1 B_i \cup B(\tilde{p}_0', r_0) \cup B(\tilde{\bar{p}}_0', r_0),
    $$
   where $ B(\tilde{p}_0', r_0) $ is the union of two quarter-balls centered at $ \tilde{p}_0' $ and $ \tilde{p}_0 $, each with radius $ r_0 $, and $ B(\tilde{\bar{p}}_0', r_0) $ is an additional ball that represents a quarter of the ball with radius $ r_0 $, centered at the vertex of the square identified with $ \tilde{p}_0' $. This new ball $B(\tilde{\bar{p}}_0', r_0) $ touches another adjacent side, which may have identifications of Type $W$ or basic segment pairings. Consequently, the type of identification on this newly touched side must also be analyzed, but the process follows the same steps as for the first adjacent side.  

For instance, if the third side follows a folding identification, an additional quarter-ball will appear at the fourth vertex of the square, denoted $ \tilde{\bar{p}}_0 $. The identification type on this fourth side must then be analyzed. If it also follows a folding identification, the quarter-ball centered at $ \tilde{p}_0 $ has already been accounted for.  

On the other hand, if the fourth side has a finite or infinite number of identifications, as in Types $W$ and finite $W$, then instead of a single quarter-ball, $ B(\tilde{\bar{p}}_0, r_0) $ consists of either a finite or infinite union of balls, respectively, centered at conic points. In this case, the structure falls into the conic case.

Therefore, we have  
$$
B(\tilde{p}_{\infty}, r) = \bigcup_1 B_i \cup B(\tilde{p}_0', r_0) \cup B(\tilde{\bar{p}}_0', r_0) \cup B(\tilde{\bar{p}}_0, r_0),
$$
where the balls $ B(\tilde{p}_0', r_0) $, $ B(\tilde{\bar{p}}_0', r_0) $, and $ B(\tilde{\bar{p}}_0, r_0) $ depend on the type of identification of the sides they intersect in a counterclockwise direction. That is, they may be either a finite or infinite union of balls.

    \item \textit{If the adjacent side has an infinite number of identifications (Type $W$)}, then the point $ \tilde{p}_0' $ is considered a conic point for this side. Consequently, the ball centered at this point satisfies  
    $$
    B(\tilde{p}_{\infty}, r) = \bigcup_1 B_i\cup  B(\tilde{p}_0', r_0) \cup B(\tilde{\bar{p}}_0', r_0),
    $$
    meaning it falls into the conic case. As before, this new ball $ B(\tilde{\bar{p}}_0', r_0) $ touches another adjacent side, which may have identifications of Type $W$ or basic segment pairings. The identification type of this new side must also be analyzed, following the same steps applied to the first adjacent side.

    Therefore, we have  
$$
B(\tilde{p}_{\infty}, r) = \bigcup_1 B_i \cup B(\tilde{p}_0', r_0) \cup B(\tilde{\bar{p}}_0', r_0) \cup B(\tilde{\bar{p}}_0, r_0),
$$
where the balls $ B(\tilde{p}_0', r_0) $, $ B(\tilde{\bar{p}}_0', r_0) $, and $ B(\tilde{\bar{p}}_0, r_0) $ depend on the type of identification of the sides they intersect in a counterclockwise direction. That is, they may be either a finite or infinite union of balls.

    \item \textit{If the adjacent side has a finite type $W$ identifications}, then $ \tilde{p}_0' $ is considered a conic point for this side. Consequently, constructing the ball centered at this point satisfies  
    $$
    B(\tilde{p}_{\infty}, r) = \bigcup_1 B_i \cup  B(\tilde{p}_0', r_0)\cup B(\tilde{\bar{p}}_0', r_0),
    $$
    which places it in the conic case for balls that do not contain the accumulation point. Again, this new ball $ B(\tilde{\bar{p}}_0', r_0) $ touches another adjacent side, which may have identifications of Type $W$ or basic segment pairings. The identification type of this adjacent side must be analyzed, but the process remains the same as for the first adjacent side.

    Therefore, we have  
$$
B(\tilde{p}_{\infty}, r) = \bigcup_1 B_i \cup B(\tilde{p}_0', r_0) \cup B(\tilde{\bar{p}}_0', r_0) \cup B(\tilde{\bar{p}}_0, r_0),
$$
where the balls $ B(\tilde{p}_0', r_0) $, $ B(\tilde{\bar{p}}_0', r_0) $, and $ B(\tilde{\bar{p}}_0, r_0) $ depend on the type of identification of the sides they intersect in a counterclockwise direction. That is, they may be either a finite or infinite union of balls.
\end{enumerate}

Now, observe that for cases where the balls are centered at either regular points or conic points, the analysis is analogous to that for accumulation points. This is because the balls intersecting other sides of the square are those centered at its vertices, which, depending on the type of boundary identification, may be the union of two quarter-balls, a finite union of semi-balls, or an infinite union of semi-balls.  

Thus, in the end, we obtain  
$$
 B(\tilde{p}, r) = \hat{B}(\tilde{p},r) \cup B(\tilde{p}_0', r_0) \cup B(\tilde{\bar{p}}_0', r_0) \cup B(\tilde{\bar{p}}_0, r_0),
$$
where $ \hat{B}(\tilde{p},r) $ is the ball constructed on the first side without considering the balls centered at the vertices, and the balls $ B(\tilde{p}_0', r_0) $, $ B(\tilde{\bar{p}}_0', r_0) $, and $ B(\tilde{\bar{p}}_0, r_0) $ are those centered at the vertices, whose structure depends on the type of identification of the sides they intersect in a counterclockwise direction.

Recall that if the first adjacent side identification is of folding identification, this union simplifies to  
$$
 B(\tilde{p}, r) = \hat{B}(\tilde{p},r) \cup B(\tilde{p}_0', r_0).
$$

We emphasize that the number of possible identifications along the sides of the square is infinite. The four types discussed here were chosen for illustrative purposes, and to provide a conceptual framework for the construction. In general, the geometry of the ball depends intricately on the nature of the pairings prescribed along the boundary.

We now turn to the general case, where $ P $ is a multipolygon. The goal is to extend the previous construction, developed for the square, to situations where the ball intersects two or more sides of a general multipolygon.

Suppose that a ball $ B(\tilde{p}, r) $, centered at a point $ \tilde{p} \in P $, intersects at least two sides of the multipolygon. We begin by constructing the portion of the ball that intersects the first side, following the same procedure as before for the case in which the ball does not intersect another side. We then analyze the portion that intersects the second side, applying the same method.

Depending on the type of identification along each side and on the number of conic points contained within the intersecting region, a finite or infinite number of additional parts of the ball (such as semi-balls or quarter-balls) may arise, centered at the points identified with the conic points contained in the ball.

These new balls may themselves intersect further sides of the polygon. When that occurs, the process is repeated: we examine whether the newly intersected region contains conic points, and if so, we construct additional local balls at the corresponding identified points. This step may in turn lead to intersections with further adjacent sides, which are then handled recursively.

This yields an inductive construction over the sides of the multipolygon. At each step:
\begin{itemize}
    \item we identify which sides are intersected by the current ball configuration;
    \item for each such side, we check whether the intersection contains conic points;
    \item if it does, we construct new parts of the ball centered at the corresponding identified points.
\end{itemize}

This inductive procedure continues until all sides intersected by the original ball $ B(\tilde{p}, r) $ have been considered. If the ball intersects only two sides, the process stops after two steps; if it intersects three, it proceeds to the third; and in general, it terminates in at most $ n $ steps, where $ n $ is the number of sides of the multipolygon.

At each step, the construction of the parts of the ball follows exactly the same principles described in the square case, depending on the type of identification on each side and on the nature (regular, conic, or accumulation) of the intersected points.

This method offers a clear framework for analyzing the local geometry of metric balls in paper spaces constructed from arbitrary side identifications on multipolygons.

Importantly, the resulting shape and complexity of a ball are influenced not just by how many sides it intersects but also by the nature of the identifications involved - be they foldings, finite patterns, or infinite configurations like those of Type~$W$. Each identification gives rise to new conic points or gluing structures, which in turn determine how the ball extends across the surface.

By advancing through the sides in a step-by-step manner, the construction ensures that all relevant geometric contributions are accounted for and incorporated into the final structure of the ball.

%This method provides not only a concrete description of the ball’s geometry but also a framework for analyzing more subtle properties of the space, such as curvature concentration, singularities, and local isometries. It highlights how the interplay between discrete identifications and continuous metric structures gives rise to rich geometric behavior, even in surfaces built from simple polygonal pieces.

%CONCLUSAO Note that, for a ball centered at a conic point with an angle of $\pi$ or $3\pi$, we construct additional balls at the points identified with the conic points contained within the original ball. The radii of these new balls are determined by subtracting the distance between the corresponding conic point and the center of the initial ball from the radius of the original ball.

\subsection{Proof of Theorem \ref{teo_principal}}

Initially, in this section, $(P,\mathcal{P})$ is again a paper-folding scheme with associated paper sphere or paper space $\mathcal{L}_\mathcal{P}\subset \mathcal{L}$.  We show that the surfaces in the class $ \mathcal{L} $ satisfy the conditions of being  Ahlfors 2-regular and linearly locally contractible. As a consequence, we conclude that the surfaces of type $ \mathcal{L}^* $ are quasisymmetrically equivalent to the 2-sphere.

\subsubsection{Ahlfors 2-regularity}
%In this section, we show that the class of surfaces described above satisfies the conditions of being Ahlfors 2-regular and linearly locally connected (LLC). As a consequence, these surfaces are quasisymmetrically equivalent to the 2-sphere.
In order to prove that a space $ \mathcal{L}_\mathcal{P} $ is Ahlfors $ 2 $-regular, we first need to establish certain properties of the metric structure on $ \mathcal{L}_\mathcal{P}  $. Specifically, we need to show that there exists a constant $ r_0 > 0 $ such that the conditions of Ahlfors regularity are satisfied for $ \mathcal{L}_\mathcal{P}  $. To achieve this, we will apply Lemma~\ref{lemma-ahlfors}, which provides a criterion for Ahlfors regularity in terms of the measure of balls in a metric space. 

%The following lemma will be crucial in demonstrating that $ L $ satisfies the necessary regularity conditions by determining an appropriate value of $ r_0 $ for the given space.

\begin{lemma}\label{lemma-ahlfors}
Let $ X $ be a metric space such that $ \mathcal{H}^Q(X) < \infty $. Suppose there exist positive constants $ r_0 > 0 $ and $ c_0 > 0 $ such that for all $ 0 < r \leq r_0 $ and for every $ a \in X $, we have
$$
\frac{1}{c_0} r^Q \leq \mathcal{H}^Q(\overline{B}(a, r)) \leq c_0 r^Q.
$$
Then $ X $ is Ahlfors $ Q $-regular.
\end{lemma}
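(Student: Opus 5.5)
The idea is to extend the two-sided estimate from the small range $0<r\le r_0$ to the full range $0<r\le \operatorname{diam}(X)$, using only that $\mathcal{H}^Q(X)$ is finite and that the two-sided bound already holds at the single scale $r_0$. First I will dispose of a degenerate case. If $r_0\ge \operatorname{diam}(X)$, the hypothesis already covers every admissible radius, so the statement holds with $C=c_0$. So assume $r_0<\operatorname{diam}(X)=:D$. Note that $D<\infty$ is needed here. This follows from the hypotheses: the lower bound at scale $r_0$ together with $\mathcal{H}^Q(X)<\infty$ bounds the number of disjoint $r_0$-balls, and hence bounds the diameter, at least in the length spaces of interest. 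For the paper surfaces $\mathcal{L}_\mathcal{P}$ it is automatic anyway, since they are compact.

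The upper bound is straightforward. For $r_0<r\le D$ and any $a\in X$,
$$\mathcal{H}^Q(\overline{B}(a,r))\le \mathcal{H}^Q(X)=\frac{\mathcal{H}^Q(X)}{r_0^Q}\,r_0^Q\le \frac{\mathcal{H}^Q(X)}{r_0^Q}\,r^Q .$$
This gives the upper estimate with constant $\mathcal{H}^Q(X)/r_0^Q$.

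The lower bound uses monotonicity together with the small-scale estimate. For $r_0<r\le D$ we have $\overline{B}(a,r)\supset \overline{B}(a,r_0)$, and therefore
$$\mathcal{H}^Q(\overline{B}(a,r))\ge \mathcal{H}^Q(\overline{B}(a,r_0))\ge \frac{1}{c_0}r_0^Q=\frac{1}{c_0}\Big(\frac{r_0}{r}\Big)^Q r^Q\ge \frac{1}{c_0}\Big(\frac{r_0}{D}\Big)^Q r^Q .$$
Setting
$$C=\max\Big\{c_0,\ \frac{\mathcal{H}^Q(X)}{r_0^Q},\ c_0\Big(\frac{D}{r_0}\Big)^Q\Big\}$$
yields $C^{-1}r^Q\le \mathcal{H}^Q(\overline{B}(a,r))\le Cr^Q$ for all $0<r\le D$, which is Ahlfors $Q$-regularity.

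The only delicate point is the lower bound at large scales. There the argument relies on the ratio $D/r_0$ being finite, so I will state explicitly that $X$ is bounded; this is harmless in the intended application to compact paper surfaces. Everything else is direct bookkeeping of constants.
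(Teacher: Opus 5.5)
Your argument is correct and follows the same route as the paper: the upper bound for $r>r_0$ comes from $\mathcal{H}^Q(\overline{B}(a,r))\le\mathcal{H}^Q(X)\le(\mathcal{H}^Q(X)/r_0^Q)\,r^Q$, and the lower bound comes from monotonicity down to scale $r_0$. Your use of $r\le D=\operatorname{diam}(X)<\infty$ to get the uniform factor $(r_0/D)^Q$, together with the term $c_0(D/r_0)^Q$ in $C$, is exactly the step the paper's lower-bound computation omits: its constant $c_1=\frac{1}{c_0}(r_0/r)^Q$ depends on $r$, and its $C$ lacks that term. Your boundedness claim needs no hedge, because a maximal $2r_0$-separated set has at most $c_0\mathcal{H}^Q(X)/r_0^Q$ points and the $2r_0$-balls around those points cover $X$.
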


\begin{proof}
Let $ C := \max\left\{ c_0, \frac{\mathcal{H}^Q(X)}{r_0^Q} \right\} $, and let $ r > 0 $ be arbitrary.

If $ 0 < r \leq r_0 $, the assumption gives directly
$$
\frac{1}{c_0} r^Q \leq \mathcal{H}^Q(\overline{B}(a, r)) \leq c_0 r^Q \leq C r^Q.
$$

Now, suppose $ r > r_0 $. Then
$$
\mathcal{H}^Q(\overline{B}(a, r)) \leq \mathcal{H}^Q(X) = \frac{\mathcal{H}^Q(X)}{r^Q} \cdot r^Q \leq \frac{\mathcal{H}^Q(X)}{r_0^Q} \cdot r^Q \leq C r^Q.
$$

For the lower bound, note that $ \overline{B}(a, r) $ contains a ball of radius $ r_0 $, since $ r > r_0 $, so
$$
\mathcal{H}^Q(\overline{B}(a, r)) \geq \mathcal{H}^Q(\overline{B}(a, r_0)) \geq \frac{1}{c_0} r_0^Q =: c_1 r^Q,
$$
with $ c_1 = \frac{1}{c_0} \cdot \frac{r_0^Q}{r^Q} \geq \frac{1}{c_0} \cdot \frac{r_0^Q}{r^Q} $. Then for $ r \geq r_0 $, this implies
$$
\mathcal{H}^Q(\overline{B}(a, r)) \geq \frac{r^Q}{c_0 \cdot (r/r_0)^Q} = \frac{1}{c_0} r_0^Q.
$$

Therefore, $ X $ is Ahlfors $ Q $-regular.
\end{proof}

\emph{Proof of Theorem \ref{teo_principal}(a).} Since the preimages of balls in $\mathcal{L}_\mathcal{P}$ under the projection map are subsets of $ \mathbb{R}^2 $, the Hausdorff 2-measure in this context is given by a constant factor that multiplies the area of the ball.

Furthermore, different types of balls must be considered depending on the location of their centers. Specifically, three cases arise: balls centered at conic points, at the accumulation point, and at regular points. A detailed examination of each case is therefore necessary.

In order to apply Lemma~\ref{lemma-ahlfors} and show that a space $ \mathcal{L}_\mathcal{P}$ is Ahlfors $ 2 $-regular, we first need to determine an appropriate value for $ r_0 $. Consider a multipolygon $ P $ with $ n $ edges. Denote the sides of $ P $ by $ e_1, e_2, \dots, e_n $. Let $ d_{\min} $ represent the smallest distance between non-adjacent sides of $ P $, defined as
$$
d_{\min} = \min \{ d(e_i, e_j) : j \neq i+1 \}.
$$
Note that for $ r < d_{\min}/2 $ and for a point $ \tilde{p} $ located on one of the sides of $ P $, the ball $ B(\tilde{p}, r) $, centered at $ \tilde{p} $ with radius $ r $, does not intersect non-adjacent sides of any polygon. At this stage, we refer to $ B(\tilde{p}, r) $ as the first ball, without considering any identifications on the sides.

For points $ \tilde{p} $ on one of the sides $ e_i $ and for $ r < \text{diam}(P) $, there exists a constant $ K $ such that for $ r / 2K_r < d_{\min}/2 $, the ball $ B(\tilde{p}, r / 2K_r) $ does not intersect any non-adjacent sides. 

It is important to note that $ K_r $, at this point,  depends on the radius $ r $, and we need to find a $ K $ that works for all $ r $. To do so, we consider the largest possible value for $ r $, which is $ r = \text{diam}(P) $. For this case, there exists a constant $ K $ such that $ B(\tilde{p}, r / 2K) $ (before considering the identifications on the edges) does not intersect any non-adjacent side, for each $ \tilde{p} $ on a side of $ P $.

Therefore, we conclude that for any $ r < \text{diam}(P) $, the same $ K $ works, ensuring that the ball $ B(\tilde{p}, r) $ does not intersect any non-adjacent sides of the multipolygon $ P $ (before considering the identifications on the edges). 

We can then define $ r_0 = \frac{\text{diam}(P)}{2K} $ as the desired radius.

%To establish this condition, it is necessary to determine the shape of the metric balls. However, as observed earlier, it suffices to analyze the balls under the projection $\pi_\mathcal{P}$, since the metric quotient respects the identifications along the boundary (see \textcolor{red}{Proposition X}).  

Throughout this proof, any reference to a ball refers to its preimage under the projection map and to a conic point or {conic point of angle $ k\pi $} (with $ k \in \mathbb{N} $), refers to one of its $ k $ preimages at the boundary of $ P $.

The condition of being Ahlfors 2-regular at regular points is straightforward if we consider radii small enough so that the balls are far from the non-regular points. At these points, the balls will be Euclidean squares, and since the $l_\infty$-metric and the Euclidean metric are equivalent, by {Lemma} \ref{equivhausdorff}, there exists a constant $L$ such that
 
  $$\mathcal{H}^2(B(\tilde{p},r))= \frac{1}{4}\pi L^2 \operatorname{Area}(B(\tilde{p},r))= \frac{1}{4} \pi L^2  4r^2=\pi L^2 r^2$$
  which implies the desired bounded.

The condition of Ahlfors 2-regularity at the conic points follows straightforwardly when sufficiently small radii are considered, such that the ball does not contain any other conic points. In this case, we have

$$
\mathcal{H}^2(B(\tilde{p},r)) \leq \frac{1}{4} \pi L^2 k \operatorname{Area}\left( \SB(\tilde{p}, r) \right) = \frac{1}{4} \pi L^2 k \cdot 2 r^2 = \frac{1}{2} \pi L^2 k r^2,
$$
where $ \tilde{p} $ is a conic point with angle $ k\pi $.

If at least one side of the multipolygon has Type~$W$ identification, consider $ B(\tilde{p}_{\infty}, r) $ to be a ball centered at the accumulation point that does not intersect any other edge of the multipolygon.

Assume that %the lower end of
the boundary of the ball intersects the segment $\alpha'_n$. The upper bound for the area of the ball $B(\tilde{p}_{\infty},r)$ can be determined as follows: consider the area of the rectangle that contains all the balls centered at the points $\tilde{p}_i$, for $i\in \{n+1, \cdots, \infty\}$, plus the area of the ball centered at the point $\tilde{p}_n$, as illustrated in Figure \ref{Quali_area_OExemplooo}. 

\begin{figure}[!h]
         \centering
           \includegraphics[scale=6]{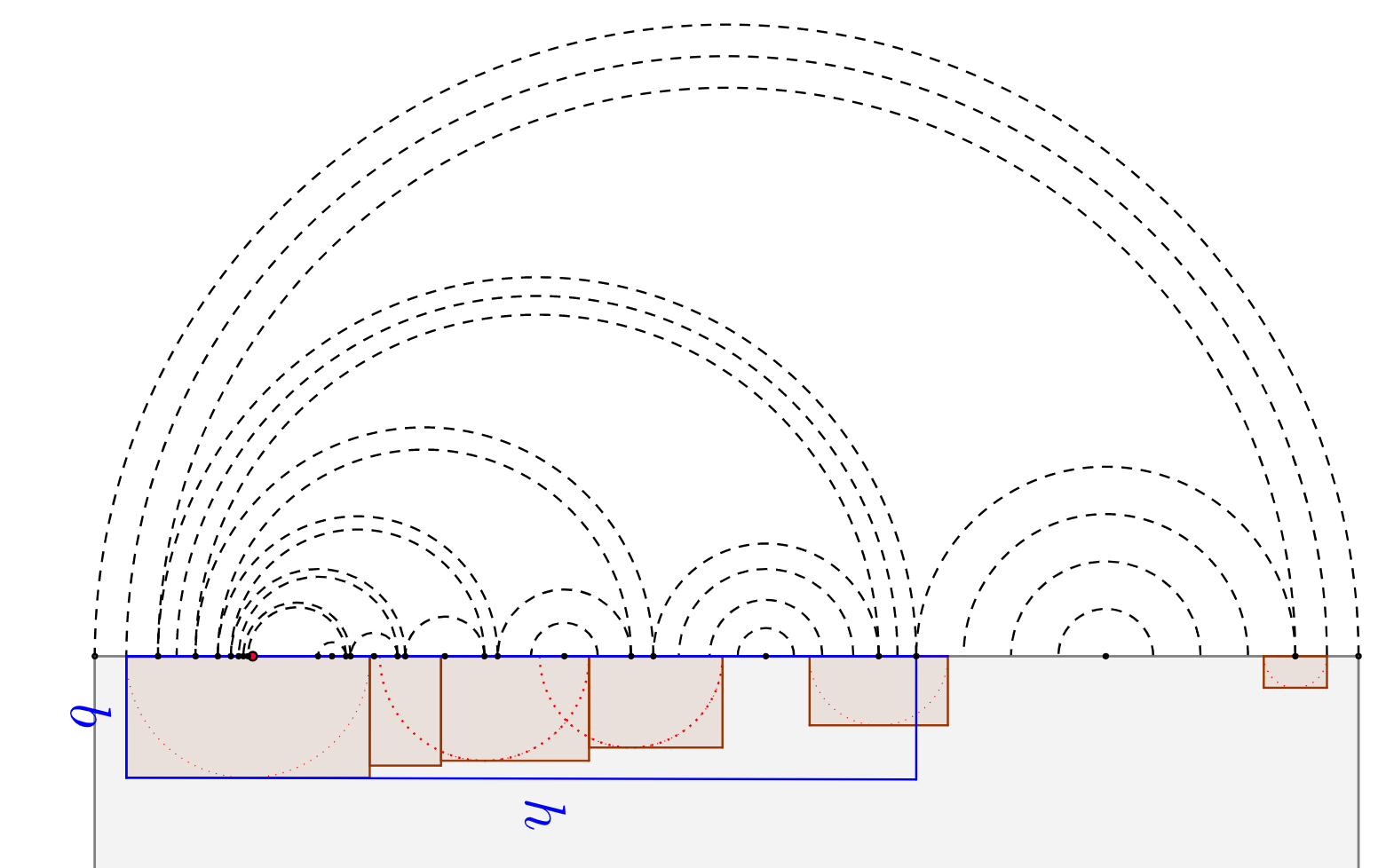}
           \caption{Figure to illustrate how the upper bound of the area will be determined.}
           \label{Quali_area_OExemplooo}
       \end{figure}

       Since  $r=\sum_{i=n+1}^\infty a_k+\epsilon a_n$, where  $\epsilon\in\mathbb{R}_+$ and $\epsilon<1$, 
\begin{align*}
    \operatorname{Area}(B(\tilde{p}_{\infty},r_n))\le& \  bh+\operatorname{Area}(B(\tilde{p}_n,\epsilon a_n))+\epsilon a_n r \\
    \le& \  r \left( r+2\sum_{n+1}^\infty b_k+\sum_{n+1}^\infty a_{k}\right)+ \frac{r^2}{2}+r^2\\
\end{align*}

Since $\{a_i\}_{i \in \mathbb{N}}$ and $\{b_i\}_{i \in \mathbb{N}}$ are sequences of constant values satisfying
$$
\sum_{i=0}^\infty a_i + \sum_{i=0}^\infty b_i = \ell/2,
$$
it follows that there exists a constant $K' > 0$ such that for all $n \in \mathbb{N}$,
$$
\sum_{i=n+1}^\infty b_i \le K' \sum_{i=n+1}^\infty a_i.
$$
Therefore, 

\begin{align*}
    \operatorname{Area}(B(\tilde{p}_{\infty},r_n))\le& \  r \left( r+ 2K'\sum_{n+1}^\infty a_{k}+\sum_{n+1}^\infty a_{k}\right)+\frac{3r^2}{2}\\
    \le& \  r \left( r+ (2K'+1)r\right)+\frac{3r^2}{2}\\
     \le& \ \left( 2K'+ \frac{7}{2}\right) r^2.
\end{align*}

In particular, consider the case in which the sequences decrease geometrically; that is, when
$$
a_k = a_0 \lambda^{-k}, \quad b_k = b_0 \lambda^{-k},
$$
for some real number $\lambda > 1$. In this setting, the following inequalities are satisfied. This case is of particular importance, as paper surfaces arising from dynamical systems frequently exhibit this type of behavior (see~\cite{de2004unimodal}).

\begin{align*}
    \operatorname{Area}(B(\tilde{p}_{\infty},r_n))\le& \  bh+\operatorname{Area}(B(\tilde{p}_n,\epsilon a_n))+\epsilon a_n r \\
    \le& \  r \left( r+2\sum_{n+1}^\infty b_k+\sum_{n+1}^\infty a_{k}\right)+ \frac{r^2}{2}+r^2\\
    \le & \ r \left( r+2\left( \frac{b_0\lambda^{-(n+1)}}{1-\lambda^{-1}}\right)+r\right)+\frac{3r^2}{2}\\
    \le & \ r \left(2 r+2\left( \frac{b_0\lambda^{-n}}{1-\lambda^{-1}}\right)\right)+\frac{3r^2}{2}.\\
\end{align*}

If $b_0>a_0$, where  $a_0$ and $b_0$ are fixed real numbers, then there exists a $p\in \mathbb{R}$  such that $pa_0>b_0$. Therefore, 

\begin{align*}
    \operatorname{Area}(B(\tilde{p}_{\infty},r))\le & \ r \left(2 r+2{a_0}{p}\left( \frac{\lambda^{-n}}{1-\lambda^{-1}}\right)\right)+\frac{3r^2}{2}\\
    = & \ r (2r+{2}{p}r)+\frac{3r^2}{2}
    =\left({2}{p}+\frac{7}{2}\right)r^2.
\end{align*}

And, if $a_0>b_0$, we have

\begin{align*}
    \operatorname{Area}(B(\tilde{p}_{\infty},r))\le & \ r \left(2 r+2{a_0}\left( \frac{\lambda^{-n}}{1-\lambda^{-1}}\right)\right)+\frac{3r^2}{2}\\
    = & \ r (2r+{2}r)+\frac{3r^2}{2}\\
    =& \ \frac{11}{2}r^2.\\
\end{align*}

The upper bound for the area of the ball $B(\tilde{p}_{\infty},r)$ can be determined by
$$\operatorname{Area}(B(\tilde{p}_{\infty},r))\ge 2r^2.$$

If at least one side of the multipolygon has Type~$W$ identification, consider  $ B(\tilde{p}, r) $ to be a ball centered at a conic that does not intersect any other edge of the multipolygon. For the general case of Ahlfors 2-regularity at conic points, a similar argument to that used for accumulation points applies.

%Furthermore, suppose the side has an infinite number of identifications, meaning it follows a Type 3 identification.   It is sufficient, now, to consider this case, as the other types are analogous and simpler.

In this case, for any ball centered at a conic point that is above the accumulation point, when we make the identifications with respect to the edge, a ball centered on a conic point below the accumulation point will appear.

We will divide this into two cases:
\begin{enumerate}[C1)]
    \item The ball that is centered at the conic point below the accumulation point  contains it;

    We will denote by $n$ the index corresponding to the lower $\alpha_n'$ that this ball touches and by $k$ the index corresponding to the upper $\alpha_k'$ that this ball touches. This means there is a new ball centered at the point $\tilde{p}_k'$.  Thus,
    $$r>\sum_{i=n+1}^{k-1}a_i \ \text{and}
\ r>\sum_{i=k}^\infty a_i.$$

The total area is given by the area of the ball centered at the point below the accumulation point, plus twice the area of the ball corresponding to $\alpha_n$ (to account for the piece of $\beta_n$ of the ball corresponding to $\alpha_{n+1}$), and the area of two rectangles. The first rectangle, with height $h_1$, contains the balls corresponding to the identifications of the ends of $\alpha_i$ for $i\in \{n+1, \cdots, k-1\}$. The second rectangle, with height $h_2$, corresponds to the main ball and the balls corresponding to the indices $i\in \{k+1, k+2,\cdots \}$.

    Therefore, 
    \begin{align*}
        \operatorname{Area}(B(\tilde{p},r))<&  \ {2}r^2+  r^2+r\cdot h_1 + r\cdot h_2\\
        <& \ 6 r^2+r\left(r+2\sum_{n+1}^{k-1}b_i+\sum_{n+1}^{k-1}a_i\right)+ r\left(r+2\sum_{k+1}^\infty b_i+\sum_{k+1}^\infty a_i\right)\\
        <& \ 6 r^2 + r\left(r+2\sum_{n+1}^{k-1}b_i+r\right)+ r\left(r+2\sum_{k+1}^\infty b_i+r\right).
    \end{align*}
%    If $b_0>a_0$, where  $a_0$ and $b_0$ are fixed real numbers, there will be $p\in \mathbb{R}$  such that $pa_0>b_0$.

Since $\{a_i\}_{i \in \mathbb{N}}$ and $\{b_i\}_{i \in \mathbb{N}}$ are sequences of constant values satisfying
$$
\sum_{i=0}^\infty a_i + \sum_{i=0}^\infty b_i = \ell/2,
$$
it follows that there exists a constant $K' > 0$ such that for all $n \in \mathbb{N}$,
$$
\sum_{i=n+1}^{k-1} b_i \le K'' \sum_{i=n+1}^{k-1}  a_i.
$$
Therefore,

    \begin{align*}
        \operatorname{Area}(B(\tilde{p},r))<& \ 6 r^2 + r(2r +2K''r)+r(2r+2K''r)\\
        =&\left(10+4K''\right)r^2.
    \end{align*}

    %If $a_0>b_0$, we have
    %  \begin{align*}
     %   \operatorname{Area}(B(a,r))<& \ 6 r^2 + r(2r +2r)+r(2r+2r)\\
      %  =&\ 14r^2.
   % \end{align*}

  \begin{figure}[h]
    \centering
    \includegraphics[width=0.45
    \textwidth]{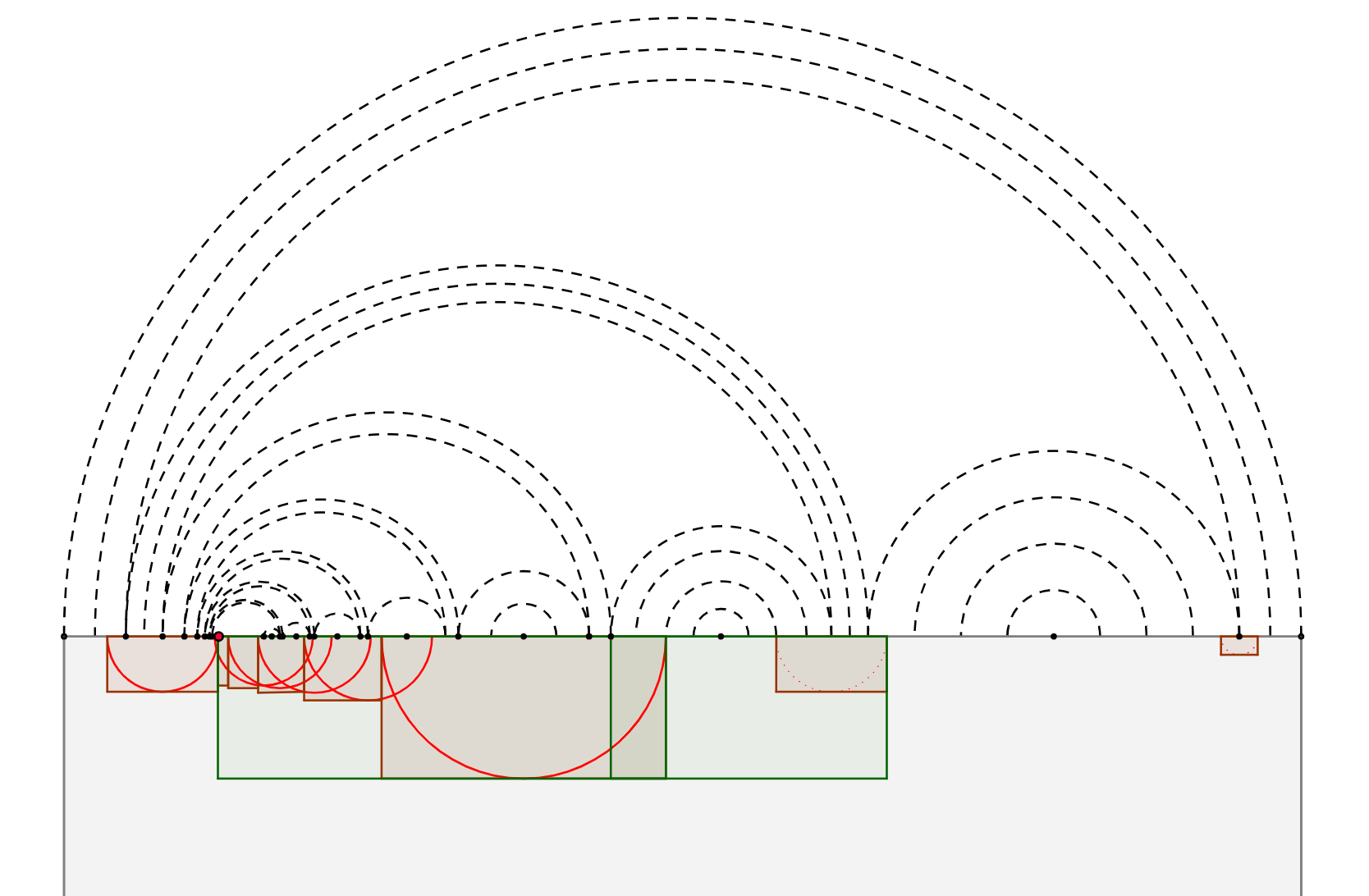}\caption{Figure to illustrate how the upper bound of the area will be determined.  }
\end{figure}

    \item The ball that is centered at the conic point below the accumulation point does not contain it.

    Again, we will denote by $n$ the index corresponding to the lower $\alpha_n$ that this ball touches and by $k$ the index corresponding to the upper $\alpha_k$ that this ball touches. Thus, 
    $$r>\sum_{i=n+1}^{k-1}a_i \ \text{ and } \ 
r>\sum_{i=k}^sa_i.$$

A similar argument to the one used for  case C1) applies:

    \begin{align*}
        \operatorname{Area}(B(\tilde{p},r))<&  {2}r^2+ 4 r^2+r\cdot h_1 + r\cdot h_2\\
        <&6 r^2+r\left(r+2\sum_{n+1}^{k-1}b_i+\sum_{n+1}^{k-1}a_i\right)+ r\left(r+2\sum_{k}^sb_i+\sum_{k}^sa_i\right)\\
        =&6 r^2 + r(2r +2K'''r)+r(2r+2K'''r)\\
        =&\left(10+4K'''\right)r^2
    \end{align*}

\end{enumerate}

Furthermore, for balls centered at the conic points, the lower bound for the area of the balls can be given by 
$$\operatorname{Area}(B(\tilde{p},r))>{2}r^2,$$ 
since at least the semi-ball - before the identifications are made - will be considered.

Consider a ball $ B(\tilde{p}, r) $ centered at a conic point located on a side of the multipolygon, where the side does not have a Type~$W$ identification, and the ball does not intersect any other side of the polygon. In this case, the ball will be a finite union of balls with radii smaller than or equal to $ n_p $. Therefore, we have the following inequality for the area:

$$
2r^2\leq \operatorname{Area}(B(\tilde{p}, r)) \leq 2n_p r^2,
$$
where $ k $ is the angle factor associated with the conic point.

In particular, if there is a finite Type~$W$ identification on a side of the multipolygon, let $ B(\tilde{p}, r) $ be a ball centered at a conic point $ \tilde{p} $ with radius $ r $. In this case, the ball $ B(\tilde{p}, r) $ will be a finite union of semi-balls, each with radius smaller than or equal to $ r $. More precisely, we have the following:

\begin{enumerate}[ C1)]
    \item The ball centered at the conic point below the point $\tilde{p}_{n_F}'$ contains it.

    Observe that, the main ball necessarily contains at least one conic point with an angle of $ 3\pi $, there will be a ball centered at the endpoints of some segment $ \alpha'_s $ corresponding to the conic point with an angle of $ 3\pi $ closest to the center $ \tilde{p} $, or at $ \tilde{p} $ itself if it is a conic point with an angle of $ {3\pi} $, similarly to Section \ref{bolas-sem-ling}. Assume that this corresponding ball is centered at the point $ \tilde{p}'_{s+1} $ and that its lower boundary intersects the segment $ \alpha'_n $. 

   Thus,
    $$
    r > \sum_{i=n+1}^{s} a_i \quad \text{and} \quad r > \sum_{i=s+1}^{n_F} a_i.
    $$
    We know that
    $$
    B(\tilde{p},r) = \SB(\tilde{p},r) \cup B(\tilde{p}_{s+1}', r_{s+1}),
    $$
    where
    $$
    B(\tilde{p}_{s+1}', r_{s+1}) = \bigcup_{i=n+1}^{s} \SB(\tilde{p}_i,r_i) \cup \bigcup_{i=s+2}^{n_F} \SB(\tilde{q}_i,r_i),
    $$
    and the radii are given by:
    $$
    r_j = r_{s+1} - \sum_{i=j}^{s} a_i, \quad \text{for } j \in \{n+1, n+2, \dots, s\},
    $$
    and
    $$
    r_j = r_{s+1} - \sum_{i=s+1}^{j-1} a_i, \quad \text{for } j \in \{k+1, k+2, \dots, n_F\}.
    $$

    Therefore, the area is given by
    $$
    \operatorname{Area}(B(\tilde{p},r)) = 2r^2 + \sum_{i=n+1}^{s} r_i^2 + r_{s+1}^2+\sum_{i=s+2}^{n_F} r_i^2 \le (n_F+1)r^2.
    $$

\item The ball that is centered at the conic point below the point $\tilde{p}_{n_F}'$ does not contain it.

    Again,  there will be a ball centered at the endpoints of some segment $ \alpha'_s $ corresponding to the conic point with an angle of $ 3\pi $ closest to the center $ \tilde{p}$, or at $ \tilde{p} $ itself if it is a conic point with an angle of $ {3\pi} $. Assume that this corresponding ball is centered at the point $ \tilde{p}'_{s+1} $ and that its lower boundary intersects the segment $ \alpha'_n $. Furthermore, denote  by $t$ the index corresponding to the upper $\alpha_l$ that this ball touches.  Thus, 
    $$r>\sum_{i=n+1}^{s}a_i \ \text{ and } \ 
r>\sum_{i=s+1}^ta_i.$$

A similar argument to the one used for  case C1) applies:

$$
    \operatorname{Area}(B(\tilde{p},r)) = 2r^2 + \sum_{i=n+1}^{s} r_i^2 + r_{s+1}^2+\sum_{i=s+2}^{t} r_i^2\le (l+1)r^2 \le (n_F+1)r^2.
    $$
\end{enumerate}

Now, let $ B(\tilde{p}, r) $ be a ball centered at a regular point that does not intersect any other edge of the multipolygon. In this case, it is possible to derive the following inequality for the area:

$$
2r^2 \leq \operatorname{Area}(B(\tilde{p}, r)) \leq 4r^2.
$$

In the case where the radius is not small, meaning the ball contains non-regular points, it is necessary to compute the distance from the center of the ball to each non-regular point. New balls are then constructed with centers at the conic points identified within the main ball. The radii of these balls are determined by subtracting the distance between the corresponding conic point and the center of the initial ball from the radius of the main ball.  

After this step, the analysis reduces to the previously described conic case.

    Therefore, the area of a ball centered at a regular point can be upper bounded by the area of the ball without considering its intersection with the boundary (i.e., $4r^2$), plus the area of the largest ball centered at a conic point that remains entirely contained within the first ball.

    %Observe that balls centered on or near sides with Type 4 identifications follow the same procedure used for constructing balls centered at conic or regular points near a side with an infinite number of identifications (i.e., Type 3). The only difference in this case is that the resulting structure will always be a finite union of balls. Consequently, the estimates derived above remain valid for this case.

Finally, consider the case where the balls intersect more than one side of the multipolygon. Observe that this case essentially involves considering the balls centered on each side of the multipolygon since the radii decrease as parts of the balls appear due to the identifications of the sides. Therefore, the area of a ball can be upper-bounded by the sum of the areas of the balls with centers at conic points on each side of the multipolygon, plus the area of the first part of the ball. More precisely, we have:

Let $ B^i $ be the ball associated with the side $ e_i $, which means that $ B^i $ is a ball either centered at the conic point or at an accumulation point on the side $ e_i $. The area of the ball $ B(\tilde{p}, r) $ can then be upper-bounded by the following expression:

$$
B(\tilde{p}, r) \leq 4r^2 + 2n r^2  + \sum_{i=1}^n \max \left\{
\begin{array}{ll}
W_i & \text{if } e_i \text{ is a side with Type~$W$ identification}, \\
B{P}_i & \text{if } e_i \text{ is a side with basic segment pairings},
\end{array}
\right\}
$$
where

$$
W_i = \max \left\{ \operatorname{Area}(B^i(\tilde{p}_{\infty}, r)), \operatorname{Area}(B^i(\tilde{p}_c, r)) \mid \tilde{p}_\infty \in B^i(\tilde{p}_c, r), \right.
$$
$$
\left. \operatorname{Area}(B^i(\tilde{p}_c, r)) \mid \tilde{p}_\infty \notin B^i(p_c, r), \operatorname{Area}(\tilde{p}_{int}, r) \right\},
$$
and

$$
BP_i = \max \left\{ \operatorname{Area}(B^i(\tilde{p}_c, r)), \operatorname{Area}(B^i(\tilde{p}_{int} , r)) \right\}.
$$

Here, $ \tilde{p}_c $ refers to a conic point, and $ \tilde{p}_{int} $ refers to a regular point in the interior of a segment of pairings on a side of the multipolygon.

The term $ 4r^2 $ accounts for balls centered at interior points of the multipolygon, while $ 2n r^2 $ accounts for balls centered at the intersections of the sides, where $ n $ is the number of sides of the multipolygon.

The sum of the maximum values $ W_i $ and $ BP_i $ ensures that we are considering the largest possible areas of balls associated with sides having Type~$W$ identifications and basic segment pairings, respectively. In particular, for a side $ e_j $ with basic segment pairings, the area of a ball centered on this side is less than $ 2n_j r^2 $, where $ n_j $ is the number of conic points on this side.
%Specifically, $ W_i $ represents the maximum possible area for a side with Type~$W$ identification, accounting for whether the accumulation point $ \infty $ is inside or outside the ball. On the other hand, $ BP_i $ considers the maximum area between the balls associated with the accumulation point $ \infty $ and the interior points of the multipolygon $ p_i $.

%This formula provides a robust way to estimate the area of $ B(p, r) $, considering the different configurations of identifications on the sides of the multipolygon and offering a comprehensive approach for calculating the area of the balls in the context of multipolygons with Type~$W$ and basic segment pairing identifications.

To conclude, observe that in all cases, a lower bound for the area of the balls is given by  
$$
\operatorname{Area}(B(\tilde{p},r)) > 2r^2,
$$
since at the very least, the semi-ball must be considered - before the identifications are made.

\qed 

\subsubsection{Linear local contractibility}

To establish the linear local contractibility of the class of surfaces $\mathcal{L}$, we require the following result. This lemma was kindly provided by Mario Bonk, and a proof is included in Appendix A.

\begin{lemma}\label{defret1}

 Let $X,Y\subseteq S^2$ be subsets such that $Y\subseteq X$. If $Y$ is a deformation retract of $X$ and $Y^c$ is connected, then $X^c$ is connected. 

   \end{lemma}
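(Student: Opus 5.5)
The plan is to argue by contradiction, using the deformation to transport a separation of $X^c$ to a separation of $Y^c$. Write $H\colon X\times[0,1]\to X$ for the deformation, with $H(\cdot,0)=\mathrm{id}_X$, $H(\cdot,1)=r$ a retraction onto $Y$, and $H(y,t)=y$ for $y\in Y$. (If only the weak notion of deformation retract is intended, I would use just that $r$ is homotopic in $X$ to $\mathrm{id}_X$ and $r|_Y=\mathrm{id}_Y$, and check that this suffices below.) Note that $Y^c = X^c \cup (X\setminus Y)$. Suppose $X^c$ is not connected, and fix points $a,b\in X^c$ lying in different quasi-components of $X^c$. The aim is to show that $a$ and $b$ then also lie in different quasi-components of $Y^c$, contradicting the connectedness of $Y^c$.

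To detect separation for arbitrary (not necessarily closed or compact) subsets, I would use the complex-analytic criterion of Eilenberg type. Identify $S^2\setminus\{b\}$ with $\mathbb{C}$ and consider
$$
\phi_{a,b}(z)=\frac{z-a}{z-b}\colon S^2\setminus\{a,b\}\to \mathbb{C}\setminus\{0\}.
$$
Its restriction to a set $K\not\ni a,b$ is the basic invariant.

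\textbf{First step.} If $a,b$ lie in the same component of $K^c$, joined by an arc $\gamma\subset K^c$, then $\phi_{a,b}|_K$ has a continuous logarithm. The logarithm is constructed from the branch of $\log\phi_{a,b}$ on the slit sphere $S^2\setminus\gamma$. Hence $\phi_{a,b}|_K$ is nullhomotopic.

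\textbf{Second step.} I need a converse: if $a$ and $b$ are separated in $X^c$, then $\phi_{a,b}|_X$ is \emph{not} nullhomotopic. For compact $X$ this is Eilenberg's theorem. For an arbitrary $X$ I would argue as follows. Take a separation $X^c=A\cup B$ with $a\in A$, $b\in B$, and $\overline{A}\cap B=\emptyset=A\cap\overline{B}$. Choose disjoint open sets $U\supset A$ and $V\supset B$ in $S^2$; this uses that $S^2$ is hereditarily normal. Set $K=S^2\setminus(U\cup V)$. Then $K\subseteq X$ is compact and separates $a$ from $b$, so by Eilenberg $\phi_{a,b}|_K$ is essential. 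Since restriction to $K$ preserves nullhomotopy, $\phi_{a,b}|_X$ is essential as well.

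\textbf{Third step.} Now use the deformation. Since $H(X\times[0,1])\subseteq X$ misses $a$ and $b$, the map $\phi_{a,b}\circ H$ is a homotopy in $\mathbb{C}\setminus\{0\}$ from $\phi_{a,b}|_X$ to $\phi_{a,b}\circ r$. Meanwhile $a,b\in X^c\subseteq Y^c$, and $Y^c$ is connected. The goal is to conclude that $\phi_{a,b}|_Y$ is nullhomotopic. Once we have that, $\phi_{a,b}\circ r=(\phi_{a,b}|_Y)\circ r$ is nullhomotopic too, hence so is $\phi_{a,b}|_X$. This contradicts the second step and proves that $X^c$ is connected.

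\textbf{Main obstacle.} The difficulty lies in this last implication. From connectedness of $Y^c$ alone one does not get an arc in $Y^c$ from $a$ to $b$, since $Y^c$ need not be open. The first step must therefore be upgraded to a version valid for merely connected complements. I would try to prove it in the form: if $\phi_{a,b}|_Y$ is essential, then there is a compact $K\subseteq Y$ on which it is already essential. I would get such a $K$ by exhausting $Y$ by compact subsets of $S^2\setminus\{a,b\}$ and using that the obstruction to a logarithm is a class in Čech $H^1$, which is continuous under inverse limits. Such a $K$ separates $a$ from $b$ in $S^2$ by Eilenberg, which yields a separation of $Y^c\subseteq K^c$ with $a$ and $b$ on different sides, contradicting connectedness. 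Making the compact-exhaustion step rigorous for a completely arbitrary $Y$ is where I expect the real work to be. If it resists, a fallback is to avoid $\phi_{a,b}$ and instead build, directly from $A$, $B$ and the tracks $t\mapsto H(x,t)$, an explicit separation of $Y^c$. In that construction, $x\in X\setminus Y$ is assigned to the side containing the nearby points of $X^c$, and continuity of $H$ is used to show the two resulting pieces are separated.
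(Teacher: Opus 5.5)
Your first two steps are sound, apart from a slip: to define $\phi_{a,b}$ you should identify $S^2$ with $\hat{\mathbb{C}}$ so that $a,b$ are finite (or take $b=\infty$ and $\phi=z-a$). The gap is in the third step, and it is not merely technical. You aim to show that $a,b\in Y^c$ with $Y^c$ connected forces $\phi_{a,b}|_Y$ to be nullhomotopic. This is false when $Y$ is not closed.

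Your proposed upgrade, that essentiality of $\phi_{a,b}|_Y$ is already witnessed on some compact $K\subseteq Y$, is false as well. Indeed, if $Y^c$ is connected, then for every compact $K\subseteq Y$ the points $a,b$ lie in one component of the open set $S^2\setminus K\supseteq Y^c$. Your first step then makes $\phi_{a,b}|_K$ inessential, so the upgrade is equivalent to the false claim.

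For a counterexample, take $S=\{(1-e^{-t})e^{it}: t\ge 1\}$, $Y=S\cup\{1\}$, $a=0$, $b=\infty$, so $\phi_{a,b}(z)=z$. Any continuous argument on the ray $S$ equals $t+\mathrm{const}$. The points $1-e^{-2\pi n}$ of $S$ tend to $1$ while their argument tends to $\infty$, so $z|_Y$ has no continuous logarithm. Yet $Y^c=(\mathbb{D}\setminus S)\cup(\partial\mathbb{D}\setminus\{1\})\cup(\hat{\mathbb{C}}\setminus\overline{\mathbb{D}})$ is connected. The spiral corridor $\mathbb{D}\setminus S$ and the exterior are each connected, and the punctured circle lies in the closure of both. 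Continuity of Čech cohomology concerns decreasing intersections of compacta, not exhaustions, so it cannot rescue this step. Your fallback via the tracks of $H$ is too vague to assess.

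The missing idea is never to pass to all of $Y$, but to push the compact separator from your second step through the retraction. Take $K\subseteq X$ compact with $\phi_{a,b}|_K$ essential. Then $\phi_{a,b}\circ H|_{K\times[0,1]}$ is a homotopy in $\mathbb{C}\setminus\{0\}$ from $\phi_{a,b}|_K$ to $\phi_{a,b}\circ r|_K$. The latter factors through the compact set $r(K)\subseteq Y$, and a logarithm on $r(K)$ would pull back to one for $\phi_{a,b}\circ r|_K$. Hence $\phi_{a,b}|_{r(K)}$ is essential. On the other hand, $Y^c\subseteq S^2\setminus r(K)$ is connected and contains $a,b$. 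So $a,b$ lie in one component of this open set and are joined by an arc in it, and your first step makes $\phi_{a,b}|_{r(K)}$ inessential, a contradiction. This proves the lemma for arbitrary $X,Y$, using only a map $r\colon X\to Y$ homotopic in $X$ to $\mathrm{id}_X$.

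By comparison, the paper's Appendix A treats only compact $K\subseteq L$. It uses $\widetilde H_0(S^n\setminus K)\cong H_1(S^n,S^n\setminus K)\cong\check H^{n-1}(K)$ together with homotopy invariance of Čech cohomology. In that compact case $Y^c$ is open, so your third step closes at once. Your maps into $\mathbb{C}\setminus\{0\}$ are then just the concrete form $\check H^1(K)\cong[K,S^1]$ of the same duality.
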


\emph{Proof of Theorem \ref{teo_principal}(b).} 
We now aim to show that the surface $ \mathcal{L}_\mathcal{P} $ is $\lambda$-linearly locally connected ($\lambda$-LLC). In this setting, the $\lambda$-LLC condition is equivalent to proving that the space is linearly locally contractible.

%Throughout this proof, any reference to a ball refers to its preimage under the projection map and to a conic point or {conic point of angle $ k\pi $} (with $ k \in \mathbb{N} $), refers to one of its $ k $ preimages at the boundary of $ P $.

Let $ B(a, r) $ denote the a ball in $ \mathcal{L}_{\mathcal{P}} $, centered at a point $ a $ with radius $ r $. If we choose $ \lambda = 4K $, where $ K $ is the constant defined in the proof of Theorem~\ref{teo_principal}(a), then the preimage of such a ball does not intersect two non-adjacent sides of the multipolygon before the boundary identifications are applied. 

To verify the $\lambda$-LLC$_1$ condition, note that $ \mathcal{L}_\mathcal{P} $ is a compact length space. Thus, for any two points $ x, y \in B(a, r) $, we can construct a continuum joining them by taking the union of geodesic segments from $ x $ to $ a $ and from $ y $ to $ a $.

%univ.thm

To prove the $\lambda$-LLC$_2$ condition, further analysis is required. Initially, consider the following: let $ B(a, r) $ be the preimage of a ball with center $ a $ and radius $ r $, and let $ x $ and $ y $ be points on the surface that lie outside $ B(a, r) $. If the geodesic joining the points $ x $ and $ y $ on the surface does not pass through the ball $ B(a, r/\lambda) $, then the continuum $ E $ can be chosen as the geodesic itself.

If the geodesic passes through the ball $ B(a, r/\lambda) $, since there exist paths connecting any two points on the surface, $ E $ can be taken as a path that avoids the ball. For example, a geodesic that avoids the ball can be constructed as follows: consider segments emanating from $ x $ and $ y $ that reach the closest points on the boundary of the ball, and then connect these new points using a path along the boundary of the ball. To ensure the existence of such a path, it is necessary that the complement of the ball is connected.

It is important to examine what happens to balls that do not intersect opposite sides. To establish this condition, Lemma \ref{defret1} will be required.

There are three cases to consider: 

\begin{enumerate}[1)]
    \item Balls centered at conic points.
    \item Balls centered at the accumulation point.
    \item Balls centered at regular points.
\end{enumerate}

For balls centered at either conic points or regular points, with radii small enough to exclude any other conic point, a deformation retraction to a point can be constructed. The procedure consists of first deforming the ball toward a side of the polygon via linear homotopies. Once a segment on the boundary is reached, a further deformation is performed along this segment, ultimately contracting the set to a point. In contrast, when a ball contains a conic point and intersects the sides of the multipolygon, a deformation retraction can still be constructed - this time onto a union of segments along the polygon’s boundary that intersect the ball.

For balls centered at the accumulation point, a deformation retraction onto a union of segments along the sides of the multipolygon can also be constructed using linear deformations.

In summary, we have established that, depending on the location of the ball, its deformation retract may be a point, a single segment, or a union of segments along the sides of the multipolygon.

As a consequence, the complement of any such ball is connected, which allows us to construct a continuum connecting any two points outside the ball via a path that entirely avoids its interior. Therefore, by Lemma~\ref{defret1} and Theorem~\ref{univ.thm}, the surface $ \mathcal{L}_\mathcal{P} $ is linearly locally connected, and hence, linearly locally contractible.

\qed

As a corollary of items (a) and (b) in Theorem~\ref{teo_principal}, we obtain the following fundamental result:  
the class of surfaces $ \mathcal{L}^* $ is quasisymmetrically equivalent to the standard 2-sphere.

This follows from the Bonk-Kleiner Theorem, which asserts that any compact, Ahlfors 2-regular, and linearly locally connected metric space homeomorphic to $ {S}^2 $ is quasisymmetrically equivalent to the standard 2-sphere.

%\textit{Remark.} It is possible to prove that any regular $ n $-gon is quasisymmetrically equivalent to the topological 2-sphere. The proof established for the square, proving the conditions of Ahlfors 2-regularity and linear local connectivity, extends inductively to any regular $ n $-gon.

%Basically, by multiplying the estimates by $ n $, and considering that balls with centers on other sides may appear, it is possible to establish Ahlfors 2-regularity. Furthermore, similar deformation retracts of balls can be constructed to verify the linearly locally connected condition.

\section{Tight horseshoe}

The \emph{tight horseshoe} is a transformation defined on a topological sphere that replicates the dynamics of Smale's classical horseshoe map. However, unlike Smale’s original construction, the tight horseshoe does not contain points with trivial dynamics. In this case, the non-wandering set of the transformation is the entire surface.
This characteristic renders the system minimal in a dynamical sense, as every point exhibits nontrivial behavior under iteration.

 In this section, we show that the tight horseshoe provides an example of a paper space that fails to satisfy the Ahlfors 2-regularity condition at a simple point. 
 Consequently, the Bonk–Kleiner theorem cannot be applied to conclude that the tight horseshoe sphere is quasisymmetrically equivalent to the standard 2-sphere.

Let $\{a_i\}_{i \in \mathbb{N}}$ be a sequence of positive real numbers with $a_0 = 1/2$ and $\sum_{i=1}^{\infty} a_i = 1/2$. Consider $P$ as a unit square and define $\hat{P} = \{\langle \alpha_i, \alpha_i' \rangle\}_{i \in \mathbb{Z}}$ according to the following construction - we proceed as in the standard Type~$ W $ identification, except that all segments $ \beta_i $ are omitted (i.e., $ b_i = 0 $ for all $ i $):

The segment $\alpha_0$ has length $a_0$ and starts at the upper-right vertex of $P$. The segment $\alpha_0'$ also has length $a_0$ and begins at the endpoint of $\alpha_0$, folding the top side of $P$ in half. Next, $\alpha_1$ has length $a_1$ and starts at the endpoint of $\alpha_0'$, followed by $\alpha_1'$ of the same length, starting at the endpoint of $\alpha_1$. This process continues, forming a sequence of folds from top to bottom that covers the left side of $P$, except for its lower vertex.

A similar process is applied to the right side of $P$. The segment $\alpha_{-1}'$ has length $a_0$ and ends at the upper-right vertex of $P$, while $\alpha_{-1}$, also of length $a_0$, starts at the initial point of $\alpha_{-1}'$, folding the right side of $P$ in half. Subsequently, $\alpha_{-2}'$ has length $a_1$ and ends at the initial point of $\alpha_{-1}$, while $\alpha_{-2}$, also of length $a_1$, starts at the initial point of $\alpha_{-2}'$. This pattern continues, forming folds from right to left that cover the lower side of $P$, except for its leftmost endpoint.

 \begin{figure}[!h]
           \centering
           \includegraphics[scale=1.1]{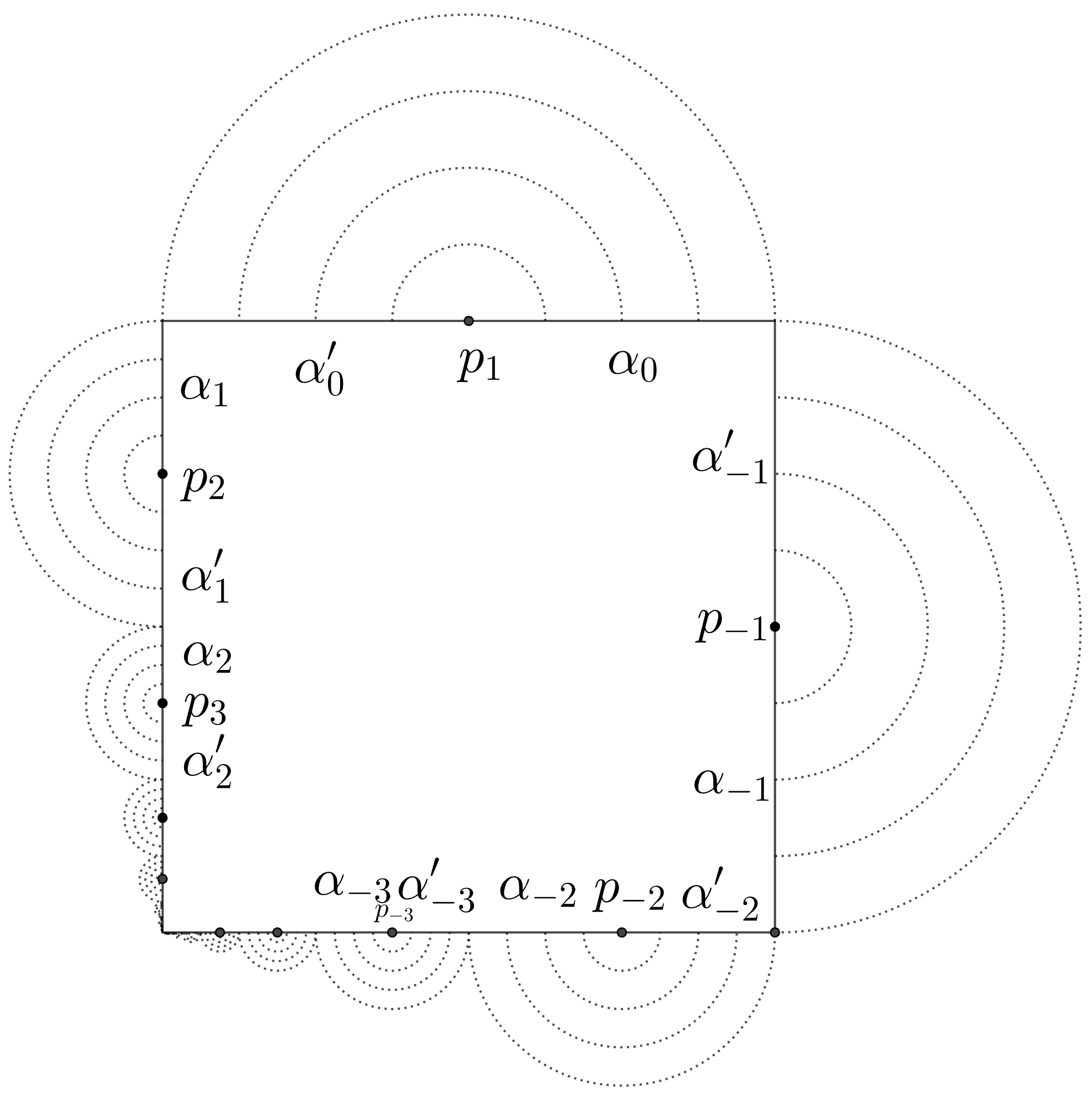}
           \caption{The square $P$ with all the identifications and fold points.}
           \label{identif_vert3}
       \end{figure}
 
The tight horseshoe is not Ahlfors 2-regular, and the issue arises at the accumulation point due to the fact that all endpoints of the folds are identified. Moreover, as the radius of the ball decreases, additional balls will appear at these points.

In this case, a ball centered at the accumulation point is given by  

\begin{align*}
    B(\tilde{p}_{\infty}, r) = & \ 4 \text{ one-quarter balls of radius } r \cup (2n - 2) \text{ semiballs of radius } r \ \cup \\ &\bigcup_{n+1}^{\infty} \text{ pieces of balls of radius } r,
\end{align*}
where $n$ is such that $\frac{1}{2^n} < r$.

Therefore,
 $$ \text{Area}(\Bar{B}(\tilde{p}_{\infty},r))= \left[(\pi-2) r^2 +\left(2\log \frac{1}{r}\right)\left(\frac{\pi}{2} r^2\right) + \text{ something}\right]. $$

Despite this, \cite{de2005extensions} and \cite{de2012paper} show, using different techniques, that the tight horseshoe sphere has a well-defined conformal structure and is therefore conformally equivalent to~$\overline{\mathbb{C}}$.

\section*{Appendix A}\label{appendix}

In this appendix, we discuss criteria for determining when an open subset $ U $ of a connected orientable manifold $ X $ is connected. Under these assumptions, it is standard that $ U $ is connected if and only if it is path-connected. The main reference for this section is Chapter 3 in \cite{hatcher}. Furthermore, the proofs of the following results were shown to me by Mario Bonk.

This topological property can be detected homologically via the singular homology group $ H_0(U) $, which encodes the number of path-connected components of $ U $. More precisely, $ H_0(U) $ is isomorphic to a direct sum $ \bigoplus_{i \in I} \mathbb{Z} $, where the index set $ I $ has one element for each path component. Thus, $ U $ is connected if and only if $ H_0(U) \cong \mathbb{Z} $.

It is often convenient to work instead with the reduced homology group $ \widetilde{H}_0(U) $, obtained by quotienting out one of the $ \mathbb{Z} $-summands. In this setting, $ U $ is connected if and only if $ \widetilde{H}_0(U) = 0 $. Since $ X $ is assumed connected, we also have $ \widetilde{H}_0(X) = 0 $.

Using the long exact sequence of the pair $ (X, U) $, we obtain:
$$
\cdots \to H_1(X) \to H_1(X,U) \to \widetilde{H}_0(U) \to \widetilde{H}_0(X) = 0.
$$
From exactness, it follows that the connecting map $ H_1(X,U) \to \widetilde{H}_0(U) $ is always surjective. Furthermore, if $ H_1(X) = 0 $ (e.g., when $ X $ is simply connected), then the map is an isomorphism. This yields the following criterion.

\begin{lemma}
\label{lemma:A1}
Suppose that $ H_1(X) = 0 $. Then $ U $ is connected if and only if $ {H_1(X,U) = 0}. $
\end{lemma}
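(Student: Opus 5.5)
The plan is to read the equivalence directly off the tail of the long exact sequence of the pair $(X,U)$ in reduced homology, as set up just before the statement. Throughout, $X$ is the connected orientable manifold of the appendix, which is in particular path-connected, and $U\subseteq X$ is open and nonempty. For such a $U$, connectedness and path-connectedness coincide, so $U$ is connected exactly when $\widetilde{H}_0(U)=0$.

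The first step is to write the relevant segment of the sequence:
$$
H_1(X)\longrightarrow H_1(X,U)\xrightarrow{\ \partial\ }\widetilde{H}_0(U)\longrightarrow \widetilde{H}_0(X).
$$
Since $X$ is path-connected, $\widetilde{H}_0(X)=0$. Exactness at $\widetilde{H}_0(U)$ then says that $\partial$ is surjective. The hypothesis $H_1(X)=0$ says that the image of $H_1(X)\to H_1(X,U)$ is zero. Exactness at $H_1(X,U)$ therefore gives $\ker\partial=0$. Hence $\partial$ is an isomorphism $H_1(X,U)\cong\widetilde{H}_0(U)$.

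The second step combines this isomorphism with the characterization above. We get that $U$ is connected if and only if $\widetilde{H}_0(U)=0$, which holds if and only if $H_1(X,U)=0$. This proves both directions at once.

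The only point needing care is the use of reduced homology for the pair. The long exact sequence in reduced homology is valid when $U\neq\emptyset$, which is the case here. If $U=\emptyset$, one should state a convention: the empty set is not considered connected, while $H_1(X,\emptyset)=H_1(X)=0$. I would therefore add the nonemptiness assumption explicitly. Apart from this, no step is a real obstacle; everything is standard exactness, following Chapter~2--3 of \cite{hatcher}.
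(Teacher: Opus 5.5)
Your argument is the same as the paper's: the reduced long exact sequence of the pair makes $\partial\colon H_1(X,U)\to\widetilde H_0(U)$ surjective because $\widetilde H_0(X)=0$ and injective because $H_1(X)=0$, and $\widetilde H_0(U)=0$ then characterizes connectedness of $U$. Your nonemptiness caveat is harmless, though under the paper's separation-based definition the empty set is actually connected, so the case $U=\emptyset$ holds trivially since $H_1(X,\emptyset)=H_1(X)=0$.
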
 

\begin{proof}
The map $ H_1(X,U) \to \widetilde{H}_0(U) $ is surjective in general, and it is an isomorphism when $ H_1(X) = 0 $. Since $ \widetilde{H}_0(U) = 0 $ if and only if $ U $ is connected, the claim follows.
\end{proof}

Now assume $ X $ is a compact connected orientable $ n $-manifold. By Alexander's duality, we have an isomorphism
$$
H_1(X,U) \cong \check{H}^{n-1}(X \setminus U),
$$
where $ \check{H}^{n-1} $ denotes Čech (or Alexander–Spanier) cohomology. This leads to the following.

\begin{lemma}
Suppose $ X $ is a compact connected orientable $ n $-manifold, $ n \geqslant 2 $, with $ H_{1}(X) = 0 $. Suppose $ K, L \subseteq X $ are compact sets that are homotopy equivalent. Then $ X \setminus K $ is connected if and only if $ X \setminus L $ is connected
\end{lemma}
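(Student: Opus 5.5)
The plan is to reduce the statement to the homological criterion of Lemma~\ref{lemma:A1}, combined with the Alexander duality isomorphism recorded just before the statement, and then use homotopy invariance of Čech cohomology on sufficiently nice compacta. Since $X$ is a compact connected orientable $n$-manifold with $H_1(X)=0$, for each compact set $C\subseteq X$ the complement $U=X\setminus C$ is open. By Lemma~\ref{lemma:A1}, $U$ is connected if and only if $H_1(X,U)=0$. By Alexander duality, this holds if and only if $\check{H}^{n-1}(C)=0$. Applying this to $C=K$ and to $C=L$, the statement becomes the claim
$$
\check{H}^{n-1}(K)\cong \check{H}^{n-1}(L),
$$
or at least that one group vanishes exactly when the other does.

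The key step is therefore to pass from the homotopy equivalence $K\simeq L$ to an isomorphism of Čech cohomology groups. For arbitrary compacta this is false: homotopy equivalence controls singular cohomology, while Čech cohomology can differ from it (the Warsaw circle is the standard example). I would handle this as follows.

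\emph{Main route.} Restrict to compacta that are ENRs (or locally contractible). For these, Čech and singular cohomology agree, and singular cohomology is a homotopy invariant. In the intended application (Lemma~\ref{defret1}) the sets involved are closed balls in a paper surface and finite unions of boundary segments, which are finite polyhedra or at least ANRs, so this restriction is harmless there.

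\emph{Stronger hypothesis if needed.} If the lemma must hold in the generality stated, I would strengthen the hypothesis to a deformation retraction $L\subseteq K$, which is the form Lemma~\ref{defret1} actually uses. Given a deformation retraction $H\colon K\times I\to K$ onto $L$, continuity of Čech cohomology lets me compare $\check{H}^*(K)$ with the direct limit over neighborhoods and construct the inverse isomorphism of the inclusion $L\hookrightarrow K$ directly. This uses that Alexander–Spanier cohomology is homotopy invariant for maps of paracompact spaces, which is a standard result (Spanier, Ch.~6).

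In fact the cleanest choice is to cite this theorem outright: Alexander–Spanier cohomology is a homotopy-invariant functor on paracompact Hausdorff spaces, and compact subsets of a manifold are paracompact. With that, a homotopy equivalence $K\simeq L$ induces $\check{H}^{n-1}(K)\cong\check{H}^{n-1}(L)$, which contradicts the Warsaw-circle worry above. The resolution is that the Warsaw circle is \emph{not} homotopy equivalent to $S^1$; it is only shape equivalent to it. So no ENR restriction is required, and the equivalence of connectedness of $X\setminus K$ and $X\setminus L$ follows in full generality. Lemma~\ref{defret1} is then the special case $X=S^2$, $n=2$, with the deformation retraction supplying the homotopy equivalence.

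The main obstacle I expect is precisely this homotopy-invariance step: confirming that the form of Alexander duality used, namely $H_1(X,U)\cong\check{H}^{n-1}(X\setminus U)$ with $X\setminus U$ compact, is stated in terms of Alexander–Spanier (equivalently Čech) cohomology, for which homotopy invariance on compact Hausdorff spaces is available. A secondary point is to check that the reduced-versus-unreduced indexing is harmless for $n\ge 2$. It is, because degree $n-1\ge 1$ is unaffected by reduction.
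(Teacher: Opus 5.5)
Your final argument is correct and is the paper's own proof: the chain $\widetilde{H}_0(X\setminus K)\cong H_1(X,X\setminus K)\cong \check{H}^{n-1}(K)\cong \check{H}^{n-1}(L)\cong H_1(X,X\setminus L)\cong \widetilde{H}_0(X\setminus L)$, obtained from Lemma~\ref{lemma:A1} and Alexander duality. The paper asserts the middle isomorphism without justification, and your appeal to homotopy invariance of Alexander--Spanier (\v{C}ech) cohomology on compact Hausdorff spaces is the right way to supply it; you can therefore delete the ENR restriction, the strengthened deformation-retract hypothesis, and your opening claim that this step fails for arbitrary compacta.
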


\begin{proof}
Suppose $ X \setminus K $ is non-empty and connected. Then:
$$
0 = \widetilde{H}_0(X \setminus K) \cong H_1(X, X \setminus K) \cong \check{H}^{n-1}(K)
\cong \check{H}^{n-1}(L) \cong H_1(X, X \setminus L) \cong \widetilde{H}_0(X \setminus L).
$$
Thus, $ X \setminus L $ is also connected.
\end{proof}

We note that the assumption $ H_1(X) = 0 $ is essential. For example, if $ X $ is a 2-torus and $ K, L \subset X $ are topological circles, the result fails if one of them bounds a disk while the other is homotopically non-trivial.

To refine the previous lemma, we now recall the notion of deformation retracts. Let $ K \subset X $ be a subset. We say that $ K $ is a deformation retract of $ X $ if there exists a homotopy $ H : X \times [0,1] \to X $ such that $ H_0 = \mathrm{id}_X $, $ H_1(X) \subset K $, and $ H_t(x) = x $ for all $ x \in K $ and $ t \in [0,1] $. In this case, $ X $ and $ K $ are homotopy equivalent.

\begin{lemma}
Let $ X $ be a topological space and suppose $ K \subseteq X $ is a deformation retract of $ X $. Then $ K $ and $ X $ are homotopy equivalent
\end{lemma}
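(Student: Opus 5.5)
The plan is to prove the statement straight from the definitions, writing the required homotopy equivalence explicitly in terms of the deformation $H\colon X\times[0,1]\to X$. Let $i\colon K\hookrightarrow X$ be the inclusion and let $r:=H_1\colon X\to K$. This map is well defined because $H_1(X)\subset K$, and it is continuous as a map into $K$ with the subspace topology, since $H$ is continuous and its image under $H_1$ lies in $K$. It then suffices to show that $r\circ i\simeq \mathrm{id}_K$ and $i\circ r\simeq \mathrm{id}_X$.

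For the first composition, the condition $H_t(x)=x$ for all $x\in K$ and all $t$ gives $r\circ i=H_1|_K=\mathrm{id}_K$ exactly. So this is an equality, not merely a homotopy; in fact only the value at $t=1$ is needed.

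For the second composition, $i\circ r=H_1$ viewed as a map $X\to X$. The map $H$ itself is then a homotopy from $H_0=\mathrm{id}_X$ to $H_1=i\circ r$. Hence $i\circ r\simeq \mathrm{id}_X$, and $i$ and $r$ are mutually inverse homotopy equivalences.

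No step is a genuine obstacle. The only point needing care is that $r$ is continuous as a map into $K$, which follows from the universal property of the subspace topology. This is what will later let the lemma be combined with the homotopy invariance of Čech cohomology in the preceding lemma, and so yield Lemma~\ref{defret1}.
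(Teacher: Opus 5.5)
Your proposal is correct and is essentially the paper's proof. The paper likewise takes $g=H_1$ viewed as a map into $K$ together with the inclusion $f\colon K\to X$, gets $g\circ f=\mathrm{id}_K$ from the fixing condition, and uses $H$ itself to obtain $f\circ g=H_1\simeq H_0=\mathrm{id}_X$; your remark that $H_1$ is continuous as a map into $K$ just makes explicit a point the paper leaves implicit.
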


\begin{proof}
Let $ H: X \times [0,1] \rightarrow X $ be a homotopy as in the definition of a deformation retract of $ X $ onto $ K $. Then the map $ H_{1}: X \rightarrow X $ takes only values in $ K $. So we can consider it as a map $ g: X \rightarrow K $. In particular, $ g(x) = H_{1}(x) $ for all $ x \in X $ and $ g(x) = H_{1}(x) = x $ for all $ x \in K \subseteq X $. Let $ f: K \rightarrow X $ be the inclusion map, that is, $ f(x) = x \in X $ for all $ x \in K $.

Then for $ x \in K $ we have $ (g \circ f)(x) = g(x) = H_{1}(x) = x $, and so $ g \circ f = \mathrm{id}_{K} $. On the other hand, $ f \circ g = H_{1} $ by definition of $ g $, and so $ f \circ g = H_{1} \simeq H_{0} = \mathrm{id}_{X} $. The statement follows.
\end{proof}

Applying these results to the case of spheres, we obtain the following.

\begin{theorem}
Let $ S^{n} $, $ n \geqslant 2 $, be the $ n $-dimensional sphere, and let $ K \subseteq L $ be compact subsets of $ S^{n} $. If $ K \subseteq L $ is a deformation retract of $ L $, then $ S^{n} \setminus L $ is connected if and only if $ S^{n} \setminus K $ is connected
\end{theorem}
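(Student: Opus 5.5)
The plan is to combine the two preceding lemmas of this appendix. I will first reduce the deformation-retract hypothesis to a homotopy equivalence. Then I will apply the Alexander-duality criterion with $X=S^n$, which is a compact connected orientable $n$-manifold with $H_1(S^n)=0$ because $n\geq 2$.

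Concretely, the steps in order are as follows.
\begin{enumerate}
\item Since $K$ is a deformation retract of $L$, the preceding lemma on deformation retracts says that the inclusion $K\hookrightarrow L$ is a homotopy equivalence. In particular, $K$ and $L$ are homotopy equivalent compact subsets of $S^n$.
\item The hypotheses of the previous lemma then hold for $X=S^n$ and the compact sets $K,L$. That lemma yields that $S^n\setminus K$ is connected if and only if $S^n\setminus L$ is connected.
\end{enumerate}
This second step runs through the chain of isomorphisms
\[
\widetilde{H}_0(S^n\setminus K)\cong H_1(S^n,S^n\setminus K)\cong \check{H}^{n-1}(K)\cong \check{H}^{n-1}(L)\cong H_1(S^n,S^n\setminus L)\cong \widetilde{H}_0(S^n\setminus L).
\]
The first and last isomorphisms come from Lemma~\ref{lemma:A1}, and the middle steps are Alexander duality. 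The lemma as written only argues one direction, but the chain is symmetric in $K$ and $L$, so the same argument gives the converse.

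The one point needing care, which I expect to be the main obstacle, is the middle isomorphism $\check{H}^{n-1}(K)\cong\check{H}^{n-1}(L)$. Čech cohomology is not in general a homotopy invariant of arbitrary compacta. Here, however, the homotopy equivalence comes from an honest deformation retraction. The retraction $r=H_1\colon L\to K$ and the inclusion $i\colon K\to L$ satisfy $r\circ i=\mathrm{id}_K$ and $i\circ r\simeq \mathrm{id}_L$. Alexander–Spanier (equivalently Čech) cohomology on compact Hausdorff spaces is homotopy invariant: it satisfies the homotopy axiom, since it agrees with sheaf cohomology with constant coefficients on paracompact spaces, and sheaf cohomology has this property. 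Therefore $i^*$ and $r^*$ are mutually inverse, which gives the needed isomorphism. I would state this invariance explicitly, citing \cite{hatcher} or a standard reference on Alexander–Spanier cohomology, so the previous lemma applies without gaps.

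Finally, I would dispose of degenerate cases separately. If $L=S^n$, then $K$ is a deformation retract of $S^n$ and is homotopy equivalent to it. This forces $K=S^n$, because a proper compact subset of $S^n$ has $\check{H}^n=0$ while $\check{H}^n(S^n)=\mathbb{Z}$. Both complements are then empty, and the statement holds under the convention used in the previous lemma. Otherwise the complements are nonempty open sets and the argument above applies directly.
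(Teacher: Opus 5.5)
Your proposal is correct and follows the paper's own argument: you pass from the deformation retraction to a homotopy equivalence and apply the preceding Alexander-duality lemma with $X=S^n$, using $H_1(S^n)=0$, and your handling of the converse direction and of the degenerate case $L=S^n$ fills in points the paper leaves implicit. One small correction to your commentary: Čech (Alexander--Spanier) cohomology \emph{is} invariant under homotopy equivalence of compact Hausdorff spaces. What fails for general compacta is agreement with singular cohomology and invariance under weak equivalences, as the Warsaw circle shows. So your homotopy-axiom argument works for any homotopy equivalence, and it is not the deformation retraction that makes it go through (a reference such as Spanier's text fits this point better than Hatcher).
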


\begin{proof}
This follows from our previous considerations and the fact that $ S^{n} $ is simply connected for $ n \geqslant 2 $, and so $ H_1(S^n) = 0 $.
\end{proof}

\subsection*{Acknowledgments}

The author would like to acknowledge the financial support received from FAPESP (Grant numbers 2020/06978-6 and 2022/05984-8) and CAPES (Process number 88887.103459/2025-00). The author also thanks André de Carvalho and Mario Bonk for their helpful and insightful comments.

\bibliography{bib_luciana}

\bibliographystyle{alpha}
%\bibliography{references}  %%% Uncomment this line and comment out the ``thebibliography'' section below to use the external .bib file (using bibtex) .

%%% Uncomment this section and comment out the \bibliography{references} line above to use inline references.
% \begin{thebibliography}{1}

% 	\bibitem{kour2014real}
% 	George Kour and Raid Saabne.
% 	\newblock Real-time segmentation of on-line handwritten arabic script.
% 	\newblock In {\em Frontiers in Handwriting Recognition (ICFHR), 2014 14th
% 			International Conference on}, pages 417--422. IEEE, 2014.

% 	\bibitem{kour2014fast}
% 	George Kour and Raid Saabne.
% 	\newblock Fast classification of handwritten on-line arabic characters.
% 	\newblock In {\em Soft Computing and Pattern Recognition (SoCPaR), 2014 6th
% 			International Conference of}, pages 312--318. IEEE, 2014.

% 	\bibitem{hadash2018estimate}
% 	Guy Hadash, Einat Kermany, Boaz Carmeli, Ofer Lavi, George Kour, and Alon
% 	Jacovi.
% 	\newblock Estimate and replace: A novel approach to integrating deep neural
% 	networks with existing applications.
% 	\newblock {\em arXiv preprint arXiv:1804.09028}, 2018.

% \end{thebibliography}

\end{document}